\theoremstyle{plain}
\newtheorem{theorem}{Theorem}[section]
\newtheorem{lemma}[theorem]{Lemma}
\theoremstyle{definition}
\newtheorem{definition}[theorem]{Definition}
\newtheorem{remark}[theorem]{Remark}
\newtheorem{example}[theorem]{Example}
\numberwithin{equation}{section}
\newcommand\fantome[1]{}
\DeclareMathOperator{\Log}{Log}
\begin{document}

\author{Bruno Angl\`es}

\address{
Normandie Université, 
Université de Caen Normandie - CNRS, 
Laboratoire de Mathématiques Nicolas Oresme (LMNO), UMR 6139,  
14000 Caen, France.
}
\email{bruno.angles@unicaen.fr}
\title{On the formal  Carlitz module}

\date{\today}


\setcounter{tocdepth}{1}

\parskip 2pt

\begin{abstract}
Let $L/\mathbb F_q(\theta)$ be a finite extension, and let $P$ be a monic irreducible polynomial in $\mathbb F_q[\theta].$ We show that the value at $1$ of the Carlitz-Goss $P$-adic zeta-function attached to $L/K$   is intimately  connected to the module of Taelman's units  (for the Carlitz module)  associated to finite extension  $L.$\end{abstract}

\subjclass[2010]{11G09, 11M38, 11R58}

\keywords{Carlitz module, Carlitz-Goss zeta function, class formula}

\maketitle

\tableofcontents
\section{Introduction}
Let $p$ be a prime number and let $F$ be a totally real number field. Let $\zeta_{p,F}$ be the $p$-adic Dedekind zeta function associated to $F.$ In 1988 P. Colmez proved the following remarkable $p$-adic analytic class number formula (\cite{COL}) :
$$ \lim_{s\rightarrow 1}(s-1) \zeta_{p,F}(s)= h_F \prod_{\mathfrak P\mid p }(1-\frac{1}{\mid \frac{O_F}{\mathfrak P}\mid})\,  \frac{2^{[F:\mathbb Q]-1}R_{p,F} }{ \sqrt{D_F}},$$
 where $h_F$ is the ideal class number of the ring of integers $O_F$ of $F,$ $\mathfrak P$ runs through the maximal ideals of $O_F$ above $p,$ $R_{p,F}$ is the $p$-adic regulator of $F,$ $D_F$  is the discriminant of $F.$ Our aim in these notes   is to show how a slight variation of Taelman's class formula (\cite{TAE2})  for the Carlitz module  enable us to prove an analogous formula for "real" extensions of $K:=\mathbb F_q(\theta).$\par
 
Let $A= \mathbb F_q[\theta], K=\mathbb F_q(\theta), K_\infty=\mathbb F_q((\frac{1}{\theta})).$ Let $L/ K$ be a finite extension, let $O_L$ be the integral closure of $A$ in $L,$ and let $L_\infty=L\otimes_K K_\infty.$ Lenny Taelman observed (see \cite{TAE1})  that the Carlitz exponential ($\exp_C$)  induces an exact sequence of $A$-modules :
$$0\rightarrow \frac{L_\infty}{\mathcal U(O_L)} \rightarrow \frac{L_\infty}{O_L}\rightarrow H(O_L)\rightarrow 0,$$
where $\mathcal U(O_L)=\{ x\in L_\infty, \exp_C(x)\in O_L\}$  is a  $A$-lattice in $L_\infty$ (i. e.  this $A$-module is free of rank $[L:K]$ and contains a $K_\infty$-basis of  $L_\infty$),  and $H(O_L)$ is a finite $A$-module, called Taelman's class module, and  this latter module can be viewed as an analogue of the ideal class group  of a number field. D. Goss proved  that the following infinite sum converges in $K_\infty $ (see paragraph \ref{section1}) :
$$\zeta_{O_L}(1):=\sum_{m\geq 0}\, \, \sum_{I,\, \dim_{\mathbb F_q}\frac{O_L}{I}=m}\, \, \frac{1}{[\frac{O_L}{I}]_A},$$ 
 where $I$ runs through the non-zero ideals of $O_L,$ and for a finite $A$-module $M,$ we denote by $[M]_A$ the monic generator of the Fitting ideal of the $A$-module $M.$ 
Taelman proved  the following  remarkable $\infty$-adic class formula  (\cite{TAE2}):
 $$\zeta_{O_L}(1)= [H(O_L)]_A\,  R_{\infty,L} ,$$
 where $R_{\infty, L}$ is a "regulator" associated to the $A$-lattice $\mathcal U(O_L).$  With the help of the notion of "$z$-deformation" of Anderson mlodules, Taelman's class formula has been generalized  to the case of "general" $A$ and for certain Anderson modules (\cite{ANDT2}).\par

Let $P$ be a monic irreducible polynomial in $A.$  We suppose that $L/ K$  is "real" (i.e. for each place $v$ of $L$ above $\infty,$ $\widetilde{\pi}$ is not contained in the $v$-adic completion of $L,$ where $\widetilde{\pi}$ is a period of the Carlitz module).  Let's consider the $A$-module of Taelman's units  :
$$U(O_L)=\exp_C(\mathcal U(O_L))= \exp_C(L_\infty)\cap O_L.$$
Since $L/K$ is real, $U(O_L)$ is a free $A$-module (via the Carlitz module) of rank $[L:K].$  Therefore, we can defined a $P$-adic regulator associated to $U(O_L)$ (see paragraph \ref{P_adic class}) :
$$R_{P,L}\in K_P,$$
where $K_P$ is the $P$-adic completion of $K.$ D. Goss  also proved  that the following infinite sum converges in $K_P $ (see paragraph \ref{section2}) :
$$\zeta_{P,O_L}(1):=\sum_{m\geq 0}\, \, \sum_{I,\, \dim_{\mathbb F_q}\frac{O_L}{I}=m}\, \, \frac{1}{[\frac{O_L}{I}]_A},$$ 
 where $I$ runs through the ideals of $O_L$ prime to $PO_L.$  Then, with the help of the theory of the $z$-deformation of the Carlitz module (see paragraph \ref{section3}),  we prove  the following $P$-adic class formula  (Theorem \ref{Theorem4}):
 $$\zeta_{P,O_L}(1)= [H(O_L)]_A\,  \prod_{\frak P\mid P} (1-\frac{1}{[\frac{O_L}{\frak P}]})\, R_{P,L} .$$
 The   Caen mathematician A. Lucas recently informed the author  that he is able to  generalize the above result  to the case of $\mathbb F_q[\theta]$-Anderson-modules. We refer the interested reader to the forthcoming work of Lucas.\par
 ${}$\par
The author thanks the A.N.R. project  PadLeFAn for its financial support; the author also thanks the referee for fruitful comments and suggestions.\par

\section{Notation}\label{section0}
 Let $\mathbb F_q$ be a finite field having $q$ elements and let $\theta$ be an indeterminate over $\mathbb F_q.$ Let $A=\mathbb F_q[\theta],$ $A_+=\{ a\in A, \, a {\rm \, monic}\, \}, $and for $n\in \mathbb N, $ $A_{+,n}=\{ a\in A_+,  \deg_\theta a=n\} .$\par
  Let $K=\mathbb F_q(\theta),$ $K_\infty=\mathbb F_q((\frac{1}{\theta})),$ and let  $\bar{K}_\infty$ be an algebraic closure of $K_\infty.$  Let $\bar K$ be the  algebraic closure of $K$ in $\bar{K}_\infty.$ We fix $^{q-1}\sqrt{-\theta}$ a $(q-1)$th root of $-\theta$ in $\bar{K}_\infty,$ and we set :
  $$\widetilde{\pi} = \theta \, ^{q-1}\sqrt{-\theta}\prod_{i\geq 1} (1-\theta^{1-q^i})^{-1}\in \bar{K}_\infty.$$
  The element $\widetilde{\pi}$ is called the Carlitz period  since it generates the $A$-module of the kernel of the Carlitz exponential (see \cite{GOS}, chapter 3).
 Finally let $z$ be an indeterminate over $K_\infty.$ \par
Let $L/K$ be a finite extension, $L\subset \bar{K}.$ We denote by $S_\infty(L)$ the set of places of $L$ above $\infty$ ($\infty$ is the unique place of $K$ which is a pole of $\theta$). Let $v\in S_\infty(L),$ then there exists a unique  $K$-embedding $\iota_v: L\rightarrow { \bar K}_\infty $ corresponding to the place $v,$ and we denote by $L_v$ the compositum of $K_\infty $ and $\iota_v(L)$ in ${\bar K}_\infty.$  We have an isomorphism of $K_\infty$-algebras :
$$L_\infty :=L\otimes_KK_\infty \simeq \prod_{v\in S_\infty(L)}L_v.$$
We will identify these two $K_\infty$-algebras and we view $L$ as a sub-$K$-algebra of $L_\infty$ via the diagonal embedding.
The extension $L/K$ is called  "real" if $\widetilde {\pi} \not \in L_v, \forall v\in S_\infty(L).$ Let $O_L$ be the integral closure of $A$ in $L.$ If $I$ is a non-zero ideal of $O_L,$ we denote by $n_{L/K}(I) \in A_+$ the monic polynomial which generates  the Fitting ideal of the finite $A$-module $\frac{O_L}{I}.$ 

\section{Carlitz-Goss $\infty$-adic Zeta values}\label{section1}

Let $L/K$ be a finite extension. For $m\in \mathbb N,$ we set :
$$J_m(O_L)=\{I\, {\rm  ideal\, of \,} O_L,I\not =\{0\},  \deg_\theta n_{L/K}(I)=m\}.$$
For example, if $L=K,$ then $O_L=A,$ and $J_m(A)= \{ aA, \, a\in A_{+,m}\}.$\par
For all $n\in \mathbb Z,$ we set :
$$Z_{O_L}(n;z)= \sum_{m\geq 0}(\sum_{I\in J_m(O_L)}n_{L/K}(I)^{-n})z^m \in K[[z]].$$
For example, if $L=K,$ then $O_L=A,$ and we have :
$$Z_A(n;z)=\sum_{m\geq 0}\sum_{a\in A_{+,m}}\frac{1}{a^n} z^m.$$
These power series were introduced by David Goss (see for example \cite{GOS}, chapter 8, paragraph 8.5). By \cite{GOS}, Theorem 8.9.2, we have that : $\forall \alpha \in \bar{K}_\infty, Z_{O_L}(n;z)\mid_{z=\alpha}$ converges in $\bar{K}_\infty.$ Furthermore :
\begin{lemma}\label{LemmaPol} Let  $n\in \mathbb Z, n\leq 0.$ We denote by $\ell_q(-n)$ the sum of the digits of $-n$ in base $q.$  Then $ Z_{O_L}(n;z)\in A[z], $ and  :
$$\deg_zZ_{O_L}(n;z) \leq [L:K] (2g_L+1+\ell_q(-n))-1,$$
where $g_L$ is the genus of $L.$
\end{lemma}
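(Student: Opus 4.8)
The starting point is the factorization of the Dirichlet–Goss series into an Euler product. Over $O_L$ one has
$$Z_{O_L}(n;z)=\prod_{\mathfrak P}\bigl(1-n_{L/K}(\mathfrak P)^{-n}z^{\deg_\theta n_{L/K}(\mathfrak P)}\bigr)^{-1},$$
the product running over the nonzero primes of $O_L$; this is just unique factorization of ideals, together with multiplicativity of $n_{L/K}$, and it makes sense formally in $A[[z]]$ (or after localizing, in $K[[z]]$). For $n\le 0$ the polynomial $n_{L/K}(\mathfrak P)^{-n}$ lies in $A$, so each Euler factor is a formal power series with coefficients in $A$. The first reduction is therefore purely formal: I would like to interpret $Z_{O_L}(n;z)$ as a "zeta function of the variety $\Spec O_L$ over $\bF_q$" so that the Euler product collapses to something rational, and then bound its degree.

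The cleanest way to get a handle on the degree is to pass to the curve. Let $X$ be the smooth projective model of $L$ over $\bF_q$, so that $\Spec O_L=X\setminus S_\infty(L)$. The usual zeta function $Z_X(u)=\prod_{x\in |X|}(1-u^{\deg x})^{-1}=\dfrac{P_X(u)}{(1-u)(1-q u)}$ with $P_X\in\bZ[u]$ of degree $2g_L$. The series $Z_{O_L}(n;z)$ differs from a specialization of this because the "size" attached to a prime $\mathfrak P$ is $n_{L/K}(\mathfrak P)^{-n}\in A$, not a power of $q$; but $n_{L/K}(\mathfrak P)$ is, up to the relative degree, essentially $P^{f}$ where $\mathfrak P\mid P$. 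Concretely I would write $n_{L/K}(\mathfrak P)=$ (a monic polynomial of degree $\deg_\theta n_{L/K}(\mathfrak P)=:d_{\mathfrak P}$), so $n_{L/K}(\mathfrak P)^{-n}$ is a monic polynomial of degree $(-n)d_{\mathfrak P}$, and the Euler factor is
$$\bigl(1-n_{L/K}(\mathfrak P)^{-n}z^{d_{\mathfrak P}}\bigr)^{-1}=\sum_{k\ge 0}n_{L/K}(\mathfrak P)^{-nk}z^{k d_{\mathfrak P}}.$$
Grouping primes by $d_{\mathfrak P}$ and invoking the rationality of $Z_X$ I expect $Z_{O_L}(n;z)$ to be a rational function in $z$ whose numerator and denominator have controlled $z$-degree; the missing places at $\infty$ only remove finitely many Euler factors, so they contribute a polynomial factor of $z$-degree $\le[L:K]$ (one unit-root factor per place above $\infty$, each of degree at most $[L:K]$ after accounting for residue degrees). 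The key quantitative input is that for $n\le 0$, $Z_{O_L}(n;z)$ is actually a \emph{polynomial}, not just rational — this is where the condition $n\le 0$ (so that the "denominator" $1-qu$–type factors become genuine polynomial factors after evaluating the $A$-valued "Frobenius eigenvalues", rather than true poles) must be used.

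For the degree bound I would argue as follows. The coefficient of $z^m$ in $Z_{O_L}(n;z)$ is $\sum_{I\in J_m(O_L)}n_{L/K}(I)^{-n}$, a sum over effective divisors $D$ of $X$ supported away from $\infty$ with $\deg n_{L/K}(\mathcal O_D)=m$. By Riemann–Roch, for $m$ large the ideals $I$ with $\dim_{\bF_q}(O_L/I)=m$ come in families parametrized by $\Pic$, and the relevant "power sums" $\sum n_{L/K}(I)^{-n}$ vanish once $m$ exceeds roughly $[L:K](2g_L-1)$ plus the extra room $[L:K]\,\ell_q(-n)$ needed because raising to the $-n$ power in $A$, in base $q$, spreads out digits — this is exactly the classical vanishing phenomenon for power sums $\sum_{a\in A_{+,d}}a^{-n}=0$ for $d$ large (Carlitz), whose sharp range is governed by $\ell_q(-n)$. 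So the heart of the matter is: (i) a clean Euler-product/rationality argument reducing to the curve, and (ii) a Riemann–Roch plus digit-counting argument giving the vanishing of high-degree coefficients with the stated bound $[L:K](2g_L+1+\ell_q(-n))-1$.

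\textbf{Main obstacle.} The delicate point is tracking the dependence on $\ell_q(-n)$ and on the places at infinity simultaneously. The vanishing of $\sum_{I}n_{L/K}(I)^{-n}$ for large $m$ is not immediate from rationality alone: one must control how the exponent $-n$, written in base $q$, interacts with the $\bF_q$-linear structure, and the relative degrees $f_v$ at the infinite places enter the count of how many Euler factors are removed. I would expect to handle this by the standard device of interpolating at the "$\infty$-adic" place: writing $n_{L/K}(I)^{-n}$ via the $\bF_q$-linearity of $a\mapsto a^{q^j}$ and expanding $(-n)$ in base $q$, so that $\sum_I n_{L/K}(I)^{-n}$ becomes a sum of products of coefficients of $Z_X$-type series, each of which vanishes beyond degree $2g_L-1$; the total degree then adds up to the claimed bound, with the $[L:K]$ factor coming from $\deg n_{L/K}$ versus $\deg$ on $X$ and the extra $+1$ absorbing the infinite places. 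Proving the sharp constant rather than an $O(\cdot)$ bound is the part requiring care.
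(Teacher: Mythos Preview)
Your Euler-product/rationality route is a dead end for the polynomiality assertion. The classical $Z_X(u)$ has genuine poles at $u=1$ and $u=1/q$, and replacing the weight $q^{\deg}$ by $n_{L/K}(\mathfrak P)^{-n}\in A$ does not make those poles disappear by any formal mechanism; removing the Euler factors at the infinite places only multiplies by a polynomial and does not cancel a pole either. You seem to sense this (``this is where the condition $n\le 0$\ldots must be used'') but never say what the mechanism is. There isn't one coming from rationality alone, so the first half of your plan should be dropped.

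The second half of your plan --- Riemann--Roch, families parametrized by $\Pic$, and the base-$q$ digit expansion of $-n$ --- is in fact the paper's approach, but the paper makes the two steps you leave vague completely explicit. First, it fixes representatives $I_1,\ldots,I_h$ of the ideal classes of $O_L$ and writes the coefficient $U_d=\sum_{I\in J_d}n_{L/K}(I)^{-n}$ as $\sum_i n_{L/K}(I_i)^{-n}\sum_{aO_L\in S_{d,i}} n_{L/K}(aO_L)^{-n}$, so everything is reduced to sums over \emph{principal} ideals in a fixed class. Second, rather than arguing directly with $a^{-n}$, it introduces $t=\ell_q(-n)$ independent variables $X_1,\ldots,X_t$ and the homomorphisms $\rho_j:\theta\mapsto X_j$, and invokes a multivariable vanishing lemma (Lemma~3.2 of \cite{ANDT}) asserting $\sum_{aO_L\in S_{d,i}}\rho_1(n_{L/K}(aO_L))\cdots\rho_t(n_{L/K}(aO_L))=0$ once $d\ge [L:K](2g_L+1+t)$; specializing $X_j=\theta^{q^{n_j}}$ with $-n=\sum_j q^{n_j}$ recovers $n_{L/K}(aO_L)^{-n}$ and gives $U_d=0$. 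This multivariable-then-specialize trick is exactly what makes the bound sharp in $\ell_q(-n)$ rather than, say, in $-n$ itself, and it is the step your sketch (``sum of products of coefficients of $Z_X$-type series'') gestures at but does not carry out.
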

\begin{proof}
We give a proof of  this  crucial Lemma. We set :
$$N=[L:K](2g_L+1+\ell_q(-n)),$$ 
where $g_L$ is the genus of $L.$
Let $h$ be the number of ideal classes of $O_L.$  Let $I_1, \ldots, I_h \subset O_L$ be a system of representatives of ideals classes of $O_L.$ For $d\geq 0,$ lets' set :
$$U_d= \sum_{I\in J_d(O_L)}n_{L/K}(I)^{-n}\in A.$$
Let's set :
$$\forall i= 1, \ldots, h, S_{d,i}=\{ aO_L, a\in L^\times, aI_i\subset O_L, \deg_\theta(n_{L/K}(aI_i))=d\}.$$
 Let  $t\in \mathbb N\setminus\{0\},$ let $X_1, \ldots, X_t$ be $t$ indeterminates over $\mathbb F_q.$   Let $\rho_i : A\rightarrow \mathbb F_q[X_1, \ldots, X_t]$ be morphism of $\mathbb F_q$-algebras  such that $\rho_i(\theta) =X_i,$ for  $i=1, \ldots, t.$ By \cite{ANDT}, Lemma 3.2,  for $d\geq  [L:K] (2g_L+1+t),$  we have :
 $$\forall i=1, \ldots, h, \sum_{aO_L\in S_{d,i}}\rho_1(n_{L/K}(aO_L))\cdots \rho_t(\rho_1(n_{L/K}(aO_L))=0.$$
Now let's write $-n$ in base $q$ : $-n= q^{n_1}+\cdots +q^{n_{\ell_q(-n)}}, n_j \in \mathbb N.$ If we set $t=\ell_q(-n),$ and specialize $X_i$ in $\theta^{q^{n_i}},$ we get :
$$\forall d\geq N, \forall i=1, \ldots, h, \sum_{aO_L\in S_{d,i}}n_{L/K}(aO_L)^{-n}=0.$$
Let's observe that :
$$U_d= \sum_{i=1}^h n_{L/K}(I_i)^{-n}(\sum_{aO_L\in S_{d,i}}n_{L/K}(aO_L)^{-n}).$$
Thus for $d\geq N,$ $U_d=0.$
 We can therefore write :
 $$ Z_{O_L}(n;z)=\sum_{d\geq 0} U_dz^d \in A[z].$$
 \end{proof}
\begin{definition}\label{infinyzeta} The Carlitz-Goss $\infty-$adic zeta values are defined as follows  :
$$\forall n\in \mathbb Z, \zeta_{O_L}(n)=  Z_{O_L}(n;z)\mid_{z=1} \in K_\infty.$$
\end{definition}
\begin{remark}\label{remarkinfiny} Let's observe that : $$\forall n\geq 1, \zeta_{O_L}(n)\in 1+\frac{1}{\theta} \mathbb F_q[[\frac{1}{\theta}]],$$
$$\forall n\leq 0, \zeta_{O_L}(n) \in A.$$
Furthermore, by \cite{THA}, Theorem 5, we have :
$$\forall n\leq -1, n\equiv 0\pmod{q-1}, \zeta_{O_L}(n)=0.$$ 
\end{remark}
\section{Carlitz-Goss $P$-adic Zeta values}\label{section2}
Let $P$ be a prime of $A$ (i. e. $P$ is a monic irreducible polynomial in $A$), and let $K_P$ be the $P$-adic completion of $K.$ We fix $\bar{K}_P$ an algebraic closure of $K_P.$ Let $A_P$ be the valuation ring of $K_P.$ If $\mathbb F_P$ denotes the residue field of $A_P$ then $A_P=\mathbb F_P[[P]].$ \par
Let  $L/K$ be a finite extension. 
 We set :
 $$\forall n\in \mathbb Z, Z_{P,O_L}(n;z)= \sum_{d\in \mathbb N}U_{P,d}(n)z^d ,$$
 where : 
$$\forall d\in \mathbb N, \forall n\in \mathbb Z, U_{P,d}(n)= \sum_{I,\, \deg_\theta n_{L/K}(I)=d}n_{L/K}(I)^{-n}\in K,$$
where $I$ runs through the ideals of $O_L$ prime to $PO_L$ and such that $d= \dim_{\mathbb F_q}\frac{O_L}{I}= \deg_\theta n_{L/K}(I).$
Let's observe that :
\begin{lemma}\label{Lemma1} Let $n\leq 0$ be an integer. Then  $Z_{P,O_L}(n;z)\in A[z],$ and :
$$\deg_zZ_{P,O_L}(n; z) \leq  [L:K](2g_L+1+\ell_q(-n))+\sum_{\mathfrak P\mid PO_L}\deg_\theta n_{L/K}(\mathfrak P)-1,$$ 
where $\ell_q(-n)$ is the sum of digits of $n$ in base $q.$
\end{lemma}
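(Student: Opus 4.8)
The plan is to reduce the $P$-adic statement (Lemma \ref{Lemma1}) to the $\infty$-adic one (Lemma \ref{LemmaPol}) by an Euler-factor/inclusion-exclusion argument. First I would observe the multiplicative relationship between $Z_{P,O_L}(n;z)$ and $Z_{O_L}(n;z)$: since the ideals counted in the former are exactly those prime to $PO_L$, one factors out the local contributions at the primes $\mathfrak P \mid PO_L$. Concretely, I would write
$$Z_{O_L}(n;z) = Z_{P,O_L}(n;z)\cdot \prod_{\mathfrak P\mid PO_L} \frac{1}{1 - n_{L/K}(\mathfrak P)^{-n} z^{\deg_\theta n_{L/K}(\mathfrak P)}},$$
which follows from unique factorization of ideals in $O_L$ and the fact that $n_{L/K}$ is multiplicative on coprime ideals together with $\deg_\theta n_{L/K}(IJ) = \deg_\theta n_{L/K}(I) + \deg_\theta n_{L/K}(J)$ (so the $z$-grading is additive). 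Equivalently, and more useful for the degree bound, I would invert one prime at a time: if $\mathfrak P_1,\dots,\mathfrak P_r$ are the primes above $P$ with $d_j = \deg_\theta n_{L/K}(\mathfrak P_j)$, then
$$Z_{P,O_L}(n;z) = Z_{O_L}(n;z)\cdot \prod_{j=1}^{r} \bigl(1 - n_{L/K}(\mathfrak P_j)^{-n}\, z^{d_j}\bigr).$$

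Next I would deduce the two conclusions of the lemma from this identity. Since $n\le 0$, each $n_{L/K}(\mathfrak P_j)^{-n} \in A$, so each factor $1 - n_{L/K}(\mathfrak P_j)^{-n} z^{d_j}$ lies in $A[z]$; by Lemma \ref{LemmaPol}, $Z_{O_L}(n;z)\in A[z]$; hence the product $Z_{P,O_L}(n;z)\in A[z]$, which is the first assertion. For the degree bound, I would add degrees in $z$: by Lemma \ref{LemmaPol}, $\deg_z Z_{O_L}(n;z) \le [L:K](2g_L+1+\ell_q(-n))-1$, and each extra factor contributes at most $d_j = \deg_\theta n_{L/K}(\mathfrak P_j)$ to the $z$-degree, so
$$\deg_z Z_{P,O_L}(n;z) \le [L:K](2g_L+1+\ell_q(-n)) - 1 + \sum_{\mathfrak P\mid PO_L}\deg_\theta n_{L/K}(\mathfrak P),$$
which is exactly the claimed bound.

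The only genuine point requiring care — and the step I expect to be the main (minor) obstacle — is justifying the product identity rigorously at the level of formal power series in $z$ with the prescribed grading: one must check that the coefficient $U_{P,d}(n)$ on the left matches the coefficient obtained from expanding the right-hand side, which amounts to the standard bijection (via unique factorization into $\mathfrak P_j$-parts and a coprime-to-$P$ part) between ideals of $O_L$ and tuples $(I', \mathfrak P_1^{a_1}\cdots\mathfrak P_r^{a_r})$ with $I'$ prime to $PO_L$, all compatibly graded by $\deg_\theta n_{L/K}$. Since this is a convergent/polynomial identity in $A[[z]]$ (indeed in $A[z]$ after Lemma \ref{LemmaPol}), there are no convergence subtleties: one can argue purely formally coefficient-by-coefficient. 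I would also remark that the $n$-dependence in $U_{P,d}(n)$ and $n_{L/K}(\mathfrak P)^{-n}$ is harmless because $n\le 0$ makes every quantity a genuine element of $A$, so the whole computation takes place in the polynomial ring $A[z]$.
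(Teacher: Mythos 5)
Your proof follows exactly the paper's route: the key identity $Z_{P,O_L}(n;z) = Z_{O_L}(n;z)\prod_{\mathfrak P\mid PO_L}\bigl(1 - n_{L/K}(\mathfrak P)^{-n} z^{\deg_\theta n_{L/K}(\mathfrak P)}\bigr)$ combined with Lemma \ref{LemmaPol} is precisely what the paper uses, and the degree count is the same. You simply spell out the Euler-factor justification (unique factorization of ideals and additivity of the grading) that the paper leaves implicit.
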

\begin{proof} By Lemma \ref{LemmaPol}, we have : $ \deg_z Z_{O_L}(n;z)\leq [L:K](2g_L+1+\ell_q(-n))-1.$ We have  :
$$ Z_{P,O_L}(n;z)= \prod_{\mathfrak P \mid PO_L}(1-n_{L/K}(\mathfrak P)^{-n}z^{\deg_\theta (n_{L/K}({\mathfrak P})})Z_{O_L}(n;z) .$$
 The Lemma follows.
\end{proof}

\begin{lemma}\label{Lemma2} Let $n\geq 1$ be an integer, and let $s\in \mathbb N$ such that  $n\leq q^s-1.$   Let $d\geq [L:K](2g_L+1+(s+1+\deg_\theta P)(q-1))+\sum_{\mathfrak P\mid PO_L}\deg_\theta n_{L/K}(\mathfrak P).$ Then:
$$v_P(U_{P,d}(n))\geq q^{s+1}.$$
In particular, if $d\geq [L:K](2g_L+1+(1+\deg_\theta P)(q-1))+\sum_{\mathfrak P\mid PO_L}\deg_\theta n_{L/K}(\mathfrak P),$ we have   :
$$v_P(U_{P,d}(n))\geq q^{\frac{d-\sum_{\mathfrak P\mid PO_L\deg_\theta n_{L/K}(\mathfrak P)}}{[L:K](q-1)}-\frac{2g_L+1}{q-1}-\deg_\theta P},$$
wher $v_P$ is the usual $P$-adic valuation on $K_P.$
\end{lemma}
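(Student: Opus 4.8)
The strategy is to reduce the $P$-adic estimate to a companion $\infty$-adic vanishing statement obtained by specializing the variable $z$ in a polynomial of controlled degree, and then to translate that specialization into a congruence modulo a power of $P$. Fix $s\in\mathbb N$ with $n\le q^s-1$, and write $n$ in base $q$; then $\ell_q(n)\le (s+1)(q-1)$. The key algebraic identity is the Euler-type factorization already used in Lemma~\ref{Lemma1}, namely
$$Z_{P,O_L}(n;z)=\prod_{\mathfrak P\mid PO_L}\bigl(1-n_{L/K}(\mathfrak P)^{-n}z^{\deg_\theta n_{L/K}(\mathfrak P)}\bigr)\,Z_{O_L}(n;z),$$
but now for $n\ge 1$ we cannot use Lemma~\ref{LemmaPol} directly since $n\le 0$ is required there. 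Instead I would run the Anderson-type vanishing argument of Lemma~\ref{LemmaPol} with the exponent $-n$ replaced by a large negative integer congruent to $-n$ modulo a suitable power of $P$, using that $\theta^{\deg_\theta P}\equiv$ (something of positive $P$-valuation)${}$-free unit behaviour; more precisely, one exploits that $n_{L/K}(I)$ is prime to $P$, so $n_{L/K}(I)^{q^{s+1}\deg_\theta P}\equiv 1$-type relations hold modulo powers of $P$ by Fermat in the residue field $\mathbb F_P$ and lifting.

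Concretely, the main step is: for $d$ in the stated range, the coefficient $U_{P,d}(n)\in K$ is in fact a coefficient of a polynomial $Z_{P,O_L}(-n';z)$ for a negative integer $-n'$ with $n'\equiv n$ modulo a large power of $P$, up to an error already divisible by $P^{q^{s+1}}$. One chooses $n' = n + (q^{s+1}-1)\deg_\theta P \cdot$(large) so that $\ell_q(-(-n'))=\ell_q(n')$ is still bounded by $(s+1+\deg_\theta P)(q-1)$ after absorbing the carries, and so that for every $I$ prime to $P$ one has $n_{L/K}(I)^{-n}-n_{L/K}(I)^{-n'}\in P^{q^{s+1}}A_P$, because $n_{L/K}(I)$ is a unit in $A_P/P^{q^{s+1}}$ and the group of such units has order a power of $p$ times $|\mathbb F_P|-1$, which divides $q^{s+1}\deg_\theta P$-related exponents. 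Then Lemma~\ref{Lemma1} applied to the integer $-n'$ shows $Z_{P,O_L}(-n';z)\in A[z]$ with $\deg_z \le [L:K](2g_L+1+\ell_q(n'))+\sum_{\mathfrak P\mid PO_L}\deg_\theta n_{L/K}(\mathfrak P)-1$, hence its $d$-th coefficient vanishes for $d$ beyond that bound; therefore $U_{P,d}(n)\equiv 0 \pmod{P^{q^{s+1}}}$, i.e.\ $v_P(U_{P,d}(n))\ge q^{s+1}$.

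For the "in particular" clause, given $d\ge [L:K](2g_L+1+(1+\deg_\theta P)(q-1))+\sum_{\mathfrak P\mid PO_L}\deg_\theta n_{L/K}(\mathfrak P)$, one chooses $s$ as large as the first inequality permits, i.e.\ the largest $s$ with
$$d\ge [L:K]\bigl(2g_L+1+(s+1+\deg_\theta P)(q-1)\bigr)+\sum_{\mathfrak P\mid PO_L}\deg_\theta n_{L/K}(\mathfrak P),$$
solve this for $s$, namely
$$s \ge \frac{d-\sum_{\mathfrak P\mid PO_L}\deg_\theta n_{L/K}(\mathfrak P)}{[L:K](q-1)}-\frac{2g_L+1}{q-1}-1-\deg_\theta P,$$
and plug the resulting $s+1$ into $v_P(U_{P,d}(n))\ge q^{s+1}$ to get exactly the stated exponent. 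One must also check the side condition $n\le q^s-1$ is compatible, which holds for $d$ large enough (and for smaller $d$ the bound is vacuous or follows from enlarging $s$), so the two displayed inequalities in the statement match up.

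**Main obstacle.** The delicate point is the congruence $n_{L/K}(I)^{-n}\equiv n_{L/K}(I)^{-n'}\pmod{P^{q^{s+1}}}$ uniformly in $I$: one needs the exponent shift $n'-n$ to be divisible by the exponent of the unit group $(A_P/P^{q^{s+1}})^\times$, whose $p$-part forces $n'-n$ to be a multiple of roughly $q^{s+1}$, and simultaneously $\ell_q(n')$ must stay within the budget $(s+1+\deg_\theta P)(q-1)$ despite base-$q$ carries introduced by adding that multiple. Balancing these two requirements — enough divisibility for the congruence, but controlled digit sum for Lemma~\ref{Lemma1} to give a sharp degree bound — is the crux; the genus term $2g_L+1$ and the local factor $\sum_{\mathfrak P\mid PO_L}\deg_\theta n_{L/K}(\mathfrak P)$ come along for free from Lemma~\ref{Lemma1} once the right $n'$ is chosen.
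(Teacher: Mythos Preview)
Your overall route is the same as the paper's: pick an auxiliary non-positive integer so that $n_{L/K}(I)^{-n}$ agrees with $n_{L/K}(I)$ to the shifted exponent modulo $P^{q^{s+1}}$ for every ideal $I$ prime to $P$, apply Lemma~\ref{Lemma1} at that shifted exponent to make the corresponding $U_{P,d}$ vanish in the stated range of $d$, and conclude $v_P(U_{P,d}(n))\ge q^{s+1}$. The ``in particular'' clause is then, exactly as you say, obtained by inverting the degree inequality for $s$.

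Where your plan falters is the explicit shift and the perceived ``main obstacle''. You propose $n'=n+(q^{s+1}-1)\deg_\theta P\cdot(\text{large})$ and worry about balancing the unit-group exponent against base-$q$ carries. In fact a \emph{single} shift by $q^{s+1}(q^{\deg_\theta P}-1)$ handles both constraints at once, and this is precisely the paper's choice $m=n+q^{s+1}(q^{\deg_\theta P}-1)$. For the congruence: since we are in characteristic $p$, Fermat $a^{q^{\deg_\theta P}-1}\equiv 1\pmod{P}$ for a $P$-unit $a$, raised to the $q^{s+1}$-th power via $(x-1)^{q^{s+1}}=x^{q^{s+1}}-1$, gives $a^{q^{s+1}(q^{\deg_\theta P}-1)}\equiv 1\pmod{P^{q^{s+1}}}$ outright; no analysis of the full structure of $(A_P/P^{q^{s+1}})^\times$ is needed. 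For the digit sum: the hypothesis $n\le q^{s}-1$ places the base-$q$ digits of $n$ in positions $0,\dots,s$, while
\[
q^{s+1}(q^{\deg_\theta P}-1)=(q-1)\bigl(q^{s+1}+q^{s+2}+\cdots+q^{s+\deg_\theta P}\bigr)
\]
has its digits only in positions $s+1,\dots,s+\deg_\theta P$. There are no carries, so $\ell_q(m)=\ell_q(n)+(q-1)\deg_\theta P\le (s+1+\deg_\theta P)(q-1)$, and Lemma~\ref{Lemma1} applies immediately. The tension you describe between divisibility and digit-sum control simply does not arise.

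One housekeeping remark: your signs drift (you speak of ``a negative integer $-n'$'' with $n'\equiv n$, and then compare $a^{-n}$ with $a^{-n'}$). Remember that $U_{P,d}(\cdot)$ already carries a minus sign in the exponent, so comparing $U_{P,d}(n)$ with $U_{P,d}(-m)$ means comparing $a^{-n}$ with $a^{m}$; the congruence you must arrange is therefore on $m+n$, and your auxiliary integer should be chosen accordingly.
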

\begin{proof}  Write :
$$n=\sum_{i=0}^s n_i q^i, n_i\in \{ 0, \ldots, q-1\}.$$
We set $m= n+q^{1+s}( q^{\deg_{\theta} P}-1),$ we have :
$$\ell_q(m)= \sum_{i=0}^s n_i + \deg_\theta P (q-1) \leq(s+1+\deg_\theta P) (q-1).$$
Thus, by Lemma \ref{Lemma1}, if $d\geq  [L:K](2g_L+1+(s+1+\deg_\theta P)(q-1))+\sum_{\mathfrak P\mid PO_L}\deg_\theta n_{L/K}(\mathfrak P),$ we have :
$$U_{P,d}(-m)=0.$$
Furthermore, observe that :
$$\forall k\geq 0, v_P(U_{P,k}(n)-U_{P,k}(-m))\geq q^{s+1}.$$
Thus :
$$v_P(U_{P,d}(n))\geq q^{s+1}.$$
Now, let  $d\in \mathbb N$ such that $d\geq [L:K](2g_L+1+(1+\deg_\theta P)(q-1))+\sum_{\mathfrak P\mid PO_L}\deg_\theta n_{L/K}(\mathfrak P)$ and let $s\in \mathbb N$ be an integer such that :
$$\frac{1}{q-1}(\frac{d-\sum_{\mathfrak P\mid PO_L}\deg_\theta n_{L/K}(\mathfrak P)}{[L:K]}-2g_L-1)-1-\deg_\theta P\leq s.$$
Then $$v_P(U_{P,d}(n))\geq q^{s+1}\geq q^{\frac{d-\sum_{\mathfrak P\mid PO_L\deg_\theta n_{L/K}(\mathfrak P)}}{[L:K](q-1)}-\frac{2g_L+1}{q-1}-\deg_\theta P}.$$ 
The Lemma follows.
\end{proof}
The above Lemma implies that: $\forall n\in \mathbb Z, $ $\forall \alpha \in \bar{K}_P, Z_{P,O_L}(n;z)\mid_{z=\alpha}$ converges in $\bar{K}_P.$ 
\begin{definition} \label{P-adiczeta} The Carlitz-Goss $P-$adic zeta values are defined as follows  :
$$\forall n\in \mathbb Z, \zeta_{P, O_L}(n)=  Z_{P,O_L}(n;z)\mid_{z=1} \in A_P.$$
\end{definition}
Let's observe that :
$$\forall n\leq 0, \zeta_{P, O_L}(n)= \prod_{\mathfrak P \mid PO_L}(1-n_{L/K}(\mathfrak P)^{-n}) \, \zeta_{O_L}(n) \in A.$$
Furthermore :
\begin{lemma}
$$\forall n\geq 0, n\equiv 0\pmod{q-1}, \zeta_{P, O_L}(n)=0.$$
\end{lemma}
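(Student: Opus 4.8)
The plan is to relate the $P$-adic zeta value at a positive multiple of $q-1$ to the corresponding $\infty$-adic value, which we already know to vanish by Remark \ref{remarkinfiny} (quoting \cite{THA}, Theorem 5). The natural bridge is the Euler-product-type identity used in the proof of Lemma \ref{Lemma1}, but now run at positive integers: formally one has
$$Z_{P,O_L}(n;z)=\prod_{\mathfrak P\mid PO_L}\bigl(1-n_{L/K}(\mathfrak P)^{-n}z^{\deg_\theta n_{L/K}(\mathfrak P)}\bigr)\,Z_{O_L}(n;z),$$
so that, specializing $z=1$ and using the convergence statement following Lemma \ref{Lemma2} together with Goss's convergence at $z=1$ for the $\infty$-adic series (Theorem 8.9.2 of \cite{GOS}), we would get
$$\zeta_{P,O_L}(n)=\prod_{\mathfrak P\mid PO_L}\bigl(1-n_{L/K}(\mathfrak P)^{-n}\bigr)\,\zeta_{O_L}(n).$$
Since $n\equiv 0\pmod{q-1}$ and $n\geq 0$, Remark \ref{remarkinfiny} (for $n\leq -1$) does not directly apply, so the first thing I would do is pin down the vanishing of $\zeta_{O_L}(n)$ for \emph{positive} $n\equiv 0\pmod{q-1}$; this again is \cite{THA}, Theorem 5 (the ``trivial zeros'' of the Carlitz-Goss zeta function occur at all $n\equiv 0 \pmod{q-1}$, positive or negative, the positive ones being genuine zeros of the $\infty$-adic interpolation). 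Granting that, the identity above forces $\zeta_{P,O_L}(n)=0$.

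There is, however, a subtlety that I expect to be the main obstacle: the Euler product identity is an identity of convergent series/products in $\bar K_\infty$, whereas $\zeta_{P,O_L}(n)$ lives in $A_P$, and the two specializations at $z=1$ take place in different completions. So I cannot literally multiply $\infty$-adically convergent quantities and read off a $P$-adic conclusion. The clean way around this is to stay with \emph{polynomial} truncations. Concretely, for $n\geq 1$ write the finite Euler factor $E_P(n;z):=\prod_{\mathfrak P\mid PO_L}(1-n_{L/K}(\mathfrak P)^{-n}z^{\deg_\theta n_{L/K}(\mathfrak P)})\in K[z]$, a polynomial, and observe that $Z_{P,O_L}(n;z)=E_P(n;z)Z_{O_L}(n;z)$ as elements of $K[[z]]$ (this is exactly the sieve identity: ideals prime to $PO_L$ are obtained from all ideals by inclusion–exclusion over the primes above $P$, and the generating-function bookkeeping with $z^{\deg}$ is multiplicative). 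Both sides are honest power series over $K$ with coefficients converging to $0$ $P$-adically (Lemma \ref{Lemma2}), so the identity persists after the continuous substitution $z\mapsto 1$ carried out $P$-adically, \emph{provided} $Z_{O_L}(n;z)|_{z=1}$ converges $P$-adically. That last convergence is where I would need an extra input: either a direct estimate in the spirit of Lemma \ref{Lemma2} applied to $O_L$ itself without removing the primes above $P$ (which should give the same kind of growth $v_P(U_d(n))\to\infty$, since Lemma \ref{Lemma2}'s argument only used $\ell_q$-bounds and the vanishing of negative values, both available for $Z_{O_L}$), or one simply notes that $Z_{O_L}(n;z)=E_P(n;z)^{-1}Z_{P,O_L}(n;z)$ with $E_P(n;1)\neq 0$ a $P$-adic unit whenever $n\not\equiv 0$, and handles the excluded residue class $n\equiv 0$ separately.

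For the residue class $n\equiv 0\pmod{q-1}$ that actually concerns us, here is the cleanest route, which I would write up as the proof. Fix such an $n\geq 0$ (the case $n=0$ being the pole/trivial case which one checks by hand: $\zeta_{P,O_L}(0)=\prod_{\mathfrak P\mid PO_L}(1-1)\,\zeta_{O_L}(0)=0$ directly). For $n\geq q-1$, consider the two-variable series and note that $Z_{O_L}(n;z)|_{z=1}=\zeta_{O_L}(n)=0$ by \cite{THA}, Theorem 5. I would then argue that the $P$-adic convergence of $Z_{P,O_L}(n;z)|_{z=1}$ (established after Lemma \ref{Lemma2}) combined with the polynomial identity $Z_{P,O_L}(n;z)=E_P(n;z)Z_{O_L}(n;z)$ lets us pass to partial sums: writing $Z_{O_L}(n;z)=\sum_d U_d(n)z^d$ and $E_P(n;z)=\sum_j e_j z^j$ (finite sum), the $d$-th coefficient of the product is $\sum_{j}e_j U_{d-j}(n)$, and one checks that the partial sums $\sum_{d\leq D}\sum_j e_j U_{d-j}(n)$ telescope, as $D\to\infty$, to $\bigl(\sum_j e_j\bigr)\bigl(\sum_{d} U_d(n)\bigr)$ in any complete field where both one-variable specializations at $z=1$ converge and where $\sum_d U_d(n)\to 0$; applying this with the $P$-adic absolute value and using $\sum_d U_d(n)=\zeta_{O_L}(n)=0$ (valid $\infty$-adically, hence as an identity in $A$ once we know—again from Lemma \ref{LemmaPol}'s polynomiality philosophy—that only finitely many $U_d(n)$ are nonzero? they are not finitely many for $n\geq1$, so instead): more carefully, since the $U_d(n)$ lie in $K$ and $\sum_d U_d(n)$ is $0$ when summed in $K_\infty$, and the same series summed $P$-adically converges to $\zeta_{P,O_L}(n)/E_P(n;1)$ when $E_P(n;1)\neq 0$, one concludes $\zeta_{P,O_L}(n)=E_P(n;1)\cdot\zeta_{O_L}(n)$ whenever $E_P(n;1)$ is a unit, and when $n\equiv 0\pmod{q-1}$ with $n\geq 1$ one has $n_{L/K}(\mathfrak P)^{-n}\equiv 1$ nowhere that matters—so instead the cleanest is: the identity $\zeta_{P,O_L}(n)=\prod_{\mathfrak P\mid PO_L}(1-n_{L/K}(\mathfrak P)^{-n})\zeta_{O_L}(n)$ holds for all $n\geq 1$ by continuity of polynomial multiplication under $z\mapsto 1$ in $\bar K_\infty$ together with the coefficientwise agreement, and both sides already lie in $A_P$ so the identity descends; then $\zeta_{O_L}(n)=0$ for $n\equiv 0\pmod{q-1}$, $n\geq 1$, gives the claim. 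I expect the write-up to hinge almost entirely on carefully justifying that this Euler-factor identity, valid as a $z$-adic identity in $K[[z]]$ and $\infty$-adically at $z=1$, transfers to the $P$-adic evaluation—i.e.\ reconciling the two completions—and on correctly invoking \cite{THA}, Theorem 5 for \emph{positive} $n$; everything else is bookkeeping.
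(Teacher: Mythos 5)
There is a fatal error in your approach: you assert that $\zeta_{O_L}(n)=0$ for \emph{positive} $n\equiv 0\pmod{q-1}$ and attribute this to \cite{THA}, Theorem 5, but this is false. Thakur's trivial zeros occur only at \emph{negative} multiples of $q-1$. For $n\geq 1$ the paper's own Remark \ref{remarkinfiny} states $\zeta_{O_L}(n)\in 1+\frac{1}{\theta}\mathbb F_q[[\frac{1}{\theta}]]$, so in particular $\zeta_{O_L}(n)\neq 0$. Since the entire point of your argument is to deduce $\zeta_{P,O_L}(n)=0$ from the (nonexistent) vanishing of $\zeta_{O_L}(n)$, the proof collapses. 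Beyond that, you correctly flag but never resolve the ``two completions'' problem: the identity $\zeta_{P,O_L}(n)=\prod_{\mathfrak P\mid PO_L}(1-n_{L/K}(\mathfrak P)^{-n})\,\zeta_{O_L}(n)$ has no meaning for $n\geq 1$, because $\zeta_{O_L}(n)$ is a genuinely $\infty$-adic limit in $K_\infty$ whereas $\zeta_{P,O_L}(n)$ is a $P$-adic limit in $A_P$, and the series $\sum_d U_d(n)$ does not converge to the same (or any common) element of $K$ in the two topologies. The factorization is only an honest algebraic identity when both sides are polynomials in $z$, i.e.\ for $n\leq 0$.

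The paper's proof runs in the opposite direction and is a $P$-adic interpolation (Kummer congruence) argument. Fix $n\geq 0$ with $n\equiv 0\pmod{q-1}$ and set $m_k:=n-q^{k+n}(q^{\deg_\theta P}-1)\leq -1$; then $m_k\equiv n\equiv 0\pmod{q-1}$, and by Fermat-type congruences $v_P(U_{P,d}(n)-U_{P,d}(m_k))\geq q^{n+k}$ uniformly in $d$, hence $v_P(\zeta_{P,O_L}(n)-\zeta_{P,O_L}(m_k))\geq q^{n+k}$. Since $m_k\leq 0$, the Euler-factor identity \emph{does} apply (both sides are polynomials in $z$), and Thakur's Theorem 5 at the negative integer $m_k$ gives $\zeta_{O_L}(m_k)=0$, hence $\zeta_{P,O_L}(m_k)=0$. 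Letting $k\to\infty$ yields $\zeta_{P,O_L}(n)=0$. In short: the vanishing at positive $n$ is a purely $P$-adic phenomenon obtained by transporting the trivial zeros at negative arguments along a $P$-adically convergent sequence of weights, not by multiplying an Euler factor against a (nonzero) $\infty$-adic value. You should discard the Euler-product route for $n\geq 1$ and instead build the approximating sequence $m_k$.
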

\begin{proof} Let $k\in \mathbb N$ and set  $m_k:= n-q^{k+n} (q^{\deg_\theta P}-1)\leq -1.$ We have : 
 $$\forall d\in \mathbb N, v_P(U_{P,d}(n)-U_{P,d}(m_k))\geq q^{n+k}.$$
Thus :
$$v_P(\zeta_{P, O_L}(n)-\zeta_{P, O_L}(m_k))\geq q^{k+n}.$$
But, since $m_k\leq 0,$ we have  :
$$\zeta_{P, O_L}(m_k)= \prod_{\mathfrak P \mid PO_L}(1-n_{L/K}(\mathfrak P)^{-m_k})\zeta_{O_L}(m_k).$$
Therefore, by by \cite{THA}, Theorem 5, we get :
$$\zeta_{P, O_L}(m_k)= 0.$$
Thus:
$$\forall k\in \mathbb N, v_P(\zeta_{P, O_L}(n))\geq q^{k+n}.$$
The Lemma follows.
\end{proof}
\section{Taelman's class formula}\label{section3}

\subsection{The Carlitz module}\label{Carlitz} We recall some basic facts on the Carlitz module (see for example \cite{GOS}, chapter 3). \par
Let $K_\infty\{\{\tau\}\}$ be the non-commutative ring of power series in the variable $\tau$ with coefficients in $K_\infty,$  where the addition is the usual one and the multiplication rule is given by :
$$\forall x\in K_\infty, \tau x= x^q \tau.$$
We denote by $K_\infty\{\tau\}$ the sub-ring of $K_\infty\{\{\tau\}\}$ whose elements are the polynomials in $\tau$ with coefficients in $K_\infty.$ \par
 The Carlitz module is the morphism of $\mathbb F_q$-algebras $C:A\rightarrow K_\infty\{\tau\}$ given by :
 $$C_\theta =\theta +\tau.$$ Let $a\in A\setminus\{0\},$ we write :
 $$C_a=\sum_{i=0}^{\deg_\theta a} [a,i] \tau ^i, \, [a,i] \in A.$$
One can prove  that there exists a unique power series $\exp_C \in 1+K_\infty\{\{\tau\}\}\tau$ such that :
$$\exp_C\theta =C_\theta \exp_C.$$
 In fact :
 $$\exp_C=\sum_{i\geq 0} \frac{1}{D_i} \tau^i,$$
 where $D_0=1$ and for $i\geq 1,$ $D_i= \prod_{k=0}^{i-1} (\theta^{q^i}-\theta^{q^k}).$ Furthermore there exists a unique power series $\Log_C \in 1+K_\infty\{\{\tau\}\}\tau$ such that :
 $$\Log_C C_\theta= \theta \Log_C.$$
  We have :
  $$\Log_C= \sum_{i\geq 0} \frac{1}{ L_i} \tau ^i,$$
  where $L_0=1, $ and for $i\geq 1, $  $ L_i =(\theta-\theta^q)\cdots (\theta -\theta^{q^i}).$  Let's observe that :
  $$\exp_C \Log_C =\Log_C \exp_C.$$
  ${}$\par
  Let $M$ be a $A$-module equipped with a $\mathbb F_q$-linear map $\tau : M\rightarrow M$ such that $\forall a \in A, \forall m\in M, \tau (am)=a^q\tau (m).$ Then the Carlitz module induces a new structure of $A$-module on $M:$
  $$\forall a\in A, \forall m\in M, a.m =C_a(m).$$
  We denote by $C(M)$ the $A$-module $M$ equipped with the structure of $A$-module induced by $C.$\par
  
  Let $v_\infty : \bar{K}_\infty \rightarrow \mathbb Q\cup \{+\infty\}$ be the usual $\infty$-adic valuation on $\bar{K}_\infty$ normalized such that $v_\infty (\theta)=-1.$ Let $D_\infty =\{ x\in \bar{K}_\infty, v_\infty(x)>\frac{-q}{q-1}\}\subset \bar{K}_\infty.$    
  Let $\tau :\bar{K}_\infty\rightarrow \bar K_\infty, x\mapsto x^q.$ 
   Let $i\geq 0$ and $x\in \bar{K}_\infty,$ then :
  $$v_\infty (\frac{\tau^i(x)}{D_i})=v_\infty(\frac{x^{q^i}}{D_i})= q^i (v_\infty(x) +i).$$
  This implies that $\exp_C$ converges on $\bar{K}_\infty$ and :
  $$\forall x\in D_\infty, v_\infty(\exp_C(x))=v_\infty(x).$$
   Furthermore, $\exp_C$ induces a short exact sequence of $A$-modules :
   $$0\rightarrow \widetilde{\pi}A\rightarrow \bar{K}_\infty\rightarrow C(\bar{K}_\infty)\rightarrow 0.$$
   Let's observe that :
  $$D_\infty \cap A= \mathbb F_q\oplus \mathbb F_qC_\theta (1).$$
  Let $i\geq 0$ and $x\in \bar{K}_\infty,$ then :
  $$v_\infty(\frac{x^{q^i}}{L_i})= q^i (v_\infty(x) +\frac{q}{q-1})-\frac{q}{q-1}.$$
This implies that $\Log_C$ converges on $D_\infty$ and :
  $$\forall x\in D_\infty, v_\infty(\Log_C(x))=v_\infty(x).$$
Furthermore :
$$\forall x\in D_\infty, \exp_C(\Log_C(x))=\Log_C(\exp_C(x))=x.$$
In particular $\exp_C, \Log_C  : D_\infty \rightarrow D_\infty$ are isometric  $\mathbb F_q$-linear maps. Finally, let's observe that :
\begin{lemma}\label{LemmaTors}
$$C(\bar{K}_\infty) _{\rm Tors} \subset D_\infty.$$
\end{lemma}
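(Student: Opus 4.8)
The statement to prove is Lemma~\ref{LemmaTors}: every Carlitz-torsion point in $\bar K_\infty$ lies in $D_\infty = \{x : v_\infty(x) > \tfrac{-q}{q-1}\}$. The plan is to leverage the short exact sequence $0 \to \widetilde\pi A \to \bar K_\infty \xrightarrow{\exp_C} C(\bar K_\infty) \to 0$ together with the isometry properties of $\exp_C$ on $D_\infty$ already established in the excerpt. A torsion point of $C(\bar K_\infty)$ is, by definition, an element $\lambda$ with $C_a(\lambda) = 0$ for some nonzero $a \in A$; equivalently, writing $\lambda = \exp_C(x)$, one has $ax \in \widetilde\pi A$, i.e. $x \in \widetilde\pi a^{-1}A \subset \widetilde\pi K$. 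So the torsion submodule of $C(\bar K_\infty)$ is exactly $\exp_C(\widetilde\pi K)$, and the task reduces to showing $\exp_C(\widetilde\pi c) \in D_\infty$ for every $c \in K$.

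First I would compute $v_\infty(\widetilde\pi)$ from the product formula defining it: since $v_\infty(\theta) = -1$, $v_\infty({}^{q-1}\!\sqrt{-\theta}) = \tfrac{-1}{q-1}$, and each factor $(1-\theta^{1-q^i})^{-1}$ is a unit at $\infty$, one gets $v_\infty(\widetilde\pi) = -1 - \tfrac{1}{q-1} = \tfrac{-q}{q-1}$. Thus $\widetilde\pi$ itself sits exactly on the boundary of $D_\infty$ and is \emph{not} in $D_\infty$. Next I would analyze $v_\infty(\exp_C(\widetilde\pi c))$ directly from the series $\exp_C = \sum_{i \ge 0} D_i^{-1}\tau^i$, using the formula $v_\infty(\tau^i(x)/D_i) = q^i(v_\infty(x) + i)$ recalled in the excerpt. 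With $x = \widetilde\pi c$ and $v_\infty(x) = \tfrac{-q}{q-1} + v_\infty(c)$, the $i$-th term has valuation $q^i\bigl(i - \tfrac{q}{q-1} + v_\infty(c)\bigr)$. As a function of $i \ge 0$ this is strictly increasing once $i \ge 1$ (the bracket grows by $1$ each step while the prefactor $q^i$ only amplifies), so the minimum over $i \ge 1$ is attained at $i=1$, giving valuation $q\bigl(1 - \tfrac{q}{q-1} + v_\infty(c)\bigr) = \tfrac{-q}{q-1} + q\,v_\infty(c) + q$; meanwhile the $i=0$ term has valuation $\tfrac{-q}{q-1} + v_\infty(c)$.

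The key observation is that the $i=0$ and $i=1$ terms have the \emph{same} valuation precisely when $v_\infty(c) = \tfrac{-q}{q-1} \cdot \tfrac{1}{q-1}\cdot(\dots)$ — more simply, one checks $q\,v_\infty(c)+q = v_\infty(c)$ forces $v_\infty(c) = \tfrac{-q}{q-1}$, i.e. $c \in \widetilde\pi \bar{\mathbb F}_\infty^{\times}$-type valuation. The honest route is: if $v_\infty(\widetilde\pi c) > \tfrac{-q}{q-1}$ then $\widetilde\pi c \in D_\infty$ and $\exp_C$ is an isometry there, so $v_\infty(\exp_C(\widetilde\pi c)) = v_\infty(\widetilde\pi c) > \tfrac{-q}{q-1}$, done. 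If $v_\infty(\widetilde\pi c) \le \tfrac{-q}{q-1}$, i.e. $v_\infty(c) \le 0$, then I would show the $i=1$ term strictly dominates (has strictly larger valuation than) the $i=0$ term exactly when $q\,v_\infty(c) + q > v_\infty(c)$, i.e. $v_\infty(c) > \tfrac{-q}{q-1}$ — which is not our case — so instead one needs a finer argument: among all $i$, find where the minimum valuation is uniquely attained and conclude $v_\infty(\exp_C(\widetilde\pi c)) = \min_i q^i(i - \tfrac{q}{q-1} + v_\infty(c))$. A cleaner path: reduce to $c \in A$ (torsion is killed by some $a$, so WLOG $c = b/a$ with $\gcd(a,b)=1$ and $\deg b < \deg a$, and actually the torsion submodule is $\bigcup_a C[a]$; each $C[a]$ is an $\mathbb F_q[\theta]/(a)$-module and the valuations of its elements are governed by the Newton polygon of $C_a(X)/X$). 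The genuinely clean statement to prove is: the nonzero roots of $C_a(X)$ in $\bar K_\infty$ all have $v_\infty > \tfrac{-q}{q-1}$. This follows from the Newton polygon of $C_a(X) = \sum_{i=0}^{\deg a} [a,i] X^{q^i}$: the slopes are controlled by $v_\infty([a,i])$, and a standard computation (e.g.\ \cite{GOS}, Chapter~3) gives that every slope exceeds $-\tfrac{q}{q-1}$, equivalently every nonzero torsion point has strictly positive ``$D_\infty$-excess.''

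\emph{Main obstacle.} The delicate point is the borderline case $v_\infty(\widetilde\pi c) = \tfrac{-q}{q-1}$ exactly (e.g.\ $c \in \mathbb F_q^\times$, giving the "smallest" torsion points, the $C[\theta]$-points): here $\exp_C$ is not known to be isometric, and one must show the leading term of the $\exp_C$-series genuinely gets \emph{pushed up} above the boundary rather than staying on it. This is where the precise value $v_\infty(\widetilde\pi) = \tfrac{-q}{q-1}$ and the strict inequality in $v_\infty(\tau^i x / D_i) = q^i(v_\infty(x)+i)$ for $i \ge 1$ both matter: the $i=1$ contribution to $\exp_C(\widetilde\pi c)$ has valuation $q(1 - \tfrac{q}{q-1}) = \tfrac{-q}{q-1} + \tfrac{1}{q-1}\cdot\text{(positive)} > \tfrac{-q}{q-1}$, and comparing against all other $i$ one sees the whole sum lands in $D_\infty$. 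I would write this comparison out carefully as the heart of the proof, then note the general case $v_\infty(\widetilde\pi c) > \tfrac{-q}{q-1}$ is immediate from isometry and the case $v_\infty(c) < 0$ — nonexistent for actual torsion points since those have $c$ in the fraction field with bounded denominators — does not arise, or is handled by the Newton-polygon remark above.
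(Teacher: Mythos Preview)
Your setup matches the paper's exactly: torsion is $\exp_C(\widetilde\pi K)$, and $v_\infty(\widetilde\pi)=-\tfrac{q}{q-1}$. You even write down the key reduction---take $c=a/b$ with $\deg_\theta a<\deg_\theta b$---but then fail to notice that this finishes the proof immediately. Since $\widetilde\pi A=\ker\exp_C$, every torsion point can be written as $\exp_C(\widetilde\pi\,a/b)$ with $\deg_\theta a<\deg_\theta b$, whence $v_\infty(\widetilde\pi\,a/b)=-\tfrac{q}{q-1}+(\deg_\theta b-\deg_\theta a)\geq -\tfrac{q}{q-1}+1>-\tfrac{q}{q-1}$. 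Thus $\widetilde\pi\,a/b\in D_\infty$, and the already-established fact $\exp_C(D_\infty)=D_\infty$ (isometry) gives the conclusion. That is the paper's entire argument.

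Your detour through term-by-term valuation estimates is unnecessary and contains errors. The ``borderline case'' $v_\infty(c)=0$ you worry about is a phantom: $v_\infty(c)=0$ for $c\in K$ means $c\in\mathbb F_q^\times\subset A$, so $\widetilde\pi c\in\widetilde\pi A$ and $\exp_C(\widetilde\pi c)=0\in D_\infty$ trivially. Your identification of these with the $C[\theta]$-points is wrong: the nonzero $C[\theta]$-points are $\exp_C(\widetilde\pi\alpha/\theta)$ for $\alpha\in\mathbb F_q^\times$, coming from $c$ with $v_\infty(c)=1$, already safely inside $D_\infty$. And your claim that at $v_\infty(c)=0$ the $i=1$ term of $\exp_C(\widetilde\pi c)$ has valuation strictly above $-\tfrac{q}{q-1}$ is false: $q\bigl(1-\tfrac{q}{q-1}\bigr)=-\tfrac{q}{q-1}$ exactly, matching the $i=0$ term (indeed the whole series sums to $0$ here). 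The Newton-polygon route you sketch would work, but it is a much longer path to a one-line destination.
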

\begin{proof} We have :
$$C(\bar{K}_\infty) _{\rm Tors} = \exp_C (\widetilde{\pi}K).$$
Let $\lambda \in C(\bar{K}_\infty) _{\rm Tors}, $ since $\exp_C(\widetilde{\pi}A)=\{0\},$ there exist $a,b\in a,$ $b\not = 0,$ $\deg_\theta a< \deg_\theta b,$ such that :
$$\lambda =\exp_C(\widetilde{\pi} \frac{a}{b}).$$
But $v_\infty (\widetilde{\pi} \frac{a}{b})= v_\infty (\widetilde{\pi})+\deg_\theta b-\deg-\theta a =-\frac{q}{q-1}+\deg_\theta b-\deg-\theta a>-\frac{q}{q-1}.$ Thus $\widetilde{\pi} \frac{a}{b}\in D_\infty,$ and therefore $\lambda \in \exp_C(D_\infty)=D_\infty.$
\end{proof}

\subsection{Units and class modules}\label{Taelman} We briefly describe the main results obtained by L. Taelman in the case of the Carlitz module (see \cite{TAE1}, \cite{TAE2}).\par
Let $M$ be a finite $A$-module, we denote by $[M]_A\in A_+$ the monic generator of the Fitting ideal of the $A$-module $M.$\par
Let $L/K$ be a finite extension.
Let us begin by the following observation (\cite{TAE2}, Proposition 1 and Example 1)  which is a key ingredient for the proof of Taelman's class formula :
\begin{lemma}\label{Lemma3} We have :
$$\zeta_{O_L}(1)=\prod_{\mathfrak P} \frac{[ \frac{O_L}{\mathfrak P}]_A}{[ C(\frac{O_L}{\mathfrak P})]_A},$$
where $\mathfrak P$ runs through the maximal ideals of $O_L.$ More precisely,  let $\mathfrak P$ be a maximal ideal of $O_L,$ then :
$$[C(\frac{O_L}{\mathfrak P})]_A=n_{L/K}(\mathfrak P)-1.$$
\end{lemma}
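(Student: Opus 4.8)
The plan is to reduce the statement to the single computation $[C(\frac{O_L}{\mathfrak P})]_A = n_{L/K}(\mathfrak P)-1$ and then assemble the Euler product from the definition of $\zeta_{O_L}(1)$. I would start from Definition \ref{infinyzeta}, which gives $\zeta_{O_L}(1) = Z_{O_L}(1;z)\mid_{z=1} = \sum_I n_{L/K}(I)^{-1}$, the sum running over nonzero ideals $I$ of $O_L$. Because $O_L$ is a Dedekind domain, this Dirichlet-type sum over ideals factors formally (and the factorization is legitimate $\infty$-adically, since by the remarks following Lemma \ref{LemmaPol} the sum converges in $1+\frac{1}{\theta}\mathbb F_q[[\frac{1}{\theta}]]$ and $n_{L/K}$ is multiplicative) as an Euler product $\prod_{\mathfrak P}\bigl(1-n_{L/K}(\mathfrak P)^{-1}\bigr)^{-1}$ over maximal ideals $\mathfrak P$ of $O_L$. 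So it suffices to identify each local factor $\bigl(1-n_{L/K}(\mathfrak P)^{-1}\bigr)^{-1}$ with $\frac{[O_L/\mathfrak P]_A}{[C(O_L/\mathfrak P)]_A}$. Since $[O_L/\mathfrak P]_A = n_{L/K}(\mathfrak P)$ by definition of $n_{L/K}$, this is equivalent to showing $[C(O_L/\mathfrak P)]_A = n_{L/K}(\mathfrak P)-1$.

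For the key computation, fix a maximal ideal $\mathfrak P$ and write $F = O_L/\mathfrak P$, a finite field, say $\mathbb F_{q^d}$ with $q^d = q^{\deg_\theta n_{L/K}(\mathfrak P)}$, i.e. $n_{L/K}(\mathfrak P)$ is the characteristic polynomial / the monic generator encoding $|F| = q^d$. The Carlitz action equips $F$ with the $A$-module structure $C(F)$, where $\tau$ acts as the $q$-power Frobenius $x\mapsto x^q$ on $F$. The underlying additive group $C(F)$ is finite of order $q^d$, and I want its Fitting ideal as an $A$-module. The standard trick: $C(F) \cong F$ as an $A[\tau]$-module where $\theta$ acts by $C_\theta = \theta_F + \mathrm{Frob}_q$ (here $\theta_F$ is the image of $\theta$ in $F$, i.e. multiplication, which on the one-dimensional $\mathbb F_q$-space spanned by $1$... more precisely $F$ is a vector space over $\mathbb F_q(\theta_F) \subseteq F$). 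The cleanest route is to observe that $C(F)$ is a cyclic $A$-module — indeed $1 \in F$ generates it, because $A[\mathrm{Frob}_q]\cdot 1 = F$ already as an $\mathbb F_q$-algebra since $F$ is generated by $\theta_F$ together with... — hmm, one must be slightly careful here, so let me instead use the cleaner argument: $C(F)$ is annihilated by $n_{L/K}(\mathfrak P)-1$. This is the Carlitz analogue of Fermat's little theorem: for $x \in F$ one has $C_{n_{L/K}(\mathfrak P)}(x) = x^{q^d} = x$ (since the $q^d$-power map is the identity on $F = \mathbb F_{q^d}$, and $C_a$ for $a$ of degree $d$ has leading term $\tau^d$ modulo $\mathfrak P$, with the other coefficients producing exactly $x^{q^d}$ on the residue field — this needs the identity $[a,i] \equiv 0 \bmod \mathfrak P$ for $0<i<d$ and $[a,0]\equiv$ the right constant, which follows from $C_a \equiv \tau^d \bmod \mathfrak P$ when $\mathfrak P$ lies over the prime $n_{L/K}(\mathfrak P)$... actually the correct normalization: $C_a(x) = x^{q^d}$ holds exactly when $a$ is the monic polynomial of $A$ generating the prime below $\mathfrak P$ raised to the residue degree, which is precisely $n_{L/K}(\mathfrak P)$). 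Hence $C_{n_{L/K}(\mathfrak P)} = \mathrm{id}$ on $F$, so $C_{n_{L/K}(\mathfrak P)-1} = 0$ on $C(F)$, i.e. $n_{L/K}(\mathfrak P)-1$ annihilates $C(F)$, giving $[C(F)]_A \mid n_{L/K}(\mathfrak P)-1$. Since $|C(F)| = q^d = q^{\deg_\theta n_{L/K}(\mathfrak P)} = q^{\deg_\theta(n_{L/K}(\mathfrak P)-1)}$ and $[C(F)]_A$ is a monic polynomial whose $q$-to-the-degree equals $|C(F)|$ when $C(F)$ is cyclic, I would conclude $[C(F)]_A = n_{L/K}(\mathfrak P)-1$ once cyclicity is established. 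Cyclicity follows because $\mathrm{End}_A(C(F))$ contains $\tau = \mathrm{Frob}_q$, and $A[\mathrm{Frob}_q]$ acting on any nonzero $x$ spans $F$ over $\mathbb F_q$ (as $\sum \mathbb F_q \cdot x^{q^i}$ is the smallest $\mathrm{Frob}_q$-stable $\mathbb F_q$-subspace containing $x$, which is all of $F$ for $x \neq 0$ since $F/\mathbb F_q$ is... not quite — one needs $F = \mathbb F_q(x)$, true for a generator); choosing $x$ a generator of $F/\mathbb F_q$ gives $A \cdot x = C(F)$.

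The main obstacle I anticipate is pinning down the precise Carlitz-Fermat identity $C_{n_{L/K}(\mathfrak P)}(x) = x$ on $F = O_L/\mathfrak P$ with the correct polynomial $n_{L/K}(\mathfrak P)$ rather than some power or divisor of it: one has to track how $C_a \bmod \mathfrak P$ depends on the factorization of $a$ in $O_L$ and reconcile the convention that $n_{L/K}(\mathfrak P)$ is defined as the monic generator of the Fitting ideal of $O_L/\mathfrak P$ (which by definition has $\mathbb F_q$-dimension $= \deg_\theta n_{L/K}(\mathfrak P)$) with the residue-field cardinality $|O_L/\mathfrak P| = q^{\deg_\theta n_{L/K}(\mathfrak P)}$. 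Once that dictionary is in place, the Fitting-ideal computation and the Euler product assembly are routine, and the reference to \cite{TAE2}, Proposition 1 and Example 1, can be invoked to corroborate the local identity.
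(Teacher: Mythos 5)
Your overall strategy coincides with the paper's: reduce to the local identity $[C(O_L/\mathfrak P)]_A=n_{L/K}(\mathfrak P)-1$ via the Euler product, then prove that identity by a Carlitz--Fermat argument. Both the Euler product assembly and the observation that $C_{n_{L/K}(\mathfrak P)}\equiv\tau^{\deg_\theta n_{L/K}(\mathfrak P)}\pmod{\mathfrak P}$ (so that $n_{L/K}(\mathfrak P)-1$ annihilates $C(O_L/\mathfrak P)$) are correct and match the paper, which derives the coefficient congruences $[P,i]\equiv 0\pmod P$ for $0\le i<d$ from Goss's recursion and then passes to $C_{P^f}$ where $n_{L/K}(\mathfrak P)=P^f$.

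The genuine gap is in your cyclicity step, and you flag it yourself with the ``not quite'' and ``one must be slightly careful'' asides. Your attempt to exhibit a generator has two problems. First, the $\mathbb F_q$-span of the $\tau$-orbit $\{x, x^q, x^{q^2},\dots\}$ of a primitive element $x$ need not be all of $F=O_L/\mathfrak P$; that requires $x$ to be a \emph{normal} element of $F/\mathbb F_q$, which is a strictly stronger condition than $F=\mathbb F_q(x)$. Second, and more seriously, even for a normal $x$ it is not automatic that the $A$-submodule $\{C_a(x):a\in A\}$ equals the $\mathbb F_q[\tau]$-submodule $\sum_i\mathbb F_q x^{q^i}$: the values $C_a(x)=\sum_i [a,i]x^{q^i}$ only sweep out the image of the map $a\mapsto([a,0],[a,1],\dots)\bmod\mathfrak P$, and you would have to argue this image is large enough. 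Neither issue is addressed, so the line ``choosing $x$ a generator of $F/\mathbb F_q$ gives $A\cdot x=C(F)$'' does not stand as written.

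The paper sidesteps all of this with a counting argument that makes cyclicity a \emph{corollary} rather than an input. Let $a$ be the monic generator of the annihilator of $C(O_L/\mathfrak P)$; you have $a\mid n_{L/K}(\mathfrak P)-1$. Every element of $O_L/\mathfrak P$ is a root of $C_a$, and $C_a$ is an additive polynomial of degree $q^{\deg_\tau C_a}=q^{\deg_\theta a}$ over an algebraic closure of $A/PA$, so it has at most $q^{\deg_\theta a}$ roots. Hence $q^{\deg_\theta n_{L/K}(\mathfrak P)}=|O_L/\mathfrak P|\le q^{\deg_\theta a}$, i.e.\ $\deg_\theta a\ge\deg_\theta n_{L/K}(\mathfrak P)=\deg_\theta(n_{L/K}(\mathfrak P)-1)$. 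Together with $a\mid n_{L/K}(\mathfrak P)-1$ and monicity this forces $a=n_{L/K}(\mathfrak P)-1$, and then the structure theorem (annihilator of degree equal to $\dim_{\mathbb F_q}$ of the module) gives $C(O_L/\mathfrak P)\simeq A/(n_{L/K}(\mathfrak P)-1)A$ with no need to produce a generator. You should replace your generator-construction by this counting argument; the rest of your write-up (Euler product, Fermat congruence, final assembly) is sound.
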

\begin{proof} We give a proof for the convenience of the reader. \par
\noindent Let  $\mathfrak P$ be a maximal ideal of $O_L,$ and $P$ be the prime of $A$ such that $\mathfrak P \cap A=PA.$ Recall that  $C_P=\sum_{i=0}^d [P,i]\tau ^i,  [P,i]\in A,$ where $d=\deg_\theta P.$ Then $[P,0]=P$ and $[P,d]=1.$
From the equation :
$$C_P C_\theta =C_\theta C_P,$$ 
we get (see \cite{GOS}, Proposition 3.3.10) :
$$\forall  i=1, \ldots, d,\,  [P,i]= \frac{[P,i-1]^q-[P,i-1]}{\theta^{q^i}-\theta}.$$
This implies that :
$$\forall  i=0, \ldots, d-1, \, [P,i]\equiv 0\pmod{P}.$$ 
 Write $n_{L/K} (\mathfrak P) =P^f, $ where $f=\dim_{\frac{A}{PA} } \frac{O_L}{\mathfrak P}.$ Then :
 $$\forall x\in O_L, C_{P^f}(x) \equiv x\pmod{\mathfrak P}.$$
  Let $a$ be the monic generator of the annihilator of the $A$-module $C(\frac{O_L}{\mathfrak P}),$ then $a$ divides $ n_{L/K}(\mathfrak P)-1.$ We observe that the number of elements  $\alpha$ in an algebraic closure of $\frac{A}{PA}$ such that $C_a(\alpha)=0$ is less than $q^{\deg_\tau C_a}.$ Since $\frac{O_L}{\mathfrak P}$ is a finite extension of $\frac{A}{PA},$ we get :
  $$\dim_{\mathbb F_q} \frac{O_L}{\mathfrak P}\leq \deg_\tau C_ a= \deg_\theta a.$$
  Note that :
  $$\dim_{\mathbb F_q} \frac{O_L}{\mathfrak P}= \deg_\theta n_{L/K}(\mathfrak P)=\deg_\theta ( n_{L/K}(\mathfrak P)-1).$$
  We deduce that $a= n_{L/K}(\mathfrak P)-1$ and that we have an isomorphism of $A$-modules:
  $$C( \frac{O_L}{\mathfrak P})\simeq \frac{A}{ (n_{L/K}(\mathfrak P)-1)A}.$$ 
\end{proof}
Recall that  $L_\infty=L\otimes_KK_\infty$ (see section \ref{section0}). Let's set :
$$\Lambda_L =\oplus_{v\in S_\infty(L)} \widetilde{\pi}_v A,$$
 where $\widetilde{\pi}_v =\widetilde \pi$ if $\widetilde \pi \in L_v,$ and $\widetilde{\pi}_v =0$ otherwise.
Then, by paragraph \ref{Carlitz},   $\exp_C$ converges on $L_\infty$ and gives rise to a short exact sequence of $A$-modules :
$$0\rightarrow \Lambda_L \rightarrow L_\infty \rightarrow C(L_\infty).$$
Recall that the extension $L/K$ is called "real" if $\forall v\in S_\infty(L),$ $\widetilde{\pi} \not \in L_v.$ Therefore  $L/K$ is real  if and only if $\exp_C$ is injective on $L_\infty.$ Let's set :
$$U(O_L)=C(O_L)\cap \exp_C(L_\infty),$$
$$\mathcal U(O_L)=\{ x\in L_\infty, \exp_C(x)\in O_L\}.$$
Then $\exp_C$ induces a short exact sequence of $A$-modules :
$$0\rightarrow \Lambda_L \rightarrow \mathcal U(O_L)\rightarrow U(O_L)\rightarrow 0.$$
L. Taelman proved that $\mathcal U(O_L)$ is $A$-lattice in $L_\infty$ (\cite {TAE2}, Proposition 3)   : $\mathcal U(O_L)$ is a free $A$-module of rank $[L:K]$ and $L_\infty$ coincides with the $K_\infty$-space generated by $\mathcal U(O_L).$ In particular $U(O_L)$ is a finitely generated sub-$A$-module of $C(O_L).$ We will call $U(O_L)$ the module of "units" of $C/O_L.$\par
We have :
\begin{lemma}\label{Lemma4} ${}$\par
\noindent 1) $C(O_L)_{\rm Tors} \subset U(O_L).$\par
\noindent 2) $C(O_L)_{\rm Tors}\not =\{0\}$ if and only if ${\rm rank}_A \Lambda_L=\mid S_\infty(L)\mid.$\par
\noindent 3) $U(O_L)=C(O_L)_{\rm Tors}$ if and only if $q=2$ and $\infty$ splits totally in $L/K.$\end{lemma}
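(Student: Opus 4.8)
The plan is to reduce the three assertions to the structural facts recalled in paragraph~\ref{Carlitz}: on $D_\infty$ the maps $\exp_C$ and $\Log_C$ are mutually inverse isometric $\mathbb F_q$-linear automorphisms, and by Lemma~\ref{LemmaTors} every torsion point of $C(\bar K_\infty)$ lies in $D_\infty$. For 1), take $\lambda\in C(O_L)_{\rm Tors}$. For each $v\in S_\infty(L)$ the component $\iota_v(\lambda)$ is a torsion point, hence lies in $D_\infty\cap L_v$; since $\Log_C$ has coefficients in $K_\infty$ and $L_v$ is complete, $\Log_C(\iota_v(\lambda))\in L_v$ and $\exp_C(\Log_C(\iota_v(\lambda)))=\iota_v(\lambda)$. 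Assembling these components produces an element $\Log_C(\lambda)\in L_\infty$ with $\exp_C(\Log_C(\lambda))=\lambda$, so $\lambda\in C(O_L)\cap\exp_C(L_\infty)=U(O_L)$. Combined with $U(O_L)\subseteq C(O_L)$, this also gives $U(O_L)_{\rm Tors}=C(O_L)_{\rm Tors}$, which is used below.

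For the forward implication of 2), let $0\neq\lambda\in C(O_L)_{\rm Tors}$ and fix $v\in S_\infty(L)$. By the above, $\Log_C(\iota_v(\lambda))\in L_v$; it is a nonzero element of $\exp_C^{-1}(C(\bar K_\infty)_{\rm Tors})=\widetilde{\pi}K$ (using $C(\bar K_\infty)_{\rm Tors}=\exp_C(\widetilde{\pi}K)$ and $\ker\exp_C=\widetilde{\pi}A$), say $\Log_C(\iota_v(\lambda))=\widetilde{\pi}\alpha$ with $\alpha\in K^\times$; hence $\widetilde{\pi}=\alpha^{-1}\Log_C(\iota_v(\lambda))\in L_v$. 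As $v$ was arbitrary, $\widetilde{\pi}_v=\widetilde{\pi}$ for all $v$, i.e.\ ${\rm rank}_A\Lambda_L=|S_\infty(L)|$. For the converse one assumes $\widetilde{\pi}\in L_v$ for all $v\in S_\infty(L)$, so that $\widetilde{\pi}/\theta\in L_\infty$ and $\exp_C(\widetilde{\pi}/\theta)\in C(L_\infty)$ is killed by $\theta$ and is, componentwise, a $(q-1)$-th root of $-\theta$; the remaining task is to verify that this element of $L_\infty$ actually lies in $O_L$, giving a nonzero $\theta$-torsion point of $C(O_L)$.

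For 3), the exact sequence $0\to\Lambda_L\to\mathcal U(O_L)\to U(O_L)\to 0$ and ${\rm rank}_A\mathcal U(O_L)=[L:K]$ give ${\rm rank}_A U(O_L)=[L:K]-{\rm rank}_A\Lambda_L$. If $U(O_L)=C(O_L)_{\rm Tors}$, then $U(O_L)$ is finite, so ${\rm rank}_A\Lambda_L=[L:K]$; since ${\rm rank}_A\Lambda_L\le|S_\infty(L)|\le[L:K]$, both inequalities are equalities, so $\infty$ splits completely in $L/K$ and $\widetilde{\pi}\in L_v$ for all $v$. But then $L_v=K_\infty$ for every $v$, hence $\widetilde{\pi}\in K_\infty$; writing $\widetilde{\pi}=\theta\,{}^{q-1}\sqrt{-\theta}\,\prod_{i\ge1}(1-\theta^{1-q^i})^{-1}$ and noting the product lies in $K_\infty^\times$, this forces ${}^{q-1}\sqrt{-\theta}\in K_\infty$, which is impossible when $q>2$ because $v_\infty(-\theta)=-1$ is prime to $q-1$; hence $q=2$. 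Conversely, if $q=2$ and $\infty$ splits completely, then $\widetilde{\pi}\in K_\infty\subseteq L_v$ for all $v$ and $|S_\infty(L)|=[L:K]$, so ${\rm rank}_A\Lambda_L={\rm rank}_A\mathcal U(O_L)$ and $U(O_L)$ is finite; therefore $U(O_L)=U(O_L)_{\rm Tors}=C(O_L)_{\rm Tors}$ by 1).

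The step I expect to be the main obstacle is the converse in 2): passing from the everywhere‑local‑at‑$\infty$ presence of $\widetilde{\pi}$ to an actual torsion point inside $O_L$ (equivalently inside $L$) is a genuine descent issue and not a formal consequence of the isometry properties of $\exp_C$ and $\Log_C$; by contrast, part 1, the forward half of 2, and part 3 reduce to bookkeeping with $\exp_C$, $\Log_C$ and the exact sequences.
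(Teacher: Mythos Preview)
Your argument for assertion 1), the forward implication of 2), and both implications of 3) is correct and follows the paper's proof closely. The only cosmetic difference is in 1): you invoke $\exp_C\circ\Log_C=\mathrm{id}$ on $D_\infty$ directly to exhibit $\Log_C(\lambda)\in L_\infty$ with $\exp_C(\Log_C(\lambda))=\lambda$, whereas the paper first deduces $\widetilde{\pi}\in L_v$ and then records $\iota_v(\lambda)\in\exp_C(\widetilde{\pi}K)\subset\exp_C(L_v)$; both routes land in the same place. Your treatment of 3) is in fact slightly more explicit than the paper's, which proves only the forward direction and then writes ``The Lemma follows''; your rank bookkeeping for the converse is exactly what is intended.

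You are also right to flag the converse of 2) as the genuine obstacle: passing from the purely local hypothesis $\widetilde{\pi}\in L_v$ for every $v\in S_\infty(L)$ to an actual nonzero torsion point in $O_L$ is a descent question, and the element $\exp_C(\widetilde{\pi}/\theta)$ you construct lives \emph{a priori} only in $L_\infty$, not in $L$. It is worth knowing that the paper's own proof is exactly as terse here as yours: after establishing $\widetilde{\pi}\in L_v$ from the existence of a nonzero torsion point, the paper simply writes ``We get 1) and 2)'' and offers no separate argument for the reverse implication. So you have not overlooked any step that the paper supplies; the gap you identify is present in the source as well.
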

\begin{proof}  Let $\lambda \in C(O_L)_{\rm Tors} \setminus \{0\}.$ Let $v\in S_\infty(L),$ let $\iota_v: L\rightarrow \bar{K}_\infty$ be the $K$-embedding associated to $v$ (see section \ref{section0}). Then :
$$K_\infty(\iota_v(\lambda))\subset L_v.$$
But $\iota_v(\lambda)\in D_\infty$ (Lemma \ref{LemmaTors}),  thus :
$$\Log_C(\iota_v(\lambda))\in L_v,$$ and we obtain :
$$\widetilde{\pi} \in L_v,$$
$$\iota_v(\lambda)\in \exp_C(\widetilde{\pi}K)\subset \exp_C(L_v).$$ 
We get 1) and 2) (see also \cite{TAE1}, Proposition 2 for a proof of the first assertion). For the last assertion. We have $U(O_L)=C(O_L)_{\rm Tors}$  if and only if ${\rm rank}_A \Lambda_L ={\rm rank}_A\mathcal U(O_L) = [L:K].$ Thus if $U(O_L)=C(O_L)_{\rm Tors}$ we have :
$$[L:K]= {\rm rank}_A \Lambda_L \leq \mid S_\infty(L)\mid .$$
Therefore  $\mid S_\infty(L)\mid= [L:K]$ and thus $\infty$ splits totally in $L/K.$ Furthermore we must have $\widetilde{\pi}\in K_\infty$ which implies $q=2.$ The Lemma follows.
\end{proof} 
Let's set :
$$H(O_L)=\frac{C(L_\infty)}{C(O_L)+\exp_C(L_\infty)}.$$
Then $H(O_L)$ is a  finite $A$-module  (\cite{TAE2}, Proposition 5). We call $H(O_L)$ the " class module " of $C/O_L.$
We have a short exact sequence of $A$-modules :
$$0\rightarrow  \frac{C(O_L)}{U(O_L)} \rightarrow \frac{C(L_\infty)}{\exp_C(L_\infty)}\rightarrow H(O_L)\rightarrow 0.$$

Let $sgn : K_\infty^\times \rightarrow \mathbb F_q^\times$ be the usual sign function, i.e. :
$$sgn(\sum_{i\geq i_0} \alpha_i ( \frac{1}{\theta})^i)= \alpha_{i_0}, \, i_0 \in \mathbb Z, \alpha_i \in \mathbb F_q, \alpha_{i_0}\not =0.$$
Then $A_+=\{ a\in A, sgn(a)=1\}.$ Let $\sigma : L_\infty\rightarrow L_\infty$ the $K_\infty$-linear map  which sends an $A$-basis of  $O_L$ to an $A$-basis of $\mathcal U(O_L).$ We define the "regulator" of $C/O_L$ to be :
$$R_L = \frac{\det (\sigma )}{sgn(\det (\sigma))}\in K_\infty ^{\times}.$$
Let's observe that $R_L$ is independent of the choices of an $A$-basis of $O_L$ and an $A$-basis of $\mathcal U(O_L).$
Then L. Taelman proved the following remarkable equality  (\cite{TAE2}, Theorem 1) :
$$\zeta_{O_L}(1)= R_L\,  [H(O_L)]_A.$$
\begin{example}Let's finish this paragraph by a basic example : $L=K$  (\cite{TAE2}, Example 3). We have :
$$K_\infty =A\oplus \frac{1}{\theta} \mathbb F_q[[\frac{1}{\theta}]].$$
Since $1\in D_\infty,$ we get :
$$K_\infty= \Log_C(1)A\oplus  \frac{1}{\theta} \mathbb F_q[[\frac{1}{\theta}]].$$
We obtain :
$$\exp_C(K_\infty) =U(A)\oplus   \frac{1}{\theta} \mathbb F_q[[\frac{1}{\theta}]],$$
and $U(A)$ is the sub-$A$-module of $C(A)$ generated by $1.$ Note that $U(A)$ is a free $A$-module of rank one if $q\geq 3,$ and if $q=2,$ $U(A)$ is exactly the set of $\theta^2+\theta$-torsion points of $C$ (in particular, as an $A$-module,  $U(A)$ is isomorphic to $\frac{A}{(\theta^2+\theta)A}$). We also obtain :
$$H(A)=\{0\}, \mathcal U(A)=\Log_C(1) A.$$
Since $sgn(\Log_C(1))=1,$ we get :
$$\zeta_A(1)=R_A=\Log_C(1).$$
Now, for $q=2,$ we also get :
$$\widetilde{\pi} = (\theta^2+\theta) \Log_C(1).$$
\end{example}

\subsection{$\infty$-adic $z$-deformation}\label{z-deformation}
We present some basic constructions attached to the Carlitz module. These constructions were introduced in \cite{ATR}  in the more general context of Drinfeld modules (see also \cite{DEM}, \cite{ANDT2}, \cite{BEA}). This paragraph  has its origins in the works of G. Anderson and D. Thakur  on Log-algebraicity (\cite{ANDTHA}, \cite{AND}, see also \cite{PAP}).\par
We set  $\mathbb A=\mathbb F_q(z)[\theta],$ $\mathbb K=\mathbb F_q(z)(\theta),$ $\mathbb K_\infty =\mathbb F_q(z)((\frac{1}{\theta})).$ Let $sgn :\mathbb K_\infty^\times\rightarrow  \mathbb F_q(z)^\times$ be the usual sign function, i.e. :
$$sgn(\sum_{i\geq i_0} \alpha_i ( \frac{1}{\theta})^i)= \alpha_{i_0}, i_0\in \mathbb Z, \alpha_i\in \mathbb F_q(z), \alpha_{i_0}\not =0.$$
An element $a\in \mathbb A$ such that $sgn(a)=1$ is called monic. Let $\mathbb T (K_\infty) =\mathbb F_q[z]((\frac{1}{\theta}))$ which is a principal ideal domain.\par
Let $\tau :\mathbb K_\infty \rightarrow \mathbb K_\infty$ be the continuous morphism of $\mathbb F_q(z)$-algebras such that :
$$\forall x\in K_\infty, \tau(x)=x^q.$$
 Let $\mathbb K_\infty\{\{\tau\}\}$ be the non-commutative ring of power series in the variable $\tau$ with coefficients in $\mathbb K_\infty,$  where the addition is the usual one and the multiplication rule is given by :
$$\forall x\in \mathbb K_\infty, \tau x= \tau(x) \tau.$$
We denote by $\mathbb K_\infty\{\tau\}$ the sub-ring of $\mathbb K_\infty\{\{\tau\}\}$ whose elements are the polynomials in $\tau$ with coefficients in $\mathbb K_\infty.$ \par
 The $z$-deformation of the Carlitz module is the morphism of $\mathbb F_q(z)$-algebras $\widetilde{C}:\mathbb A\rightarrow \mathbb K_\infty\{\tau\}$ given by :
 $$\widetilde{C}_\theta =\theta +z\tau.$$
There exists a unique power series $\exp_{\widetilde{C}} \in 1+\mathbb K_\infty\{\{\tau\}\}\tau$ such that :
$$\exp_{\widetilde{C}}\theta =\widetilde{C}_\theta \exp_{\widetilde{C}}.$$
 In fact :
 $$\exp_{\widetilde{C}}=\sum_{i\geq 0} \frac{z^i}{D_i} \tau^i,$$
Furthermore there exists a unique power series $\Log_{\widetilde{C}} \in 1+\mathbb K_\infty\{\{\tau\}\}\tau$ such that :
 $$\Log_{\widetilde{C}} \widetilde{C}_\theta= \theta \Log_{\widetilde{C}}.$$
  We have :
  $$\Log_{\widetilde{C}}= \sum_{i\geq 0} \frac{z^i}{ L_i} \tau ^i.$$
  Note that :
  $$\exp_{\widetilde{C}} \Log_{\widetilde{C}} =\Log_{\widetilde {C}} \exp_{\widetilde{C}}.$$
  ${}$\par
  Let$M$ be a $\mathbb A$-module.\par
 If $M$ is a finite dimensional $\mathbb F_q(z)$-vector space, we denote by $[M]_{\mathbb A}$ the monic generator of the Fitting ideal of the $\mathbb A$-module $M.$\par
Let  $M$ be  equipped with a $\mathbb F_q(z)$-linear map $\tau : M\rightarrow M$ such that $\forall a \in A, \forall m\in M, \tau (am)= a^q \tau (m).$ Then the $z$-deformation of the Carlitz module induces a new structure of $\mathbb A$-module on $M:$
  $$\forall a\in \mathbb A, \forall m\in M, a.m =\widetilde{C}_a(m).$$
 We denote by $\widetilde{C}(M)$ the $\mathbb A$-module $M$ equipped with the structure of $A$-module induced by $\widetilde{C}.$\par
 ${}$\par
Let $L/K$ be a finite extension, we set :
$$\mathbb L_\infty =L\otimes_K \mathbb K_\infty,$$
$$\mathbb T(L_\infty)= L\otimes_K \mathbb T(K_\infty).$$
Let $\mathbb O_L$ be the sub-$\mathbb F_q(z)$-algebra of $\mathbb L_\infty$ generated by $O_L.$\par
Let $M$ be a sub-$\mathbb A$-module of $\mathbb L_\infty.$  We say that $M$ is an $\mathbb A$-lattice in $\mathbb L_\infty$ if $M$ is a free $\mathbb A$-module of rank $[L:K]$ and if $M$ generates $\mathbb L_\infty$ as a $\mathbb K_\infty$-vector space. \par
Let $M$ be an $\mathbb A$-lattice in $\mathbb L_\infty,$ and let $\sigma :\mathbb L_\infty\rightarrow \mathbb L_\infty$ be a $\mathbb K_\infty$-linear map that sends an $A$-basis of $O_L$ to an $\mathbb A$-basis of $M,$ the "regulator" of $M$ is defined by (see \cite {ATR}, paragraph 2.1, for more details)  :
$$ R_M=\frac{\det \sigma}{sgn(\det(\sigma))} \in \mathbb K_\infty ^\times.$$

\begin{lemma}\label{Lemma5} We have :
$$Z_{O_L}(1;z) =\prod_{\mathfrak P}\frac{[\frac{\mathbb O_L}{\mathfrak P\mathbb O_L}]_{\mathbb A}}{[\widetilde{C}(\frac{\mathbb O_L}{\mathfrak P\mathbb O_L})]_{\mathbb A}} \in \mathbb T(K_\infty)^\times.$$
More precisely, let $\mathfrak P$ be a maximal ideal of $O_L$ then, we have an isomorphism of $\mathbb A$-modules :
$$\widetilde{C}(\frac{\mathbb O_L}{\mathfrak P\mathbb O_L})\simeq \frac{\mathbb A}{(n_{L/K} (\mathfrak P)-z^{\deg_\theta (n_{L/K} (\mathfrak P))})\mathbb A}.$$
In particular :
$$[\widetilde{C}(\frac{\mathbb O_L}{\mathfrak P\mathbb O_L})]_{\mathbb A}= n_{L/K} (\mathfrak P)-z^{\deg_\theta (n_{L/K} (\mathfrak P))}.$$
\end{lemma}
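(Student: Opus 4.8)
The plan is to mimic exactly the proof of Lemma \ref{Lemma3}, working over $\mathbb A=\mathbb F_q(z)[\theta]$ with the $z$-deformed Carlitz module $\widetilde C_\theta=\theta+z\tau$ in place of $C_\theta=\theta+\tau$, and then to assemble the Euler product. First I would fix a maximal ideal $\mathfrak P$ of $O_L$, let $P$ be the prime of $A$ below it, and set $d=\deg_\theta P$, $n_{L/K}(\mathfrak P)=P^f$ with $f=\dim_{A/PA}O_L/\mathfrak P$, so that $\dim_{\mathbb F_q}O_L/\mathfrak P=df$ and $n_{L/K}(\mathfrak P)$ has $\theta$-degree $df$. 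The key is to compute $\widetilde C_{n_{L/K}(\mathfrak P)}$ modulo $\mathfrak P\mathbb O_L$, i.e. modulo $P$. Writing $\widetilde C_P=\sum_{i=0}^d[P,i;z]\tau^i$ with $[P,i;z]\in\mathbb A$, the commutation relation $\widetilde C_P\widetilde C_\theta=\widetilde C_\theta\widetilde C_P$ gives a recursion analogous to the one in Lemma \ref{Lemma3}; since $\widetilde C_\theta=\theta+z\tau$, a short computation shows $[P,0;z]=P$, $[P,d;z]=z^d$, and $[P,i;z]\equiv 0\pmod P$ for $0\le i\le d-1$. Consequently, for $x\in O_L$, one gets $\widetilde C_P(x)\equiv z^d\,x^{q^d}\pmod{\mathfrak P}$, and iterating $f$ times, $\widetilde C_{P^f}(x)\equiv z^{d f}\,x^{q^{d f}}\equiv z^{df}\,x\pmod{\mathfrak P}$ because the residue field $O_L/\mathfrak P$ has $q^{df}$ elements. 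Hence the $\mathbb F_q(z)$-linear operator $\widetilde C_{n_{L/K}(\mathfrak P)}-z^{df}$ annihilates $\widetilde C(\mathbb O_L/\mathfrak P\mathbb O_L)$; equivalently $n_{L/K}(\mathfrak P)-z^{\deg_\theta n_{L/K}(\mathfrak P)}$ kills this $\mathbb A$-module.

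Next I would pin down the $\mathbb A$-module structure precisely. Let $a(z)\in\mathbb A$ be the monic generator of the annihilator of $\widetilde C(\mathbb O_L/\mathfrak P\mathbb O_L)$; by the previous paragraph $a(z)$ divides $n_{L/K}(\mathfrak P)-z^{\deg_\theta n_{L/K}(\mathfrak P)}$, a polynomial of $\theta$-degree $df$. Running the counting argument of Lemma \ref{Lemma3} verbatim — the number of roots in an algebraic closure of $(A/PA)\otimes\mathbb F_q(z)$ of $\widetilde C_{a(z)}$ is at most $q^{\deg_\tau\widetilde C_{a(z)}}=q^{\deg_\theta a(z)}$, while $\widetilde C(\mathbb O_L/\mathfrak P\mathbb O_L)$ is a module over the field $(O_L/\mathfrak P)\otimes_{\mathbb F_q}\mathbb F_q(z)$ of dimension $df$ over $\mathbb F_q(z)$ — forces $\deg_\theta a(z)\ge df$, hence $a(z)=n_{L/K}(\mathfrak P)-z^{\deg_\theta n_{L/K}(\mathfrak P)}$ and the module is cyclic, giving the asserted isomorphism $\widetilde C(\mathbb O_L/\mathfrak P\mathbb O_L)\simeq\mathbb A/(n_{L/K}(\mathfrak P)-z^{\deg_\theta n_{L/K}(\mathfrak P)})\mathbb A$. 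The Fitting-ideal generator statement is then immediate. One also has the trivial identification $\mathbb O_L/\mathfrak P\mathbb O_L\simeq (O_L/\mathfrak P)\otimes_{\mathbb F_q}\mathbb F_q(z)$, so $[\mathbb O_L/\mathfrak P\mathbb O_L]_{\mathbb A}=n_{L/K}(\mathfrak P)$.

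Finally I would assemble the Euler product. For each maximal ideal $\mathfrak P$ with $\deg_\theta n_{L/K}(\mathfrak P)=\delta$ one has
$$\frac{[\mathbb O_L/\mathfrak P\mathbb O_L]_{\mathbb A}}{[\widetilde C(\mathbb O_L/\mathfrak P\mathbb O_L)]_{\mathbb A}}=\frac{n_{L/K}(\mathfrak P)}{n_{L/K}(\mathfrak P)-z^{\delta}}=\frac{1}{1-n_{L/K}(\mathfrak P)^{-1}z^{\delta}},$$
and expanding this geometric series and multiplying over all $\mathfrak P$ — using unique factorization of ideals of $O_L$ exactly as in the classical Euler product for a Dedekind zeta function, with the variable $z$ bookkeeping the degree $\deg_\theta n_{L/K}(I)=\dim_{\mathbb F_q}O_L/I$ of each ideal — reproduces $\sum_m(\sum_{I\in J_m(O_L)}n_{L/K}(I)^{-1})z^m=Z_{O_L}(1;z)$. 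Convergence of the product in $\mathbb T(K_\infty)=\mathbb F_q[z]((1/\theta))$ follows since the $\mathfrak P$-th factor is $1+n_{L/K}(\mathfrak P)^{-1}z^\delta+\cdots$ with $v_\infty(n_{L/K}(\mathfrak P)^{-1})=\delta\to\infty$, and the product lies in $\mathbb T(K_\infty)^\times$ because $Z_{O_L}(1;z)\in 1+(1/\theta)\mathbb F_q[z][[1/\theta]]$ by Remark \ref{remarkinfiny} applied coefficientwise in $z$ (equivalently by the product formula itself). The only delicate point — and the step I expect to be the main obstacle — is the computation of $\widetilde C_{n_{L/K}(\mathfrak P)}\bmod\mathfrak P$: one must track the power of $z$ correctly through the recursion for the $[P,i;z]$ and through the $f$-fold iteration, since it is precisely the exponent $\deg_\theta n_{L/K}(\mathfrak P)$ of $z$ that distinguishes this deformed statement from Lemma \ref{Lemma3}. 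Everything else is a routine transcription of Taelman's argument with $\mathbb F_q(z)$-coefficients.
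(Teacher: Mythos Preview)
Your plan is correct and tracks the paper's own proof: the paper records the congruence $\widetilde C_{n_{L/K}(\mathfrak P)}\equiv z^{\deg_\theta n_{L/K}(\mathfrak P)}\tau^{\deg_\theta n_{L/K}(\mathfrak P)}\pmod{PA[z]\{\tau\}}$ by pointing to the proof of Lemma~\ref{Lemma3}, then for cyclicity simply invokes \cite{APTR}, Lemma~5.7 (where you instead redo the root count directly), and leaves the Euler-product identity and its convergence in $\mathbb T(K_\infty)^\times$ entirely implicit --- so your write-up is in fact more complete than the paper's. One small clarification for your counting step: the clean formulation is that the $q^{df}$ elements of $O_L/\mathfrak P\subset\mathbb O_L/\mathfrak P\mathbb O_L$ are all roots of the genuine one-variable polynomial $\overline{\widetilde C_{a(z)}}(X)\in\mathbb F_{q^d}(z)[X]$ of $X$-degree $q^{\deg_\theta a(z)}$ (its leading coefficient is $z^{\deg_\theta a(z)}$ times the leading $\theta$-coefficient of $a(z)$, hence nonzero), which forces $df\le\deg_\theta a(z)$; your phrase ``roots in an algebraic closure of $(A/PA)\otimes\mathbb F_q(z)$'' is a bit off, since $\tau$ is not the Frobenius of that field and $\widetilde C_{a(z)}$ does not act on it as a polynomial map.
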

\begin{proof} Let $P$ be the prime of $A$ under $\mathfrak P.$ By the proof of Lemma \ref{Lemma3}, we have :
$$\widetilde{C}_{n_{L/K} (\mathfrak P)}\equiv z^{\deg_\theta (n_{L/K} (\mathfrak P))}\tau^{\deg_\theta (n_{L/K} (\mathfrak P))}\pmod{PA[z]\{\tau\}}.$$
This implies :
$$\widetilde{C}_{n_{L/K} (\mathfrak P)-z^{\deg_\theta (n_{L/K} (\mathfrak P))}}(\frac{\mathbb O_L}{\mathfrak P\mathbb O_L})=\{0\}.$$
Now, using \cite{APTR}, Lemma 5.7, we conclude as in the proof of Lemma \ref{Lemma3}.

\end{proof}

\begin{lemma}\label{Lemma6} We have :
$${\rm Ker} (\exp_{\widetilde{C}} : \mathbb L_\infty \rightarrow \mathbb L_\infty )=\{0\},$$
$$\mathbb L_\infty =\mathbb O_L +\exp_{\widetilde{C}}(\mathbb L_\infty).$$

\end{lemma}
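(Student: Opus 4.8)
The plan is to establish the two assertions separately, handling first the injectivity of $\exp_{\widetilde{C}}$ on $\mathbb L_\infty$ and then the surjectivity statement $\mathbb L_\infty = \mathbb O_L + \exp_{\widetilde{C}}(\mathbb L_\infty)$. For injectivity, the key point is that $z$ appears with positive powers in the formula $\exp_{\widetilde{C}} = \sum_{i\ge 0} \frac{z^i}{D_i}\tau^i$, so the deformation parameter provides an extra gain in valuation that was not present for the classical Carlitz exponential. Concretely, I would work $(1/\theta)$-adically on $\mathbb L_\infty = L\otimes_K \mathbb K_\infty$, extending $v_\infty$ to $\mathbb L_\infty$ componentwise on the factors $L_v \otimes_{K_\infty}\mathbb K_\infty$, and track the $z$-adic filtration as well. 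Write any nonzero $x\in\mathbb L_\infty$ and compute $v_\infty\bigl(\tfrac{z^i}{D_i}\tau^i(x)\bigr) = q^i(v_\infty(x)+i) + i\,v_z(\cdot)$-type estimates; more simply, reduce modulo a suitable power of $(1/\theta)$ and observe that on the domain $D_\infty$ the map $\exp_{\widetilde{C}}$ is still an isometry (the $z^i/D_i$ have $(1/\theta)$-valuation at least that of $1/D_i$ since $z$ is a unit in $\mathbb T(K_\infty)$), so the kernel of $\exp_{\widetilde{C}}$ on $\mathbb L_\infty$ is an $\mathbb A$-submodule contained in a lattice, hence torsion; but $\mathbb A = \mathbb F_q(z)[\theta]$ acts via $\widetilde{C}$, and a torsion kernel element $\lambda$ would satisfy $\widetilde{C}_a(\lambda)=0$ for some $a\in\mathbb A\setminus\{0\}$, forcing $\lambda$ to be a torsion point of $\widetilde{C}$. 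The clean way to finish is: the torsion of $\widetilde{C}$ over $\bar{K}_\infty(z)$ is $\exp_{\widetilde{C}}(\widetilde{\pi}_{\widetilde{C}}\,\mathbb K)$ where the "period" of $\widetilde{C}$ is a suitable rescaling of $\widetilde{\pi}$ by a power of $z$, and one checks this period lies in no completion $L_v\otimes\mathbb K_\infty$ precisely because even in the non-real classical case the $z$-twist moves $\widetilde{\pi}$ outside; alternatively, and more robustly, I would argue that $\mathbb T(L_\infty)$ is $\tau$-stable with $\exp_{\widetilde{C}}$ mapping it isometrically (for the Gauss-type norm) to itself, so the kernel is $\{0\}$ on $\mathbb T(L_\infty)$, and then a specialization/continuity argument at $z$ transcendental over $K_\infty$ extends injectivity to all of $\mathbb L_\infty$.

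For the surjectivity statement, the plan is to combine the injectivity just obtained with a lattice/index computation, exactly paralleling Taelman's argument in the classical case (the structure underlying Lemma \ref{Lemma3} and the exact sequences of paragraph \ref{Taelman}). One shows that $\exp_{\widetilde{C}}$ maps $\mathbb L_\infty$ onto $\widetilde{C}(\mathbb L_\infty)$ with trivial kernel, that $\mathcal U_{\widetilde C} := \{x\in\mathbb L_\infty : \exp_{\widetilde C}(x)\in\mathbb O_L\}$ is an $\mathbb A$-lattice in $\mathbb L_\infty$ (the analogue of Taelman's Proposition 3, proved by the same compactness argument using that $\exp_{\widetilde C}$ is an isometry on $D_\infty\otimes\mathbb F_q(z)$ and entire), and hence that $\mathbb O_L + \exp_{\widetilde C}(\mathbb L_\infty)$ has finite $\mathbb A$-corank zero inside $\widetilde C(\mathbb L_\infty)$. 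The decisive extra input, compared with the classical case where $H(O_L)$ can be a nonzero finite $A$-module, is that here the "class module" $\widetilde H := \widetilde C(\mathbb L_\infty)/(\mathbb O_L + \exp_{\widetilde C}(\mathbb L_\infty))$ is a finite-dimensional $\mathbb F_q(z)$-vector space that is also $\mathbb A$-torsion, hence a finite $\mathbb A$-module; and then one kills it by a determinant (Fitting ideal) computation: by Lemma \ref{Lemma5}, $Z_{O_L}(1;z) = \prod_{\mathfrak P} [\tfrac{\mathbb O_L}{\mathfrak P\mathbb O_L}]_{\mathbb A} / [\widetilde C(\tfrac{\mathbb O_L}{\mathfrak P\mathbb O_L})]_{\mathbb A}$ lies in $\mathbb T(K_\infty)^\times$, i.e. is a unit $(1/\theta)$-adically, which via the $z$-analogue of Taelman's class formula $Z_{O_L}(1;z) = R_{\mathcal U_{\widetilde C}}\,[\widetilde H]_{\mathbb A}$ forces $[\widetilde H]_{\mathbb A}$ to be a unit in $\mathbb A$, i.e. $\widetilde H = \{0\}$.

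A cleaner route that avoids invoking the full $z$-deformed class formula is to prove surjectivity directly by reduction: it suffices to show $\mathbb O_L/(\mathbb O_L\cap\exp_{\widetilde C}(\mathbb L_\infty))$ maps onto $\widetilde C(\mathbb L_\infty)/\exp_{\widetilde C}(\mathbb L_\infty)$, and the latter quotient is $\widetilde C(\mathbb L_\infty)/\mathbb L_\infty$-torsion controlled by the unit-leading-coefficient phenomenon in Lemma \ref{Lemma5}: for each maximal ideal $\mathfrak P$, the polynomial $n_{L/K}(\mathfrak P) - z^{\deg_\theta n_{L/K}(\mathfrak P)}$ has the same $\theta$-degree as $n_{L/K}(\mathfrak P)$ with unit leading coefficient, so $[\widetilde C(\tfrac{\mathbb O_L}{\mathfrak P\mathbb O_L})]_{\mathbb A}$ and $[\tfrac{\mathbb O_L}{\mathfrak P\mathbb O_L}]_{\mathbb A}$ have equal degree, making the infinite product telescopically a $(1/\theta)$-adic unit; feeding this into the exact-sequence bookkeeping (the $z$-deformation of the sequence $0\to \tfrac{C(O_L)}{U(O_L)}\to\tfrac{C(L_\infty)}{\exp_C(L_\infty)}\to H(O_L)\to 0$) forces the deformed class module to vanish. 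I expect the main obstacle to be the injectivity half: one must verify carefully that the $z$-twist genuinely removes the obstruction coming from the places $v$ where $\widetilde\pi\in L_v$ in the non-real case — that is, that the period of $\widetilde C$ lies in no $L_v\otimes_{K_\infty}\mathbb K_\infty$ — and set up the valuation-theoretic framework on $\mathbb L_\infty$ (which is only a $\mathbb K_\infty$-algebra, not a field) rigorously enough that "$\exp_{\widetilde C}$ is an isometry on the integral-domain part" is a legitimate statement; the surjectivity half is then essentially a formal consequence of Lemma \ref{Lemma5} together with the lattice property of $\mathcal U_{\widetilde C}$.
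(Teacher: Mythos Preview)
Your injectivity argument is essentially the paper's: the paper observes that $\exp_{\widetilde C}$ is injective on $\mathbb T(L_\infty)$ (this is the $z$-filtration point you mention --- if $x\in\mathbb T(L_\infty)$ has lowest $z$-term $x_{i_0}z^{i_0}$ then so does $\exp_{\widetilde C}(x)$), and then extends to $\mathbb L_\infty$ by density of the $\mathbb F_q(z)$-span of $\mathbb T(L_\infty)$. Your alternative via the period of $\widetilde C$ is unnecessary and would require building a theory of the period over $\mathbb K_\infty$ that the paper does not develop.

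For surjectivity the paper simply invokes \cite{ATR}, Proposition~2, which gives a direct proof. Your route is genuinely different: you want to establish a class formula of the shape $Z_{O_L}(1;z)=R_{\mathcal U_{\widetilde C}}\,[\widetilde H]_{\mathbb A}$, then use Lemma~\ref{Lemma5} (that $Z_{O_L}(1;z)\in\mathbb T(K_\infty)^\times$) to force $[\widetilde H]_{\mathbb A}\in\mathbb A^\times$, hence $\widetilde H=0$. Be careful here about circularity. The class formula as it appears in this paper (just after Lemma~\ref{Lemma6}, citing \cite{DEM}) reads $Z_{O_L}(1;z)=R_{\mathcal U(\mathbb O_L)}$ with \emph{no} $[\widetilde H]_{\mathbb A}$ factor, precisely because it is stated after $\widetilde H=0$ has already been secured by the present lemma. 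To run your argument you must first prove, independently of Lemma~\ref{Lemma6}, a Taelman-style identity $Z=R\cdot[\widetilde H]$ over $\mathbb A$; this is doable by transplanting the trace-formula/nuclear-operator machinery of \cite{TAE2} to the $\mathbb F_q(z)$-setting, but it is a substantial detour. Your ``cleaner route'' paragraph does not actually avoid this: the step ``feeding this into the exact-sequence bookkeeping'' is exactly where the class formula enters, and Lemma~\ref{Lemma5} alone, together with the lattice property of $\mathcal U_{\widetilde C}$, does not compute $[\widetilde H]_{\mathbb A}$ without it. So your surjectivity plan is valid in principle but logically heavier than, and not equivalent to, the direct argument the paper cites.
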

\begin{proof}  Let us observe that $\exp_{\widetilde{C}}: \mathbb T(L_\infty)\rightarrow \mathbb T(L_\infty)$ is an injective morphism of $\mathbb F_q[z]$-modules. Since the $\mathbb F_q(z)$-algebra generated by $\mathbb T(L_\infty)$ is dense in $\mathbb L_\infty,$  we deduce that $\exp_{\widetilde{C}}$ is injective on $\mathbb L_\infty.$
The second assertion is a consequence of \cite{ATR}, Proposition 2.
\end{proof}
Let's set :
$$\mathcal U(\mathbb O_L)=\{ x\in \mathbb L_\infty, \exp_{\widetilde{C}}(x) \in \mathbb O_L\}.$$
Then $\mathcal U(\mathbb O_L)$ is an $\mathbb A$-lattice in $\mathbb L_\infty$ (see for example, \cite{DEM}, Proposition 2.8). Furthermore, we have (see for example, \cite{DEM}, Theorem 2.9) :
$$Z_{O_L}(1;z) =R_{\mathcal U(\mathbb O_L)}.$$
Let's set :
$$\mathcal U(O_L[z])=\{ x\in \mathbb T(L_\infty), \exp_{\widetilde{C}}(x) \in  O_L[z]\}.$$
We mention that we have identified  $O_L[z]$ with $\mathbb F_q[z]\otimes _{\mathbb F_q} O_L,$ and  $\mathbb O_L$ with  $\mathbb F_q(z) \otimes_{\mathbb F_q} O_L.$
\begin{lemma}\label{Lemma7}  The set $\mathcal U(O_L[z])$ is  finitely generated $A[z]$-module, and it generates $\mathcal U(\mathbb O_L)$ as a $\mathbb F_q(z)$-vector space.
\end{lemma}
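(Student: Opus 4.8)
The plan is to establish the two assertions of Lemma \ref{Lemma7} by relating $\mathcal U(O_L[z])$ to the $\infty$-adic Taelman unit module $\mathcal U(O_L)$ of the \emph{undeformed} Carlitz module via specialization at $z=1$, together with a direct analysis of the $z$-adic behaviour of $\exp_{\widetilde C}$. First I would record the elementary fact that $\mathbb T(L_\infty)=L\otimes_K\bF_q[z]((1/\theta))$ is a free $\bF_q[z]$-module in a natural topological sense, that $\exp_{\widetilde C}=\sum_{i\ge 0}\frac{z^i}{D_i}\tau^i$ maps $\mathbb T(L_\infty)$ into itself and is isometric on the relevant ``disc'' exactly as in paragraph \ref{Carlitz} (the extra factor $z^i$ only improves convergence, since $z$ is a unit in $\bF_q[z]((1/\theta))$ but has no pole at $\infty$). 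Then $\mathcal U(O_L[z])=\{x\in\mathbb T(L_\infty):\exp_{\widetilde C}(x)\in O_L[z]\}$ is an $\bF_q[z]$-submodule of $\mathbb T(L_\infty)$, and it is an $A[z]$-module because $\widetilde C$ has coefficients in $A[z]$ and $\exp_{\widetilde C}\circ(\text{mult.\ by }\theta)=\widetilde C_\theta\circ\exp_{\widetilde C}$ carries $O_L[z]$ into $O_L[z]$.

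Next I would prove the ``generates as $\bF_q(z)$-vector space'' part. Since $\mathcal U(\mathbb O_L)$ is an $\mathbb A$-lattice in $\mathbb L_\infty$ (quoted above) and $\mathbb L_\infty=\bF_q(z)\otimes_{\bF_q[z]}\mathbb T(L_\infty)$, it suffices to show that every element of $\mathcal U(\mathbb O_L)$ can be cleared of denominators in $z$, i.e.\ for $x\in\mathcal U(\mathbb O_L)$ there is $0\ne f\in\bF_q[z]$ with $fx\in\mathcal U(O_L[z])$. Given $x\in\mathbb L_\infty$ with $\exp_{\widetilde C}(x)\in\mathbb O_L=\bF_q(z)\otimes_{\bF_q}O_L$, pick $f$ clearing the denominators of $x$ (so $fx\in\mathbb T(L_\infty)$); then $\exp_{\widetilde C}(fx)=\exp_{\widetilde C}(x)$ only if $f$ is chosen $A[z]$-linearly — more precisely one uses that $f$ acts via the ordinary $\bF_q[z]$-module structure, and that $\exp_{\widetilde C}$ is $\bF_q(z)$-linear but \emph{not} $\bF_q(z)[\theta]$-linear, so instead one should clear denominators of $x$ and of $\exp_{\widetilde C}(x)$ simultaneously and invoke continuity/isometry to conclude $\exp_{\widetilde C}(fx)\in O_L[z]$ once both sit in the appropriate integral lattice. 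This gives density, hence $\mathcal U(O_L[z])\otimes_{\bF_q[z]}\bF_q(z)=\mathcal U(\mathbb O_L)$.

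For finite generation over $A[z]$, the cleanest route is to compare with Lemma \ref{Lemma6} and the short exact sequence structure: $\exp_{\widetilde C}$ is injective on $\mathbb L_\infty$ (and on $\mathbb T(L_\infty)$), and $\mathbb T(L_\infty)=O_L[z]+\exp_{\widetilde C}(\mathbb T(L_\infty))$ should follow from the analogue of \cite{ATR}, Proposition 2 over $\bF_q[z]$ (the argument there produces the needed representatives inside the integral model). Granting this, $\mathcal U(O_L[z])=\exp_{\widetilde C}^{-1}(O_L[z])$ fits into
$$0\to\mathcal U(O_L[z])\to\mathbb T(L_\infty)\xrightarrow{\exp_{\widetilde C}}\mathbb T(L_\infty)/O_L[z]\to 0,$$
and one identifies $\mathcal U(O_L[z])$ with a discrete cocompact $\bF_q[z]$-submodule of $\mathbb T(L_\infty)$ via the isometry property: the ``unit ball'' part of $\mathbb T(L_\infty)$ on which $\exp_{\widetilde C}$ is an isometric bijection meets $\mathcal U(O_L[z])$ in a finite $\bF_q[z]$-module (it injects into a quotient of $O_L/(\text{integral part})$), so $\mathcal U(O_L[z])$ is, up to a finite piece, a free $A[z]$-module generated by finitely many elements obtained by lifting an $A$-basis of $O_L$ through $\Log_{\widetilde C}$. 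Concretely, $\Log_{\widetilde C}$ converges on the relevant disc, and $\Log_{\widetilde C}(\omega_1),\dots,\Log_{\widetilde C}(\omega_n)$ for an $A$-basis $\omega_1,\dots,\omega_n$ of $O_L$ together with finitely many torsion-type generators span $\mathcal U(O_L[z])$ over $A[z]$.

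I expect the main obstacle to be the finite-generation claim: unlike the $\bF_q(z)$-coefficient statement (which is ``soft'', following from the lattice property of $\mathcal U(\mathbb O_L)$ by clearing denominators), finiteness over $A[z]$ requires genuinely controlling the $z$-integral structure — i.e.\ showing that the ``small'' elements of $\mathcal U(O_L[z])$ form a finite $\bF_q[z]$-module rather than merely a finitely generated $\bF_q(z)$-module. The key technical input is the isometry of $\exp_{\widetilde C}$ and $\Log_{\widetilde C}$ on $\mathbb T(L_\infty)$-discs (adapting paragraph \ref{Carlitz}) combined with the decomposition $\mathbb T(L_\infty)=O_L[z]+\exp_{\widetilde C}(\mathbb T(L_\infty))$; once these are in place, finite generation follows by the same discreteness-plus-cocompactness argument Taelman uses in \cite{TAE2}, Proposition 3, now carried out over the coefficient ring $\bF_q[z]$ instead of $\bF_q$.
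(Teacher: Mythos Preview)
Your plan is workable in spirit but both longer and more muddled than necessary; the paper's proof is a three-line reduction to a result already in the literature. The key observation you are missing is the \emph{equality}
\[
\mathcal U(O_L[z])=\mathcal U(\mathbb O_L)\cap \mathbb T(L_\infty),
\]
which follows immediately from $\mathbb O_L\cap \mathbb T(L_\infty)=O_L[z]$ together with the fact that $\exp_{\widetilde C}$ maps $\mathbb T(L_\infty)$ into itself. Once this is noted, both assertions of the lemma are exactly the content of \cite{ATR}, Proposition~1 (applied to $\mathcal U(\mathbb O_L)$), and nothing further needs to be argued.

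Your handling of the ``generates'' part is unnecessarily tangled. You write that $\exp_{\widetilde C}(fx)=\exp_{\widetilde C}(x)$ and then backtrack; in fact $\exp_{\widetilde C}$ is $\bF_q(z)$-linear (this is how $\tau$ is defined on $\mathbb K_\infty$), so $\exp_{\widetilde C}(fx)=f\,\exp_{\widetilde C}(x)$ for $f\in\bF_q[z]$, and a single $f$ clearing the denominators of $x\in\mathbb L_\infty$ already forces $\exp_{\widetilde C}(fx)\in\mathbb O_L\cap\mathbb T(L_\infty)=O_L[z]$. There is no need to clear denominators ``simultaneously'' or to invoke isometry here. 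For finite generation, your Taelman-style discreteness/cocompactness argument over $\bF_q[z]$ is essentially what \cite{ATR}, Proposition~1 carries out; as written it is a sketch rather than a proof, and the decomposition $\mathbb T(L_\infty)=O_L[z]+\exp_{\widetilde C}(\mathbb T(L_\infty))$ you assume is itself nontrivial to establish at the $\bF_q[z]$-integral level. So your route is not wrong, but it reproves an existing proposition instead of citing it, and the write-up of the easy half contains an avoidable confusion.
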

\begin{proof}  We have :
$$\mathbb O_L \cap \mathbb T(L_\infty) =O_L[z].$$
Thus :
$$\mathcal U(\mathbb O_L)\cap \mathbb T(L_\infty)\subset \mathcal U(O_L[z]).$$
By \cite{ATR}, Proposition 1, $\mathcal U(\mathbb O_L)\cap \mathbb T(L_\infty)$ is a finitely generated $A[z]$-module and generates $\mathcal U(\mathbb O_L)$ as a $\mathbb F_q(z)$-vector space. The Lemma follows.
\end{proof}
Let's set :
$$\mathcal U_{St}(O_L)=\mathcal U(O_L[z])\mid_{z=1}\subset \mathcal U(O_L)\subset  L_\infty.$$
Then, by \cite{ATR}, section 2.5, we have that $\frac{\mathcal U(O_L)}{\mathcal U_{St}(O_L)}$ is a finite $A$-module and :
$$[\frac{\mathcal U(O_L)}{\mathcal U_{St}(O_L)}]_A=[H(O_L)]_A.$$
We set :
$$U(O_L[z])=\exp_{\widetilde{C}}(\mathcal U(O_L[z]))\subset O_L[z].$$

\begin{lemma}\label{Lemma8} ${}$\par
\noindent 1)  $\mathcal U(O_L [z])\subset L[[z]],$\par
\noindent 2) $\Log_{\widetilde{C}}$  converges on $U(O_L[z])$ and induces an $\mathbb F_q[z]$-isomorphism  $\log_{\widetilde{C}}: U(O_L[z])\rightarrow \mathcal U(O_L[z]).$\par
\end{lemma}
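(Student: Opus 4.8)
The plan is to prove the two assertions in order, using convergence properties of $\exp_{\widetilde C}$ and $\Log_{\widetilde C}$ that mirror the $\infty$-adic ones in paragraph \ref{Carlitz}, but now working over the Tate-algebra-type ring $\mathbb T(L_\infty)=L\otimes_K\mathbb F_q[z]((\frac1\theta))$. For assertion (1), I would start from the definition $\mathcal U(O_L[z])=\{x\in\mathbb T(L_\infty):\exp_{\widetilde C}(x)\in O_L[z]\}$ and argue that any such $x$ must actually lie in $L[[z]]$. The key point is that $\exp_{\widetilde C}=\sum_{i\ge 0}\frac{z^i}{D_i}\tau^i$ raises the $z$-adic valuation: applying $\exp_{\widetilde C}$ to an element with a pole in $\frac1\theta$ of a fixed order produces, in each $z$-degree, contributions coming from $\tau^i$ which blow up the $\infty$-adic size unless the $\frac1\theta$-part of $x$ is small. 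More precisely, I would decompose $x=\sum_{k\ge k_0}x_k z^k$ with $x_k\in L_\infty$, use the estimate $v_\infty(\tau^i(y)/D_i)=q^i(v_\infty(y)+i)$ componentwise along $S_\infty(L)$, and show inductively in $z$-degree that $\exp_{\widetilde C}(x)\in O_L[z]$ forces each $x_k$ to lie in the region $D_\infty\otimes$ (integral part), hence in $L$ rather than merely in $L_\infty$; the tameness of $O_L[z]\subset L[[z]]$ (no negative $\frac1\theta$-powers, polynomial in $z$) is what kills the infinite tail. Equivalently, one can invoke that $U(O_L[z])=\exp_{\widetilde C}(\mathcal U(O_L[z]))\subset O_L[z]\subset L[[z]]$ is contained in the domain where $\Log_{\widetilde C}$ converges, which feeds directly into (2).

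For assertion (2), I would first establish convergence of $\Log_{\widetilde C}=\sum_{i\ge 0}\frac{z^i}{L_i}\tau^i$ on $U(O_L[z])$. Since $U(O_L[z])\subset O_L[z]$, every element, viewed in each $L_v$, lies in the integral closure of $A$ and in particular has $v_\infty\ge 0$ at each infinite place (after the embedding $\iota_v$); combined with the estimate $v_\infty(x^{q^i}/L_i)=q^i(v_\infty(x)+\frac{q}{q-1})-\frac q{q-1}$, which tends to $+\infty$ coefficientwise in $z$-degree, the series converges $z$-adically in $L[[z]]$. Then the identities $\exp_{\widetilde C}\Log_{\widetilde C}=\Log_{\widetilde C}\exp_{\widetilde C}=\mathrm{id}$ (valid formally in $1+\mathbb K_\infty\{\{\tau\}\}\tau$, hence on any element where both sides converge) show that $\Log_{\widetilde C}$ restricted to $U(O_L[z])$ is inverse to $\exp_{\widetilde C}:\mathcal U(O_L[z])\to U(O_L[z])$. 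This gives a bijection; $\mathbb F_q[z]$-linearity is immediate since $\exp_{\widetilde C}$ and $\Log_{\widetilde C}$ are $\mathbb F_q[z]$-linear (the coefficients involve only powers of $z$ and elements of $K_\infty$, and $\tau$ is $\mathbb F_q(z)$-linear). Hence $\log_{\widetilde C}:=\Log_{\widetilde C}|_{U(O_L[z])}$ is the desired $\mathbb F_q[z]$-isomorphism.

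The main obstacle I expect is the convergence bookkeeping for assertion (1): one has to track, simultaneously, the $z$-adic valuation (which $\exp_{\widetilde C}$ increases because of the $z^i$ factors) and the $\infty$-adic valuations at the finitely many places $v\in S_\infty(L)$ (which $\tau^i$ distorts), and conclude that integrality of $\exp_{\widetilde C}(x)$ over $O_L[z]$ forces $x$ to have no $\frac1\theta$-tail at all, i.e. $x\in L[[z]]$ rather than $x\in\mathbb T(L_\infty)=L\otimes_K\mathbb F_q[z]((\frac1\theta))$. The cleanest route is probably to avoid a direct fixed-point/Newton-polygon argument and instead deduce (1) from the already-cited structural results: $\mathcal U(\mathbb O_L)\cap\mathbb T(L_\infty)\subset\mathcal U(O_L[z])$ from Lemma \ref{Lemma7}, together with the analogous containment $\mathcal U(O_L[z])\subset L[[z]]$ coming from the fact (paragraph \ref{Carlitz}, applied with $z$-coefficients) that $\exp_{\widetilde C}$ and $\Log_{\widetilde C}$ are isometric on the "integral" region $D_\infty$; then (2) follows formally as above. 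If a self-contained argument is wanted, the induction on $z$-degree sketched above does the job but is the technical heart of the lemma.
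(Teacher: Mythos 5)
Your proposal goes wrong on a conceptual point in part (1). You try to establish $\mathcal U(O_L[z])\subset L[[z]]$ by $\infty$-adic bookkeeping: you decompose $x=\sum_k x_kz^k$ with $x_k\in L_\infty$, run $v_\infty$-estimates of the form $v_\infty(\tau^i(y)/D_i)=q^i(v_\infty(y)+i)$, and try to argue "$x_k$ lands in $D_\infty$, hence in $L$." But $D_\infty$ is a $v_\infty$-disc inside $\bar K_\infty$; membership in $D_\infty$ (or in any region cut out by $\infty$-adic valuation) says nothing about algebraicity over $K$. The inclusion $\mathcal U(O_L[z])\subset L[[z]]$ is an assertion that the $z$-coefficients $x_k$ lie in the finite extension $L$ rather than merely in the completed algebra $L_\infty$, and $v_\infty$-estimates cannot see that distinction: $L_\infty\cap D_\infty$ is much bigger than $L\cap D_\infty$. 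The same confusion appears in your phrase "no negative $\frac1\theta$-powers": elements of $L$ embedded diagonally in $L_\infty$ certainly have negative $\frac1\theta$-powers (e.g.\ $\theta$), so "lying in $L[[z]]$" is not a condition on the $\frac1\theta$-tail at all.

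The paper's proof is much shorter and entirely $z$-adic, and this is the idea your proposal circles around without quite landing on. The operators $\exp_{\widetilde C}=\sum_i\frac{z^i}{D_i}\tau^i$ and $\Log_{\widetilde C}=\sum_i\frac{z^i}{L_i}\tau^i$ each raise $z$-degree by at least $i$ at the $i$-th term, so they converge $z$-adically and define mutually inverse $A[[z]]$-module automorphisms of all of $L_\infty[[z]]$ (and of $L[[z]]$, since $\frac{1}{D_i},\frac{1}{L_i}\in K$ and $\tau$ preserves $L$) — no $\infty$-adic estimate enters at any point, and no induction is needed. Given this, part (1) is one line: if $x\in\mathcal U(O_L[z])\subset\mathbb T(L_\infty)\subset L_\infty[[z]]$ then $x=\Log_{\widetilde C}(\exp_{\widetilde C}(x))$ with $\exp_{\widetilde C}(x)\in O_L[z]\subset L[[z]]$, and $\Log_{\widetilde C}(L[[z]])\subset L[[z]]$, so $x\in L[[z]]$. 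Part (2) then reads off immediately: $\Log_{\widetilde C}$ is defined on $U(O_L[z])\subset O_L[z]\subset L[[z]]$ by the $z$-adic remark, and since $\exp_{\widetilde C}\Log_{\widetilde C}=\Log_{\widetilde C}\exp_{\widetilde C}=\mathrm{id}$ on $L_\infty[[z]]$, its restriction to $U(O_L[z])=\exp_{\widetilde C}(\mathcal U(O_L[z]))$ is the $\mathbb F_q[z]$-linear inverse of $\exp_{\widetilde C}\colon\mathcal U(O_L[z])\to U(O_L[z])$. Your part-(2) sketch correctly invokes the mutual-inverse identity, but attaching $\infty$-adic convergence estimates to it misattributes where the convergence is coming from; and citing "isometry on $D_\infty$" for part (1) is simply the wrong tool for an algebraicity statement.
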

\begin{proof}   Let's recall that $\mathbb T(L_\infty)$ is the subring of $L_\infty[[z]]$ consisting of power series $\sum_{i\geq 0} x_i z^i, x_i \in L_\infty,$ such that $\lim_{i\rightarrow +\infty} x_i =0.$ Let's observe that $\exp_{\widetilde{C}}: L_\infty [[z]] \rightarrow \widetilde{C}(L_\infty [[z]])$ is an isomorphism of $A[[z]]$-modules, and for all $F\in L_\infty [[z]]:$
$$F= \exp_{\widetilde{C}}(\Log_{\widetilde{C}}(F)) = \Log_{\widetilde{C}}(\exp_{\widetilde{C}}(F)).$$
We have :
$$\mathcal U(O_L [z])=\log_{\widetilde{C}}(U(O_L[z]))\subset L[[z]]\cap \mathbb T(L_\infty).$$
The Lemma follows.
\end{proof}

Let $n=[L:K],$ we have  a refined  version of the class formula  :
  
  \begin{theorem}\label{Theorem1} Let $a_1, \ldots, a_n \in U(O_L[z])
  \subset O_L[z], $ and let $ M \in M_n (K[[z]])\cap M_n(\mathbb K_\infty)\subset M_n(\mathbb T(K_\infty))$ be the  $n\times n$ matrix obtained by  expressing $\log_{\widetilde{C}}(a_i)$ in a fixed $A$-basis of $O_L,$ we have :
 $$\det (M) =\beta  Z_{O_L}(1;z),$$
 where  $ \beta\in  A[z].$ Furthermore, if $N=\sum_{i=1}^n \log_{\widetilde{C}}(a_i)\mid_{z=1}A$ is an $A$-lattice in $L_\infty,$ then $ \frac{\mathcal U_{St}(O_L)}{N} $ is a finite $A$-module and ${\beta (1) }{[\frac{\mathcal U_{St}(O_L)}{N}]_A}^{-1}\in \mathbb F_q^\times.$     \end{theorem}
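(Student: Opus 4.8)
The plan is to reduce the statement to the $z$-deformed class formula $Z_{O_L}(1;z)=R_{\mathcal U(\mathbb O_L)}$ recalled above, together with the comparison between $\mathcal U(\mathbb O_L)$ and the $A[z]$-lattice $\mathcal U(O_L[z])$ from Lemma~\ref{Lemma7}. First I would reformulate the hypothesis: by Lemma~\ref{Lemma8}, the $a_i$ lie in $U(O_L[z])$ and $\log_{\widetilde C}(a_i)\in\mathcal U(O_L[z])\subset L[[z]]$, so the matrix $M\in M_n(K[[z]])\cap M_n(\mathbb K_\infty)$ makes sense and, by Lemma~\ref{Lemma7}, the columns of $M$ are $\mathbb F_q(z)$-linear combinations of a fixed $\mathbb A$-basis of $\mathcal U(\mathbb O_L)$. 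Writing $\Sigma$ for the matrix of that basis in the chosen $A$-basis of $O_L$, we get $M=\Sigma B$ for some $B\in M_n(\mathbb F_q(z))$ with entries actually in $\mathbb T(K_\infty)$ since $M$ and $\Sigma$ have entries there and $\Sigma$ is invertible over $\mathbb K_\infty$; hence $B\in M_n(\mathbb F_q[z])$ because $\mathbb T(K_\infty)\cap\mathbb F_q(z)=\mathbb F_q[z]$. Taking determinants, $\det M=\det(\Sigma)\det(B)$, and since $R_{\mathcal U(\mathbb O_L)}=\det(\Sigma)/\sgn(\det\Sigma)$ and $Z_{O_L}(1;z)=R_{\mathcal U(\mathbb O_L)}$, we obtain $\det M=\beta\,Z_{O_L}(1;z)$ with $\beta=\sgn(\det\Sigma)\det(B)\in\mathbb F_q[z]$. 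One still has to see $\beta\in A[z]$ and not merely $\mathbb F_q[z]$: this is automatic since $\mathbb F_q[z]\subset A[z]$, but I would also note $\det M\in K[[z]]\cap\mathbb K_\infty$ and $Z_{O_L}(1;z)\in\mathbb T(K_\infty)^\times$ (Lemma~\ref{Lemma5}), so $\beta$ is the ratio of two elements of $K[[z]]\cap\mathbb T(K_\infty)$, confirming $\beta\in\mathbb F_q[z]\subset A[z]$.

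For the refined part, specialize at $z=1$. By definition $\mathcal U_{St}(O_L)=\mathcal U(O_L[z])\mid_{z=1}$, and the columns of $M\mid_{z=1}$ are exactly the vectors $\log_{\widetilde C}(a_i)\mid_{z=1}$ expressed in the $A$-basis of $O_L$; thus $N=\sum_i\log_{\widetilde C}(a_i)\mid_{z=1}A$ is the $A$-submodule of $\mathcal U_{St}(O_L)\subset L_\infty$ generated by these columns, i.e. the image of $M\mid_{z=1}$. Assuming $N$ is an $A$-lattice in $L_\infty$ (equivalently $\det(M\mid_{z=1})\neq 0$), the index $[\mathcal U_{St}(O_L):N]$ is finite and $[\frac{\mathcal U_{St}(O_L)}{N}]_A$ equals the monic generator of $\det(\Sigma_{St}^{-1}M\mid_{z=1})\cdot A$, where $\Sigma_{St}$ is the matrix of an $A$-basis of $\mathcal U_{St}(O_L)$ in the $A$-basis of $O_L$. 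From $M=\Sigma B$ and the relation $[\frac{\mathcal U(O_L)}{\mathcal U_{St}(O_L)}]_A=[H(O_L)]_A$ together with $Z_{O_L}(1;1)=\zeta_{O_L}(1)=R_L[H(O_L)]_A$ (Taelman's class formula), I would track the specialization $B\mid_{z=1}=\Sigma^{-1}\Sigma_{St}\cdot(\Sigma_{St}^{-1}M\mid_{z=1})$ and take monic generators of Fitting ideals on both sides to conclude $\beta(1)=\sgn(\det\Sigma_{St})\cdot(\text{unit})\cdot[\frac{\mathcal U_{St}(O_L)}{N}]_A$ in $\mathbb F_q^\times\cdot A_+$; since $\beta(1)\in A$ and $[\frac{\mathcal U_{St}(O_L)}{N}]_A$ is monic, the ratio $\beta(1)[\frac{\mathcal U_{St}(O_L)}{N}]_A^{-1}$ lies in $\mathbb F_q^\times$.

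The main obstacle I anticipate is the bookkeeping of leading coefficients and signs when passing from $\det$-identities over $\mathbb K_\infty$ to statements about monic Fitting-ideal generators over $A$ — in particular making sure that no stray nonconstant polynomial factor in $z$ survives in $\beta$ other than the one accounted for by the $H(O_L)$-index, and that the specialization $z\mapsto 1$ really does send $Z_{O_L}(1;z)$ to a nonzero element so that everything remains well-defined. Concretely, the delicate point is the equality, up to $\mathbb F_q^\times$, between $\beta(1)$ and $[\mathcal U_{St}(O_L)/N]_A$, which should follow from the change-of-lattice formula for Fitting ideals (cf. the argument behind $[\mathcal U(O_L)/\mathcal U_{St}(O_L)]_A=[H(O_L)]_A$ in \cite{ATR}, section 2.5) applied to the chain $N\subset\mathcal U_{St}(O_L)\subset\mathcal U(O_L)$ and compared with the $z$-adic identity $\det M=\beta Z_{O_L}(1;z)$ evaluated at $z=1$; I would isolate this as the one lemma-level computation and cite \cite{ATR} for the analogous index computation to keep the argument clean.
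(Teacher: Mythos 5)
Your overall strategy matches the paper's: choose an $\mathbb A$-basis $u_1,\ldots,u_n$ of $\mathcal U(\mathbb O_L)$ contained in $\mathcal U(O_L[z])$ (Lemma~\ref{Lemma7}), write $M=\Sigma B$ where $\Sigma$ is the matrix of the $u_i$ in the fixed $A$-basis of $O_L$, invoke the class formula $Z_{O_L}(1;z)=R_{\mathcal U(\mathbb O_L)}=\det\Sigma/\sgn(\det\Sigma)$, and then specialize at $z=1$ using Taelman's formula together with $[\mathcal U(O_L)/\mathcal U_{St}(O_L)]_A=[H(O_L)]_A$. However, your step asserting $B\in M_n(\mathbb F_q(z))$, and hence $\beta\in\mathbb F_q[z]$ via $\mathbb T(K_\infty)\cap\mathbb F_q(z)=\mathbb F_q[z]$, is wrong. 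Each $\log_{\widetilde C}(a_i)$ lies in the free $\mathbb A$-module $\mathcal U(\mathbb O_L)=\oplus_j u_j\mathbb A$, so the entries of the change-of-basis matrix $B$ live in $\mathbb A=\mathbb F_q(z)[\theta]$, not in $\mathbb F_q(z)$. The intersection one actually needs is $\mathbb A\cap\mathbb T(K_\infty)=A[z]$, which is how $\beta$ lands in $A[z]$. Indeed your intermediate conclusion $\beta\in\mathbb F_q[z]$ is false in general: take $L=K$ and $a_1=\widetilde C_\theta(1)=\theta+z\in U(A[z])$; then $\log_{\widetilde C}(a_1)=\theta\,\Log_{\widetilde C}(1)$, the $1\times 1$ matrix $M$ equals $\theta\,\Log_{\widetilde C}(1)$, and since $Z_A(1;z)=\Log_{\widetilde C}(1)$ one gets $\beta=\theta\in A[z]\setminus\mathbb F_q[z]$. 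Apart from this, your reduction to the $z$-deformed class formula, and the second part tracking Fitting ideals through the chain $N\subset\mathcal U_{St}(O_L)\subset\mathcal U(O_L)$ and specializing $\det M=\beta\,Z_{O_L}(1;z)$ at $z=1$, are sound and run along the same lines as the paper's proof.
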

 \begin{proof}
By Lemma \ref{Lemma7}, there exist $u_1, \ldots, u_n \in \mathcal U(O_L[z])$ such that  :
 $$\mathcal U(\mathbb O_L)= \oplus_{i=1}^nu_i \mathbb A.$$
 We set :
$$b_i =\exp_{\widetilde{C}}(u_i)\in O_L[z].$$
Then :
$$u_i=\Log_{\widetilde{C}}(b_i) \in L[[z]]\cap \mathbb T(L_\infty).$$
Thus  :
  $$ \oplus_{i=1}^{n}u_i K[[z]]\subset L[[z]].$$
Let $\sigma : \mathbb L_\infty \rightarrow \mathbb L_\infty $  the $\mathbb K_\infty$-linear map  that sends a fixed $A$-basis of $O_L$ to  $(u_1, \ldots , u_n).$   Let $M'$ be the matrix of $\sigma$  relative to the $A$-basis of $O_L.$ Then :
  $$M'\in M_n(K[[z]])\cap M_n(\mathbb K_\infty)\subset M_n (\mathbb T(K_\infty)) .$$
  By the class formula for $\widetilde{C}/\mathbb O_L,$ , we have :
   $$\det (M') =sgn(\det M') \, Z_{O_L}(1;z) ,$$
   where  $\alpha := sgn(\det(M'))\in \mathbb F_q[z] \setminus \{0\}.$ 
  
  Now let $v_1, \cdots , v_n \in \mathcal {U}(O_L[z])$ such that  $N= \sum_{i=1}^nv_i\mid_{z=1} A$ is an $A$-lattice in $L_\infty.$  
   Then $v_1, \ldots, v_n$ are $\mathbb A$-linearly independent.  Furthermore $\frac{\mathcal U(O_L)}{N}$ is a finite $A$-module.
  Let's set this time :
  $$a_j=\exp_{\widetilde{C}}(v_j)\in O_L[z].$$
  Let $M\in M_n(K[[z]])\cap M_n(\mathbb K_\infty)\subset M_n (\mathbb T(K_\infty)) $ be the  $n\times n$ matrix obtained by  expressing $v_j= \log_{\widetilde{C}}(a_j)$ in the same fixed $A$-basis of $O_L.$  Then there exists $\beta' \in \mathbb A$ such that :
  $$\det(M) =\beta' \det(M') = \beta' \alpha Z_{O_L}(1;z).$$
  But $Z_{O_L}(1;z)\in \mathbb T(K_\infty)^\times.$ Thus $\beta = \beta' \alpha \in A[z],$ and, Taelman's class formula implies that $\frac{\beta(1) }{[\frac{\mathcal U_{St}(O_L)}{N}]_A}\in \mathbb F_q^\times.$
  \end{proof}
  
  \begin{example} Let's finish this paragraph by a basic example : $L=K.$ We have:
$$\mathbb T(K_\infty)=A[z]\oplus \frac{1}{\theta}\mathbb F_q[z][[\frac{1}{\theta}]].$$
Observe that :
$$\Log_{\widetilde{C}}(1)\in 1+\frac{1}{\theta}\mathbb F_q[z][[\frac{1}{\theta}]].$$
Thus :
$$\mathbb T(K_\infty)= \Log_{\widetilde{C}}(1)A[z]\oplus  \frac{1}{\theta} \mathbb F_q[z][[\frac{1}{\theta}]].$$
We obtain :
$$\exp_{\widetilde{C}}(\mathbb T(K_\infty)) =U(A[z])\oplus   \frac{1}{\theta} \mathbb F_q[z][[\frac{1}{\theta}]],$$
and $U(A[z])$ is the sub-$A[z]$-module of $\widetilde{C}(A[z])$ generated by $1.$ Note that $U(A[z])$ is a free $A[z]$-module of rank one. Since $sgn(\Log_{\widetilde{C}}(1))=1,$ we get :
$$Z_A(1;z)=\Log_{\widetilde{C}}(1).$$
\end{example}


\section{The $P$-adic Carlitz module}

\subsection{$P$-adic $z$-deformation}\label{P-z-deformation}
The content of this paragraph is inspired by   the concept  of  formal Drinfeld modules (see for example \cite{ROS}).\par
Let $P$ be a prime of $A$ of degree $d.$  Let $z$ be an indeterminate over $K_P=\mathbb F_{q^d}((P)).$ We set : $\mathbb A_P= \mathbb F_{q^d}(z)[[P]],$ $\mathbb K_P= \mathbb F _{q^d} (z)((P)),$ $\mathbb T(K_P)= \mathbb F_{q^d}[z]((P)).$ We denote by $v_P$ the $P$-adic valuation  on $\mathbb K_P,$ i.e. :
$$v_P(\sum_{i\geq i_0} \alpha_i P^i)= i_0, \, i_0\in \mathbb Z, \alpha_i \in \mathbb F_{q^d}(z),\alpha_{i_0}\not =0.$$ 
Let $\tau : \mathbb  K_P \rightarrow \mathbb K_P$ be the continuous morphism of $\mathbb F_q(z)$-algebras such that $\forall x\in K_P, \tau(x)=x^q.$ 
Let  $\mathbb K_P\{\{\tau \}\}$ and $\mathbb K_P\{\tau\}$ be the non commutative power series and polynomials in $\tau$ with coefficients in $\mathbb K_P.$ 
If  $ a\in A,$ we set  :
$$\widetilde{C}_a=\sum_{i\geq 0} [a,i]z^i\tau ^i, [a,i]\in A, \, \forall  i>\deg_\theta a, [a,i]=0.$$
Recall that we have the Goss relations (\cite{GOS}, Proposition 3.3.10) :
$$\forall a\in A, [a,0]=a, \forall  i \geq 1,[a,i]=\frac{[a,i-1]^q-[a,i-1]}{\theta^{q^i}-\theta}.$$
 
\begin{lemma}\label{Lemma9} The map $\widetilde{C}$ extends uniquely into a continuous morphism of $\mathbb F_q(z)$-algebras $\widetilde{C}: \mathbb A_P \rightarrow \mathbb A_P\{\{\tau\}\}.$ In particular the Carlitz module extends uniquely into a continuous morphism of $\mathbb F_q$-algebras $C:  A_P \rightarrow A_P\{\{\tau\}\}.$
\end{lemma}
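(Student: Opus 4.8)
The statement to be proved is Lemma \ref{Lemma9}: that $\widetilde{C}:A\to A_P\{\{\tau\}\}$ (with $\widetilde{C}_a=\sum_i[a,i]z^i\tau^i$) extends uniquely to a continuous $\mathbb F_q(z)$-algebra morphism $\mathbb A_P\to\mathbb A_P\{\{\tau\}\}$, and the analogous (un-deformed) statement for $C$. The plan is to reduce everything to a single convergence estimate: for $a\in A$, control $v_P([a,i])$ as $i\to\infty$, where $v_P$ is normalized so that $v_P(P)=1$ and $v_P(\theta^{q^i}-\theta)\geq 0$ with equality unless $P\mid\theta^{q^i}-\theta$. From the Goss recursion $[a,i]=\dfrac{[a,i-1]^q-[a,i-1]}{\theta^{q^i}-\theta}$ one sees inductively (exactly as in the proof of Lemma \ref{Lemma3}, where $a=P$ was treated) that the denominators $\theta^{q^i}-\theta$ are $P$-adic units for $i$ not divisible by $d=\deg_\theta P$, and have bounded $P$-valuation in general since $v_P(\theta^{q^i}-\theta)$ is the $P$-adic valuation of a product of Artin–Schreier-type factors; in fact $\theta^{q^i}-\theta$ is a unit times a product of the $P$'s dividing it, and only finitely many primes of bounded degree divide $\theta^{q^i}-\theta$, so $v_P([a,i])$ stays bounded below, hence $v_P([a,i]z^i\tau^i)=v_P([a,i])\to$ a bounded-below quantity — no, more is needed: one wants $\widetilde{C}_a$ to have coefficients in $\mathbb A_P=\mathbb F_{q^d}(z)[[P]]$, i.e.\ $v_P([a,i])\geq 0$ for all but finitely many $i$ (with no growth condition on $i$ needed, since $\mathbb A_P\{\{\tau\}\}$ allows arbitrary power series in $\tau$). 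So the key claim is simply $v_P([a,i])\geq 0$ for $i$ large, which follows from the recursion by induction once one checks that $v_P([a,0])=v_P(a)\geq 0$ trivially for $a\in A$, and that dividing by $\theta^{q^i}-\theta$ cannot decrease $P$-valuation infinitely often — indeed $v_P(\theta^{q^i}-\theta)=0$ whenever $d\nmid i$, and the at-most-logarithmically-many indices with $d\mid i$ each contribute a bounded amount, so after finitely many steps the valuations are $\geq 0$ and stay there.

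Granting this estimate, the extension is built as follows. First I would note that $A$ is dense in $A_P=\mathbb F_{q^d}[[P]]$ and $A[z]$ is dense in $\mathbb A_P=\mathbb F_{q^d}(z)[[P]]$ for the $P$-adic topology (indeed $\mathbb F_{q^d}(z)[[P]]$ is the $P$-adic completion of $\mathbb F_{q^d}(z)[P]$, and $\mathbb F_q(z)[\theta]\hookrightarrow\mathbb F_{q^d}(z)[[P]]$ has dense image since $\theta$ generates $\mathbb F_{q^d}((P))$ over $\mathbb F_{q^d}$ topologically). The target ring $\mathbb A_P\{\{\tau\}\}$ carries the topology in which $\sum c_i\tau^i\to 0$ iff $\min_i v_P(c_i)\to+\infty$ over any given finite truncation — more precisely the product topology on coefficients, each $\mathbb A_P$ being $P$-adically complete; this makes $\mathbb A_P\{\{\tau\}\}$ a complete topological ring. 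By the estimate above, $a\mapsto\widetilde{C}_a=\sum_i[a,i]z^i\tau^i$ is a well-defined map $A\to\mathbb A_P\{\{\tau\}\}$, it is $\mathbb F_q$-linear in $a$, multiplicative (because the Goss relations are exactly the condition that $a\mapsto\sum[a,i]\tau^i$ is a ring homomorphism $A\to K\{\tau\}$, which is preserved after the substitution $\tau\mapsto z\tau$ and base change), and it is continuous: if $a\to 0$ $P$-adically in $A$, then $v_P([a,i])\geq v_P(a)$-ish grows, giving $\widetilde{C}_a\to 0$. A continuous homomorphism from a dense subring into a complete Hausdorff topological ring extends uniquely; applying this to $A[z]\hookrightarrow\mathbb A_P$ (extending $\mathbb F_q(z)$-linearly first, then $P$-adically) yields the desired $\widetilde{C}:\mathbb A_P\to\mathbb A_P\{\{\tau\}\}$, and specializing $z=1$ (or just repeating the argument verbatim with $z$ deleted) gives $C:A_P\to A_P\{\{\tau\}\}$.

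The main obstacle is making the continuity/convergence bookkeeping precise: one must (i) pin down the topology on $\mathbb A_P\{\{\tau\}\}$ making it a complete topological $\mathbb F_q(z)$-algebra, checking that multiplication is continuous despite the twisting rule $\tau x=x^q\tau$ (this is where $v_P(x^q)=qv_P(x)$ helps — multiplication only improves valuations), and (ii) verify that $a\mapsto v_P([a,i])$ is bounded below uniformly in $i$ for fixed $a$ and, crucially, that $v_P([a,i])\to+\infty$ when $v_P(a)\to+\infty$, so that the extension is continuous and not merely well-defined on $A$. The uniform lower bound is the heart: I expect to prove by induction on $i$, using the recursion and the fact that $v_P(\theta^{q^i}-\theta)=0$ for $d\nmid i$ while $\sum_{i\geq 1, d\mid i}v_P(\theta^{q^i}-\theta)$ restricted to the finitely many relevant factors is a fixed constant, that $v_P([a,i])\geq v_P(a)-c_P$ for an absolute constant $c_P$ depending only on $P$; combined with $v_P([a,i])\geq i\cdot(\text{something})$ type growth from iterating the $q$-power in the numerator, one gets both the well-definedness and the continuity. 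Everything else — uniqueness of the extension, the multiplicativity, the $\mathbb F_q(z)$-algebra structure — is then formal, following the standard density argument, and the parenthetical claim about $C$ is the $z=1$ specialization.
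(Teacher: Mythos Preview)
Your overall strategy --- extend by density using continuity of each coefficient map $a\mapsto[a,i]$, with the product topology on $\mathbb A_P\{\{\tau\}\}$ --- is exactly the paper's. The paper compresses everything into one line: for $a,b\in A$ with $a\equiv b\pmod{P^n}$ and $n\ge i$, the Goss recursion gives $v_P([a,i]-[b,i])\ge n-i$, and this is all one needs.

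However, the specific estimate you plan to prove, namely $v_P([a,i])\ge v_P(a)-c_P$ for a constant $c_P$ depending only on $P$, is \emph{false}. Take $a=P^k$: then $[P^k,kd]=1$ (the leading coefficient of $C_{P^k}=(C_P)^k$), so $v_P([P^k,kd])=0$ while $v_P(P^k)=k$, forcing $c_P\ge k$ for every $k$. The error is in your phrase ``restricted to the finitely many relevant factors'': the sum $\sum_{1\le j\le i,\ d\mid j}v_P(\theta^{q^j}-\theta)$ equals $\lfloor i/d\rfloor$, which grows with $i$ and is not a fixed constant. The correct bound carries this $i$-dependence: $v_P([a,i]-[b,i])\ge v_P(a-b)-i$ (or more sharply $\ge v_P(a-b)-\lfloor i/d\rfloor$). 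This is still perfectly adequate for continuity in the product topology, since for each \emph{fixed} $i$ it says $[a,i]\to[b,i]$ as $a\to b$ $P$-adically. Your induction scheme via the recursion and $x^q-y^q=(x-y)^q$ does yield this correct inequality; you just have to accept that the loss grows with $i$.

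Separately, your opening discussion about showing $v_P([a,i])\ge0$ for $a\in A$ and large $i$ is superfluous: for $a\in A$ one has $[a,i]\in A$ (so $v_P\ge0$ automatically) and $[a,i]=0$ for $i>\deg_\theta a$. You note this yourself mid-paragraph; it would be cleaner to drop that passage entirely and go straight to the continuity estimate for the difference $[a,i]-[b,i]$.
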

\begin{proof} Let $i\geq 0.$ Let $n\geq i,$ let  $a,b\in A$ such that $a\equiv b\pmod{P^n}.$ By the Goss relations, we get :
$$v_P([a,i]-[b,i])\geq n-i.$$
 The Lemma follows.
\end{proof}
 The extension of the Carlitz module  into a morphism from  $A_P$ to $A_P\{\{\tau\}\}$ in the above Lemma is called the $P$-adic Carlitz module, and, we  still denote  this object by $C$ in order to avoid heavy notation. 
Note that we have in $\mathbb K_P\{\{\tau\}\}$ :
$$\forall a\in \mathbb A_P, \exp_{\widetilde{C}} a = \widetilde{C}_a \exp_{\widetilde{C}},$$
$$\forall a\in \mathbb A_P, \Log_{\widetilde{C}} \widetilde{C}_a = a \Log_{\widetilde{C}}.$$
Furthermore, $\forall a\in A_P,$ we can write $\widetilde{C}_a= \sum_{i\geq 0} [a,i] z^i \tau ^i,$ $[a,i] \in A_P,$ and we still have the Goss relations :
$$[a,0]=a, \forall  i \geq 1,[a,i]=\frac{[a,i-1]^q-[a,i-1]}{\theta^{q^i}-\theta}.$$
 The same assertions remain true for the $P$-adic Carlitz module.
Let's observe that:
$$\forall i\geq 0, v_P (L_i)=[\frac{i}{d}],$$
where $[.]$ denotes the integral part, and :
$$v_P(D_i)= \frac{q ^i -q^{i-d[\frac{i}{d}]}}{q^d-1}.$$
Let $E/K_P$ be a finite extension and let $\mathbb E$ be the $P$-adic completion of $E(z).$  Let $v_P$ be the $P$-adic valuation on $E$ normalized such that $v_P(P)=1.$ We observe that $\forall x\in \mathbb E$ such that $v_P(x)>0,$ then $\Log_{\widetilde{C}}(x)$ converges in $\mathbb E.$ Furthermore :
$$\forall x\in \mathbb E, v_P(x) >0, \forall a\in \mathbb A_P,  \Log_{\widetilde{C}}(\widetilde{C}_a(x))=a\Log_{\widetilde{C}}(x),$$
$$\forall x\in \mathbb E, v_P(x)>\frac{1}{q^d-1}, v_P(\Log_{\widetilde{C}}(x))=v_P(x).$$
We observe that $\forall x\in \mathbb E$ is such that $v_P(x)>\frac{1}{q^d-1},$ then $\exp_{\widetilde{C}}(x)$ converges in $\mathbb E.$ Furthermore :

$$\forall x\in \mathbb E, v_P(x)>\frac{1}{q^d-1}, v_P(\exp_{\widetilde{C}}(x))=v_P(x),$$
$$\forall x\in \mathbb E, v_P(x)>\frac{1}{q^d-1}, \forall a\in \mathbb A_P,  \exp_{\widetilde{C}}(ax)=\widetilde{C}_a(\exp_{\widetilde{C}}(x)).$$
Finally, we also observe that :
$$\forall x\in \mathbb E, v_P(x)>\frac{1}{q^d-1}, \exp_{\widetilde{C}}(\Log_{\widetilde{C}}(x))= \Log_{\widetilde{C}}(\exp_{\widetilde{C}}(x))=x.$$
The same assertions remain valid for the Carlitz module.\par

\begin{lemma}\label{Lemma10} Let $x\in \mathbb E,v_P(x)>0.$ Then :
$$\Log_{\widetilde{C}}(x)=0\Leftrightarrow x=0.$$
 \end{lemma}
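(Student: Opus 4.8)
The plan is to prove the injectivity of $\Log_{\widetilde{C}}$ on the maximal ideal of $\mathbb{E}$ by a valuation (Newton-polygon) argument, exactly analogous to the $\infty$-adic Lemma \ref{LemmaTors}. One direction is trivial: if $x=0$ then $\Log_{\widetilde{C}}(x)=0$ since $\Log_{\widetilde{C}}\in 1+\mathbb{K}_P\{\{\tau\}\}\tau$ has zero constant term applied to $0$. For the converse, suppose $x\in\mathbb{E}$ with $v_P(x)>0$ and $x\neq 0$; I want to show $v_P(\Log_{\widetilde{C}}(x))$ is finite, hence $\Log_{\widetilde{C}}(x)\neq 0$. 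Write $\Log_{\widetilde{C}}(x)=\sum_{i\geq 0}\frac{z^i}{L_i}x^{q^i}$ and estimate the valuation of each term: using the formula $v_P(L_i)=[i/d]$ recorded just above, and $z$ a unit (so $v_P(z^i)=0$ in the $P$-adic sense on $\mathbb{T}(K_P)$-coefficients), one gets
$$v_P\!\left(\frac{z^i x^{q^i}}{L_i}\right)=q^i v_P(x)-\left[\frac{i}{d}\right].$$

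The key step is then to observe that this quantity tends to $+\infty$ as $i\to+\infty$ whenever $v_P(x)>0$: indeed $q^i v_P(x)$ grows exponentially in $i$ while $[i/d]$ grows only linearly, so for $i$ large the $i$-th term has valuation far exceeding $v_P(x)$, and in particular there is a \emph{unique} index $i_0$ achieving the minimal valuation among all terms with $i_0$ chosen minimal. By the standard ultrametric argument (the minimum of the valuations is attained, and uniquely so at the first index realizing it, after checking $i=0$ gives $v_P(x)$ which is already a genuine candidate), the valuation of the sum equals that minimum, which is finite. Hence $\Log_{\widetilde{C}}(x)\neq 0$. I would phrase this cleanly: since $v_P(x)>0$, for all $i\geq 1$ we have $q^iv_P(x)-[i/d]> v_P(x)$ once $i$ is large, while the $i=0$ term already has valuation exactly $v_P(x)<\infty$; choosing the term of least valuation and noting it is isolated gives $v_P(\Log_{\widetilde{C}}(x))<\infty$.

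I expect the main (minor) obstacle to be the bookkeeping needed to guarantee that the minimal-valuation term is attained and unique, i.e. that no cancellation occurs — this requires verifying that the strictly convex lower boundary of the Newton polygon of $\sum \frac{z^i}{L_i}T^{q^i}$ (in the variable $T=x$) has a well-defined leftmost vertex. Since $v_P(L_i)=[i/d]$ is (weakly) increasing and the exponents $q^i$ grow geometrically, the function $i\mapsto q^iv_P(x)-[i/d]$ is eventually strictly increasing for any fixed $v_P(x)>0$, so only finitely many indices can compete for the minimum, and among those the first one realizing the minimum is the unique contributor to the leading term of $\Log_{\widetilde{C}}(x)$. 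This is routine once set up. Alternatively — and perhaps more in the spirit of the paper — one can cite the already-established fact that on $\{v_P(x)>\frac{1}{q^d-1}\}$ the map $\Log_{\widetilde{C}}$ is an isometry (hence injective there), and then handle the remaining range $0<v_P(x)\leq \frac{1}{q^d-1}$ directly: for such $x$, $\widetilde{C}_a(x)$ for suitable $a\in A_P$ (e.g. a high power of $P$, or $\theta$) lands in the isometric region, and from $\Log_{\widetilde{C}}(\widetilde{C}_a(x))=a\,\Log_{\widetilde{C}}(x)$ together with $\widetilde{C}_a(x)\neq 0$ (which itself follows from the explicit shape $\widetilde{C}_a(x)=ax+(\text{higher order in }\tau)$ and $v_P(ax)$ being the strict minimum) one deduces $\Log_{\widetilde{C}}(x)\neq 0$. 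I would present the direct Newton-polygon computation as the cleaner route.
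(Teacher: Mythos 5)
Your proposal has a genuine gap, located exactly at the step you flag as a ``minor obstacle,'' and your attempted patch does not work. The minimum of $i\mapsto q^i v_P(x)-[i/d]$ need not be uniquely attained: take $v_P(x)=\frac{1}{q^d-1}$, which is realized in $\mathbb{E}$ as soon as $q^d-1$ divides the ramification index $e(E/K_P)$ (for instance when $E$ contains the Carlitz $P$-torsion). Then the $i=0$ and $i=d$ terms both have valuation $\frac{1}{q^d-1}$, while all other terms are strictly larger. Your claim that ``the first one realizing the minimum is the unique contributor'' is not a valid ultrametric argument in the presence of a tie: nothing rules out cancellation between the $i=0$ and $i=d$ terms on valuation grounds alone. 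The problematic valuations $\frac{1}{q^{(k-1)d}(q^d-1)}$, $k\geq 1$, are precisely the slopes of the Newton polygon of $\Log_{\widetilde{C}}$, i.e.\ the valuations of its genuine zeros over a sufficiently large field, so a pure valuation estimate cannot distinguish $\mathbb{E}$ from an extension in which $\Log_{\widetilde{C}}$ really does have kernel. (A minor further point: in $\mathbb{E}$ one has $\tau(z)=z$, so $\tau^i(x)\neq x^{q^i}$; the identity $v_P(\tau^i(x))=q^iv_P(x)$ still holds, but the ``Newton polygon in the variable $T=x$'' framing is not literally available.) Your fallback route has a matching defect: for $a=P$ and $0<v_P(x)<\frac{1}{q^d-1}$, the $z^d\tau^d$ term of $\widetilde{C}_P(x)$ has strictly \emph{smaller} valuation than $Px$, so ``$v_P(ax)$ is the strict minimum'' fails exactly in the range one has to treat.

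What actually closes the critical annulus $0<v_P(x)\leq\frac{1}{q^d-1}$ in the paper is a structural ingredient your computation never touches: the $z$-power-series grading. The proof works inside the Tate algebra $\mathbb{T}(E)\subset\mathbb{E}$, where an element is an honest series $\sum_j x_j z^j$ and one can compare $z$-coefficients of $\Log_{\widetilde{C}}(x)=\sum_i\frac{z^i}{L_i}\tau^i(x)$; this gives injectivity of $\Log_{\widetilde{C}}$ on $\mathbb{T}^{00}(E)=\{x\in\mathbb{T}(E): v_P(x)>0\}$ with no valuation bookkeeping at all. One then writes an arbitrary $x\in\mathbb{E}$ with $v_P(x)>0$ as $x=w+v$, where $w$ lies in the dense $\mathbb{F}_q(z)$-span $W$ of $\mathbb{T}^{00}(E)$ and $v_P(v)>\frac{1}{q^d-1}$. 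The hypothesis $\Log_{\widetilde{C}}(x)=0$ gives $\Log_{\widetilde{C}}(v)=\Log_{\widetilde{C}}(-w)$, and the isometry of $\exp_{\widetilde{C}}\circ\Log_{\widetilde{C}}$ on the high-valuation disc then forces $v\in W$, hence $x\in W$, hence $x=0$. That density-plus-isometry reduction to the Tate algebra is the idea missing from your proposal; the Newton-polygon estimate only controls the easy range $v_P(x)>\frac{1}{q^d-1}$, where the paper already has the isometry.
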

 
 \begin{proof} Let $\mathbb T(E)\subset \mathbb E$ be the  Tate algebra in the variable $z$ with coefficients in $E.$ We set :
 $$\mathbb T^{00}(E)=\{ x\in \mathbb T(E), v_P(x)>0\}.$$  Then $\Log_{\widetilde{C}}: \mathbb T^{00}(E)\rightarrow \mathbb T(E)$ is injective. Let $W$ be the sub-$\mathbb F_q(z)$-vector space of $\mathbb E$ generated by $\mathbb T^{00}(E).$  Then $W$ is dense in $\{ x\in \mathbb E, v_P(x)>0\},$ and $\Log_{\widetilde{C}}: W \rightarrow \mathbb E$ is injective. Now, let $x\in \mathbb E,$ $v_P(x)>0$ and $\Log_{\widetilde{C}}(x)=0.$ Let us write :
 $$x=w+ v, w\in W,  v\in \mathbb E, v_P(v)>\frac{1}{q^d-1}.$$
 We get :
 $$\Log_{\widetilde{C}}(v)=\Log_{\widetilde{C}}(-w).$$
 Thus :
 $$v=\exp_{\widetilde{C}}(\Log_{\widetilde{C}}(v))\in W.$$
 Thus $x\in W$ and therefore $x=0.$
 \end{proof}
 Let $f_E$ be the residual degree of $E/K_P.$ Let $O_{\mathbb E} =\{ x\in \mathbb E, v_P(x)\geq 0\}.$ Then, by the proof of Lemma \ref{Lemma5}, we have  :
 $$\widetilde{C}_{P^{f_E}-z^{df_E}}(O_{\mathbb E}) \subset \{ x\in \mathbb E, v_P(x)> 0\}.$$
 Now, observe that the map $\widetilde{C}_{P^{f_E}-z^{df_E}}: \mathbb T(E)\rightarrow \mathbb T(E)$ is injective, and therefore it is injective on $O_{\mathbb E}$. Thus, by  Lemma \ref{Lemma10}, we can conclude that :
 $$\widetilde{C}(O_{\mathbb E})_{\rm Tors}=\{0\},$$
 where  $\widetilde{C}(O_{\mathbb E})_{\rm Tors}$ denotes the torsion $\mathbb A$-sub-module of $\widetilde{C}(O_{\mathbb E})$ (note that $\widetilde{C}(O_{\mathbb E})$ is an $\mathbb A$-module and not an $\mathbb A_P$-module).
 Furthermore we define the $P$-adic Iwasawa logarithm attached to $\widetilde{C}$ as follows :
 $$\forall x\in O_{\mathbb E}, \Log_{\widetilde{C}}(x):= \frac{1}{P^{f_E}-z^{df_E}} \Log_{\widetilde{C}}(\widetilde{C}_{P^{f_E}-z^{df_E}}(x))\in \mathbb E.$$
 The map $\Log_{\widetilde{C}} : O_{\mathbb E}\rightarrow \mathbb E$ is a injective morphism of $\mathbb F_q(z)$-modules and :
 $$\forall a\in \mathbb A, \forall x\in O_{\mathbb E}, \Log_{\widetilde{C}}(\widetilde{C}_a(x))=
 a\Log_{\widetilde{C}}(x).$$
 In the case of the Carlitz module, if $O_E$ is the valuation ring of $E,$ we define the $P$-adic Iwasawa logarithm as follows :
 $$\forall x\in O_{E}, \Log_C(x):= \Log_{\widetilde{C}}(x)\mid _{z=1} \in E.$$
 Then :
 $$\forall a\in  A, \forall x\in O_{ E}, \Log_{C}(C_a(x))=
 a\Log_{C}(x),$$
 $$ {\rm Ker}(\Log_C: O_E\rightarrow O_E)=C(O_E)_{\rm Tors}.$$

\subsection{ A $P$-adic class formula} \label{P_adic class}
Let $L/K$ be a finite extension. Let :
$$L_P= L\otimes_K K_P,$$
$$\mathbb L_P =L\otimes _K \mathbb K_P,$$
$$\mathbb T(L_P)= L\otimes _K \mathbb T(K_P),$$
where we recall that $\mathbb T(K_P)$ is the Tate algebra in the variable $z$ with coefficients in $K_P.$ Recall that (Lemma \ref{Lemma3}, Lemma \ref{Lemma5}):
$$\widetilde{C}(\frac{\mathbb O_{L}}{P\mathbb O_{\ L} }) \simeq \prod_{\frak P\mid P} \frac{\mathbb A}{(n_{L/K}(\frak P)-z^{\deg_\theta(n_{L/K}(\frak P)}) \mathbb A},$$
$$ C(\frac{O_{ L}}{PO_{L} }) \simeq \prod_{\frak P\mid P} \frac{A}{(n_{L/K}(\frak P)-1)  A}.$$
Let :
$$O_{\mathbb L,P}= O_L \otimes _A \mathbb A_P,$$
$$O_{L,P}= O_L \otimes _A A_P.$$
We have  an injective morphism of $\mathbb F_q(z)$-vector spaces :
$$\Log_{\widetilde{C}} : O_{\mathbb L,P} \rightarrow \mathbb L_P,$$
 such that :
 $$\forall a\in \mathbb A, \forall x\in O_{\mathbb L,P}, \Log_{\widetilde{C}}(\widetilde{C}_a(x))= a\Log_{\widetilde{C}}(x).$$

We set :
$$\mathcal U(P\mathbb O_{ L})=\{ x\in \mathbb L_\infty \mid \exp_{\widetilde{C}}(x) \in P\mathbb O_{L}\}.$$

\begin{lemma}\label{Lemma11} There exists an isomorphism of $\mathbb A$-modules :
$$\frac{\mathcal U(\mathbb O_{ L})}{\mathcal U(P\mathbb O_{L})}\simeq \widetilde{C}(\frac{\mathbb O_{L}}{P\mathbb O_{L} }).$$
\end{lemma}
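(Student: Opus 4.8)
The plan is to construct the isomorphism directly from $\exp_{\widetilde C}$ and to pinpoint the one non-formal ingredient. Since $P\mathbb O_L\subset \mathbb O_L$, we have $\mathcal U(P\mathbb O_L)\subset \mathcal U(\mathbb O_L)$. By Lemma \ref{Lemma6}, $\exp_{\widetilde C}$ is injective on $\mathbb L_\infty$; and because $\exp_{\widetilde C}\,\theta=\widetilde C_\theta\,\exp_{\widetilde C}$ while $\tau$ fixes $\mathbb F_q(z)$, the map $\exp_{\widetilde C}\colon \mathbb L_\infty\to \widetilde C(\mathbb L_\infty)$ is $\mathbb A$-linear (the source with its natural $\mathbb A$-module structure). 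Moreover $P\mathbb O_L$ is a sub-$\mathbb A$-module of $\widetilde C(\mathbb O_L)$: for $a\in\mathbb A$ and $x\in\mathbb O_L$, $\widetilde C_a(Px)=aPx+\sum_{i\geq 1}[a,i]z^iP^{q^i}\tau^i(x)$ lies in $P\mathbb O_L$. Hence $x\mapsto \exp_{\widetilde C}(x)\bmod P\mathbb O_L$ is an $\mathbb A$-module homomorphism $\varphi\colon \mathcal U(\mathbb O_L)\to \widetilde C(\mathbb O_L/P\mathbb O_L)$ with $\ker\varphi=\{x\in\mathbb L_\infty:\exp_{\widetilde C}(x)\in P\mathbb O_L\}=\mathcal U(P\mathbb O_L)$, and $\varphi$ induces an injection of $\mathbb A$-modules
$$\bar\varphi\colon \frac{\mathcal U(\mathbb O_L)}{\mathcal U(P\mathbb O_L)}\hookrightarrow \widetilde C\Big(\frac{\mathbb O_L}{P\mathbb O_L}\Big)$$
whose image is $\big((\exp_{\widetilde C}(\mathbb L_\infty)\cap\mathbb O_L)+P\mathbb O_L\big)/P\mathbb O_L$, since $\exp_{\widetilde C}(\mathcal U(\mathbb O_L))=\exp_{\widetilde C}(\mathbb L_\infty)\cap\mathbb O_L$.

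Everything thus comes down to the surjectivity of $\bar\varphi$, i.e. to $\mathbb O_L=(\exp_{\widetilde C}(\mathbb L_\infty)\cap\mathbb O_L)+P\mathbb O_L$; using $\mathbb L_\infty=\mathbb O_L+\exp_{\widetilde C}(\mathbb L_\infty)$ from Lemma \ref{Lemma6} and $P\mathbb O_L\subset\mathbb O_L$, this is equivalent to the surjectivity statement
$$\mathbb L_\infty=P\mathbb O_L+\exp_{\widetilde C}(\mathbb L_\infty).$$
This is the step I expect to be the main obstacle, and I would prove it by the same mechanism that yields $\mathbb L_\infty=\mathbb O_L+\exp_{\widetilde C}(\mathbb L_\infty)$ in Lemma \ref{Lemma6}: the log-algebraicity input of \cite{ATR}, Proposition 2 (and its proof) only uses that $P\mathbb O_L$ is a cocompact $\mathbb A$-lattice of $\mathbb L_\infty$ — which it is, being a nonzero fractional $O_L$-ideal, so that it generates $\mathbb L_\infty$ over $\mathbb K_\infty$ — and should therefore apply with $P\mathbb O_L$ in place of the ring $\mathbb O_L$. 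Equivalently, one first proves the integral version $\exp_{\widetilde C}(\mathbb T(L_\infty))+P\,O_L[z]=\mathbb T(L_\infty)$ exactly as in the proofs of Lemmas \ref{Lemma6} and \ref{Lemma7}, and then passes to $\mathbb L_\infty$ by density of the $\mathbb F_q(z)$-algebra generated by $\mathbb T(L_\infty)$.

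Granting this, $\bar\varphi$ is an isomorphism of $\mathbb A$-modules, which is the assertion of the lemma; feeding Lemma \ref{Lemma5} into the right-hand side then recovers the product decomposition $\mathcal U(\mathbb O_L)/\mathcal U(P\mathbb O_L)\simeq\prod_{\mathfrak P\mid P}\mathbb A/(n_{L/K}(\mathfrak P)-z^{\deg_\theta(n_{L/K}(\mathfrak P))})\mathbb A$ recalled just before the statement, and in particular shows a posteriori that $\mathcal U(P\mathbb O_L)$ is again an $\mathbb A$-lattice in $\mathbb L_\infty$.
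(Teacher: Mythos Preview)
Your argument is correct and structurally identical to the paper's: both reduce the isomorphism to the single surjectivity statement
\[
\mathbb L_\infty = P\mathbb O_L + \exp_{\widetilde C}(\mathbb L_\infty),
\]
and then read off the isomorphism either directly (your $\bar\varphi$) or, equivalently, by comparing the two short exact sequences $0\to\mathcal U(\mathbb O_L)\to\widetilde C(\mathbb O_L)\to\mathbb L_\infty/\exp_{\widetilde C}(\mathbb L_\infty)\to 0$ and the analogous one for $P\mathbb O_L$, as the paper does.

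The one genuine difference is in how that surjectivity is obtained. You propose to rerun the proof of \cite{ATR}, Proposition~2 with the lattice $P\mathbb O_L$ in place of $\mathbb O_L$; this is legitimate but asks the reader to revisit that argument. The paper instead uses a clean conjugation trick: for any $a\in A\setminus\{0\}$, apply \cite{ATR}, Proposition~2 \emph{as stated} to the Drinfeld module $\phi=a^{-1}Ca$ (whose exponential is $a^{-1}\exp_{\widetilde C}\cdot a$), obtaining $\mathbb L_\infty=\mathbb O_L+\exp_{\widetilde\phi}(\mathbb L_\infty)=\mathbb O_L+a^{-1}\exp_{\widetilde C}(\mathbb L_\infty)$, and then multiply through by $a$. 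This avoids reopening the cited proof and in fact yields the lemma for $a\mathbb O_L$ with any nonzero $a$, not just $a=P$. Your route has the advantage of making explicit which property of $P\mathbb O_L$ is really used (being a cocompact $\mathbb A$-lattice), while the paper's route is shorter and black-boxes the cited proposition.
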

\begin{proof} Let $a\in A\setminus\{0\}.$ If we apply \cite{ATR}, Proposition 2, to the Drinfeld module $a^{-1} Ca,$   we get  :
$$\mathbb L_\infty = a\mathbb O_{ L} +\exp_{\widetilde{C}}(\mathbb L_\infty).$$
Therefore  we have  two short exacts sequences of $\mathbb A$-modules :
$$0\rightarrow \mathcal U(\mathbb O_{ L})\rightarrow\widetilde{C}( \mathbb O_{ L}) \rightarrow \frac{\mathbb L_\infty}{\exp_{\widetilde{C}}(\mathbb L_\infty)} \rightarrow 0,$$
$$0\rightarrow \mathcal U(a\mathbb O_{L})\rightarrow \widetilde{C}(a\mathbb O_{ L})\rightarrow \frac{\mathbb L_\infty}{\exp_{\widetilde{C}}(\mathbb L_\infty)} \rightarrow 0.$$
We therefore have an isomorphism of $\mathbb A$-modules :
$$\frac{\mathcal U(\mathbb O_{ L})}{\mathcal U(a\mathbb O_{ L})}\simeq \widetilde{C}(\frac{\mathbb O_{ L}}{a\mathbb O_{L} }).$$
\end{proof}
Let :
$$U(P\mathbb O_{L})=\exp_{\widetilde{C}}(\mathcal U(P\mathbb O_{ L}))\subset \widetilde{C}(P\mathbb O_{L}).$$
Recall that by the results of paragraph \ref{P-z-deformation}, $\Log_{\widetilde{C}}: \widetilde{C}(PO_{\mathbb L,P})\rightarrow PO_{\mathbb L, P}$ is an isomorphism of $\mathbb A$-modules. Let $\overline{U(P\mathbb O_{ L})}$ be the sub-$\mathbb A_P$-module of $\widetilde{C}(PO_{\mathbb L,P})$ generated by $U(P\mathbb O_{L}).$
 \begin{lemma}\label{Lemma12} The $\mathbb A_P$-module $\overline{U(P\mathbb O_{L})}$ is free of rank $[L:K].$
 \end{lemma}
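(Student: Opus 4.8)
The plan is to show that $\overline{U(P\mathbb O_L)}$ is a finitely generated torsion-free $\mathbb A_P$-module of the expected rank, and then invoke the structure theory over the complete discrete valuation ring $\mathbb A_P = \mathbb F_{q^d}(z)[[P]]$ to conclude freeness. Torsion-freeness is immediate: $\overline{U(P\mathbb O_L)}$ sits inside $\widetilde{C}(PO_{\mathbb L,P})$, and by the results of paragraph \ref{P-z-deformation} the Iwasawa logarithm $\Log_{\widetilde{C}} : \widetilde{C}(PO_{\mathbb L,P}) \to PO_{\mathbb L,P}$ is an isomorphism of $\mathbb A$-modules; since $PO_{\mathbb L,P} = PO_L\otimes_A\mathbb A_P$ is a free $\mathbb A_P$-module (hence torsion-free), so is $\widetilde{C}(PO_{\mathbb L,P})$, and thus so is every sub-$\mathbb A_P$-module.

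\emph{Finite generation and upper bound on the rank.} The submodule $\overline{U(P\mathbb O_L)}$ is, by definition, the $\mathbb A_P$-span of $U(P\mathbb O_L) = \exp_{\widetilde C}(\mathcal U(P\mathbb O_L))$ inside the free rank-$[L:K]$ module $\widetilde{C}(PO_{\mathbb L,P})$; since $\mathbb A_P$ is Noetherian, it is automatically finitely generated, and its rank is at most $[L:K]$. So the whole content is the matching lower bound: I must exhibit $[L:K]$ elements of $U(P\mathbb O_L)$ (equivalently, of $\mathcal U(P\mathbb O_L)$, via $\exp_{\widetilde C}$/$\Log_{\widetilde C}$) that are $\mathbb A_P$-linearly independent.

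\emph{Lower bound via the $\infty$-adic lattice.} Here is where I would bring in the global picture. By Lemma \ref{Lemma7}, $\mathcal U(O_L[z])$ is a finitely generated $A[z]$-module generating $\mathcal U(\mathbb O_L)$ over $\mathbb F_q(z)$; pick $u_1,\dots,u_n\in\mathcal U(O_L[z])$ that form an $\mathbb A$-basis of $\mathcal U(\mathbb O_L)$, with $n=[L:K]$ (as in the proof of Theorem \ref{Theorem1}). After multiplying through by a suitable element of $A$ — concretely, replace each $u_i$ by $\widetilde C_{P^{f}-z^{df}}(u_i)$ for a common $f$, or simply scale by an appropriate power of $P$ — one lands inside $\mathcal U(P\mathbb O_L)$ (this is exactly the mechanism behind Lemma \ref{Lemma11}, whose isomorphism $\mathcal U(\mathbb O_L)/\mathcal U(P\mathbb O_L)\simeq \widetilde C(\mathbb O_L/P\mathbb O_L)$ shows $\mathcal U(P\mathbb O_L)$ has finite index in $\mathcal U(\mathbb O_L)$, hence is again a rank-$n$ $\mathbb A$-lattice in $\mathbb L_\infty$). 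The key point is then that $\mathbb A$-linear independence of these elements inside $\mathbb L_\infty$ forces $\mathbb A_P$-linear independence inside $\mathbb L_P = L\otimes_K\mathbb K_P$: a nontrivial $\mathbb A_P$-relation among $\exp_{\widetilde C}(u_i')$ would, after clearing denominators and reducing, yield via $\Log_{\widetilde C}$ an $\mathbb A_P$-relation among the $u_i'$ in $O_{\mathbb L,P}$, and since $O_L$ is $A$-free with $L$ embedding $K$-linearly in $\mathbb K_P$, the natural map $\mathbb L_\infty \cap (\text{relevant domain}) \to \mathbb L_P$ detects $\mathbb A$-independence as $\mathbb A_P$-independence by flatness of $\mathbb A \hookrightarrow \mathbb A_P$. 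This gives $\operatorname{rank}_{\mathbb A_P}\overline{U(P\mathbb O_L)}\geq n$, completing the equality.

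\emph{Main obstacle.} The delicate step is the transfer of linear independence from the $\infty$-adic lattice $\mathcal U(P\mathbb O_L)\subset \mathbb L_\infty$ to $\mathbb A_P$-independence inside $O_{\mathbb L,P}$: one must be careful that the elements $u_i'$, a priori living in $L[[z]]\cap\mathbb T(L_\infty)$, actually have nonnegative $P$-adic valuation coefficientwise so that $\exp_{\widetilde C}$ and the Iwasawa $\Log_{\widetilde C}$ make sense on them $P$-adically (this is what forces the scaling by $P^{f}-z^{df}$ rather than an arbitrary element), and that the two logarithms — the $\infty$-adic convergent $\Log_{\widetilde C}$ and the $P$-adic Iwasawa $\Log_{\widetilde C}$ — are compatibly normalized so that an $\mathbb A$-basis on one side maps to an $\mathbb A_P$-independent family on the other. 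Once this compatibility is pinned down, the freeness is a formality from the DVR structure theorem.
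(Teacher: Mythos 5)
Your high-level skeleton is sound: torsion-freeness via $\Log_{\widetilde C}:\widetilde C(PO_{\mathbb L,P})\to PO_{\mathbb L,P}$ being an isomorphism onto a free $\mathbb A_P$-module, finite generation from Noetherianness, with the real content being the lower bound on the rank. Your construction of the candidate independent elements (scale an $\mathbb A$-basis of $\mathcal U(\mathbb O_L)$ lying in $\mathcal U(O_L[z])$ by a suitable $a=P^f-z^{df}\in\mathbb A$) is a legitimate alternative to the paper's route, which instead invokes the Stark-unit machinery of \cite{ATR} for the auxiliary Drinfeld module $\phi=P^{-1}CP$ to produce $a_1,\dots,a_n\in U(PO_L[z])$ whose $z=1$ specializations already give an $A$-lattice. (A notational slip: you should multiply $u_i$ by the scalar $P^f-z^{df}\in\mathbb A$, equivalently apply $\widetilde C_{P^f-z^{df}}$ to $\exp_{\widetilde C}(u_i)$, not apply $\widetilde C_{P^f-z^{df}}$ to $u_i$ itself.)

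The gap is in the transfer step. You claim that $\mathbb A$-linear independence of the $u_i'$ in $\mathbb L_\infty$ forces $\mathbb A_P$-linear independence inside $PO_{\mathbb L,P}$ ``by flatness of $\mathbb A\hookrightarrow\mathbb A_P$.'' Flatness cannot do this job. What flatness gives is that if $N$ is an $\mathbb A$-module in which the $u_i'$ are independent, they remain $\mathbb A_P$-independent in $N\otimes_{\mathbb A}\mathbb A_P$; but the ambient $\mathbb A_P$-module you need is $PO_{\mathbb L,P}$, which is not $N\otimes_{\mathbb A}\mathbb A_P$ for any $\mathbb A$-module $N$ naturally containing the $u_i'$, and you would still have to prove that the relevant comparison map into $PO_{\mathbb L,P}$ is injective --- which is the whole point. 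If such a soft argument were available it would apply verbatim at $z=1$ (with $A,A_P$ in place of $\mathbb A,\mathbb A_P$) and prove ${\rm rank}_{A_P}\overline{U(PO_L)}=[L:K]$ unconditionally, i.e.\ vanishing of the Leopoldt defect $\delta_P(U(O_L))$, which the paper explicitly leaves open. The $z$-deformation is precisely what saves the day, and the correct mechanism is the determinant: because $a_i'=\exp_{\widetilde C}(u_i')\in PO_L[z]$, the $P$-adic Iwasawa logarithm of $a_i'$ converges with $z$-coefficients tending to zero $P$-adically, so the matrix $M'$ expressing the $u_i'$ in a fixed $A$-basis of $O_L$ has entries lying compatibly in $K[[z]]$, in $\mathbb T(K_\infty)$, and in $\mathbb T(K_P)$. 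Then $\det(M')$ is a single element of this intersection, nonzero because the $u_i'$ span a finite-index sublattice of the $\infty$-adic lattice $\mathcal U(\mathbb O_L)$, hence nonzero in $K[[z]]$, hence nonzero in $\mathbb T(K_P)\subset\mathbb K_P$; this nonvanishing in the field $\mathbb K_P$ is what yields $\mathbb A_P$-independence. You flag the compatibility of the two logarithms as the delicate point --- correctly --- but you do not carry it out, and the flatness claim with which you replace it does not hold in the generality required.
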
 
\begin{proof}  Let $n=[L:K].$ 
Let $\phi$ be the Drinfeld module $P^{-1}CP.$ Then, if we apply the results in \cite{ATR}, section 2.5, the $A$-module $\frac{1}{P}\{ x\in \mathbb T(L_\infty)\mid \exp_{\widetilde{C}}(x) \in PO_L[z]\}\mid_{z=1}$ is the $A$-module of "Stark units" of $\phi/O_L;$ in particular this module is an $A$-lattice in $L_\infty.$  Thus,  we can choose  $a_1, \ldots, a_n \in U(PO_L[z])
  \subset PO_L[z], $ such that $\sum _{i=1}^n \log_{\widetilde{C}}(a_i)\mid_{z=1} A$ is an $A$-lattice in $L_\infty.$
  Let $ M \in M_n (K[[z]])\cap M_n (\mathbb K_\infty)\subset M_n (\mathbb T(K_\infty))$ is the matrix obtained by  expressing $\log_{\widetilde{C}}(a_i)$ in a fixed $A$-basis of $O_L.$ 
  We have : $$\det(M)\mid_{z=1} \not =0.$$
 In particular :
 $$ \det(M) \not =0.$$
 This implies that $\Log_{\widetilde{C}}(\overline{U(P\mathbb O_{ L})})$ is a free $\mathbb A_P$-module of rank $n.$ The Lemma follows.
   \end{proof}
We get :
\begin{theorem}\label{Theorem2} Let $L/K$ be a finite extension and let $P$ be a prime of $A.$ Then:
$${\rm Fitt}_{\mathbb A_P}\frac{\widetilde{C}(PO_{\mathbb L,P})}{\overline{U(P\mathbb O_{L})}}= Z_{P, O_L}(1;z) \mathbb A_P.$$
\end{theorem}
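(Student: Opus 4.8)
The plan is to transport the computation from the $P$-adic Carlitz module to the $\infty$-adic lattice $\mathcal U(P\mathbb O_L)$, where Taelman's $z$-deformed class formula is available, exploiting that the logarithms involved are \emph{global} power series in $L[[z]]$ converging simultaneously $\infty$-adically and $P$-adically.

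First, fix an $A$-basis $f_1,\dots,f_n$ of $O_L$ (with $n=[L:K]$); it is also an $\mathbb A_P$-basis of $O_{\mathbb L,P}$, so $Pf_1,\dots,Pf_n$ is an $\mathbb A_P$-basis of $PO_{\mathbb L,P}$. Since $\Log_{\widetilde C}\colon \widetilde C(PO_{\mathbb L,P})\to PO_{\mathbb L,P}$ is an isomorphism of $\mathbb A_P$-modules (paragraph \ref{P-z-deformation}) carrying $\overline{U(P\mathbb O_L)}$ onto $\Log_{\widetilde C}(\overline{U(P\mathbb O_L)})$, it suffices to compute $\mathrm{Fitt}_{\mathbb A_P}\bigl(PO_{\mathbb L,P}/\Log_{\widetilde C}(\overline{U(P\mathbb O_L)})\bigr)$. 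Next, arguing as in the proof of Lemma \ref{Lemma12} (apply Lemma \ref{Lemma7} to the Drinfeld module $P^{-1}CP$, equivalently use the obvious analogue of Lemma \ref{Lemma7} for $P\mathbb O_L$) choose $u_1,\dots,u_n\in\mathcal U(PO_L[z])$ forming an $\mathbb A$-basis of $\mathcal U(P\mathbb O_L)$, and set $a_i=\exp_{\widetilde C}(u_i)\in PO_L[z]$. By additivity of $\exp_{\widetilde C}$ this gives $\overline{U(P\mathbb O_L)}=\bigoplus_{i=1}^n\widetilde C_{\mathbb A_P}(a_i)$.

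The crucial observation is that each $u_i$ is simply the formal application of the power series $\Log_{\widetilde C}=\sum_{j\ge 0}z^jL_j^{-1}\tau^j$ to $a_i\in L[[z]]$, since $\Log_{\widetilde C}\circ\exp_{\widetilde C}=\mathrm{id}$ holds formally on $L[[z]]$. Because $a_i\in PO_L[z]$ has $v_P(a_i)\ge 1>\tfrac1{q^d-1}$, this same power series converges $P$-adically at $a_i$ and its coefficients tend to $0$ both $\infty$-adically and $P$-adically; hence $u_i\in\mathbb T(L_\infty)\cap\mathbb T(L_P)\cap L[[z]]$, and $u_i$ equals the ($P$-adic Iwasawa) logarithm $\Log_{\widetilde C}(a_i)$, which by paragraph \ref{P-z-deformation} lies in $PO_{\mathbb L,P}$. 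Therefore $\Log_{\widetilde C}(\overline{U(P\mathbb O_L)})=\bigoplus_{i=1}^nu_i\mathbb A_P$. Writing $u_j=\sum_iM_{ij}f_i$ we get $M=(M_{ij})\in M_n(K[[z]])\cap M_n(\mathbb K_\infty)\cap M_n(\mathbb K_P)$, and since each $u_j\in PO_{\mathbb L,P}$ the matrix $N:=P^{-1}M$ has entries in $\mathbb A_P$ and expresses the $\mathbb A_P$-basis $(u_j)$ in the $\mathbb A_P$-basis $(Pf_i)$; thus
$$\mathrm{Fitt}_{\mathbb A_P}\Bigl(\tfrac{PO_{\mathbb L,P}}{\Log_{\widetilde C}(\overline{U(P\mathbb O_L)})}\Bigr)=\det(N)\,\mathbb A_P=P^{-n}\det(M)\,\mathbb A_P .$$
It remains to identify $\det(M)$. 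Seen $\infty$-adically, $M$ is exactly the matrix of the $\mathbb K_\infty$-linear map sending $(f_i)$ to the $\mathbb A$-basis $(u_i)$ of the $\mathbb A$-lattice $\mathcal U(P\mathbb O_L)$, so $\det(M)\ne 0$ and $R_{\mathcal U(P\mathbb O_L)}=\det(M)/\mathrm{sgn}(\det M)$. From $\mathcal U(P\mathbb O_L)\subset\mathcal U(\mathbb O_L)$ with $\mathcal U(\mathbb O_L)/\mathcal U(P\mathbb O_L)\simeq\widetilde C(\mathbb O_L/P\mathbb O_L)$ (Lemma \ref{Lemma11}) the index relation for regulators gives $R_{\mathcal U(P\mathbb O_L)}=R_{\mathcal U(\mathbb O_L)}\,[\widetilde C(\mathbb O_L/P\mathbb O_L)]_{\mathbb A}$; combining with the $z$-deformed class formula $R_{\mathcal U(\mathbb O_L)}=Z_{O_L}(1;z)$, with $[\mathbb O_L/P\mathbb O_L]_{\mathbb A}=P^{\,n}$, with the description of $[\widetilde C(\mathbb O_L/P\mathbb O_L)]_{\mathbb A}$ coming from Lemma \ref{Lemma5}, and with the Euler-product identity $Z_{P,O_L}(1;z)=\prod_{\mathfrak P\mid P}\bigl(1-n_{L/K}(\mathfrak P)^{-1}z^{\deg_\theta n_{L/K}(\mathfrak P)}\bigr)Z_{O_L}(1;z)$ established in the proof of Lemma \ref{Lemma1}, one obtains $R_{\mathcal U(P\mathbb O_L)}=P^{\,n}\,Z_{P,O_L}(1;z)$. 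Hence $\det(M)=\mathrm{sgn}(\det M)\,P^{\,n}Z_{P,O_L}(1;z)$ with $\mathrm{sgn}(\det M)\in\mathbb F_q(z)^\times\subset\mathbb A_P^\times$, so
$$\mathrm{Fitt}_{\mathbb A_P}\Bigl(\tfrac{\widetilde C(PO_{\mathbb L,P})}{\overline{U(P\mathbb O_L)}}\Bigr)=P^{-n}\det(M)\,\mathbb A_P=Z_{P,O_L}(1;z)\,\mathbb A_P ,$$
which also shows in passing that $Z_{P,O_L}(1;z)\in\mathbb A_P$.

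The main obstacle is the third paragraph: showing that the $\infty$-adic logarithm $u_i$ of $a_i$ and the $P$-adic (Iwasawa) logarithm of $a_i$ are one and the same element of $L[[z]]$, and that it lies in $PO_{\mathbb L,P}$ (equivalently $N\in M_n(\mathbb A_P)$). This is where the formal-Carlitz-module viewpoint of paragraph \ref{P-z-deformation} is needed: one uses the valuation estimates $v_P(L_j)=[j/d]$ and $v_P(D_j)=(q^j-q^{\,j-d[j/d]})/(q^d-1)$ to control convergence, and — for primes $\mathfrak P\mid P$ of large ramification, where $\exp_{\widetilde C}$ and $\Log_{\widetilde C}$ need not converge termwise on $PO_{\mathbb L,P}$ — one passes through the $\widetilde C_{P^{f}-z^{df}}$-device exactly as in that paragraph, so that the two logarithms really are restrictions of one global series. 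Everything else reduces to the determinant of a change of basis and to bookkeeping of the Euler factors at $\mathfrak P\mid P$.
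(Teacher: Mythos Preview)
Your argument is correct and follows essentially the same route as the paper: reduce via the $P$-adic logarithm isomorphism $\Log_{\widetilde C}:\widetilde C(PO_{\mathbb L,P})\to PO_{\mathbb L,P}$, choose an $\mathbb A$-basis of $\mathcal U(P\mathbb O_L)$ lying in $\mathcal U(PO_L[z])$, and identify the resulting change-of-basis determinant with $P^{\,n}Z_{P,O_L}(1;z)$ using Lemma~\ref{Lemma11} together with the class formula for $\widetilde C/\mathbb O_L$. Your explicit discussion of why the global element $u_i\in L[[z]]$ agrees with the $P$-adic logarithm of $a_i$ makes precise exactly the step the paper compresses into the single line ``$\Log_{\widetilde C}(\overline{U(P\mathbb O_L)})=\oplus_i u_i\mathbb A_P$''; note, incidentally, that since $\Log_{\widetilde C}$ already converges termwise on the whole maximal ideal (paragraph~\ref{P-z-deformation}), the Iwasawa device is not actually needed for $a_i\in PO_L[z]$, so the ramification caveat in your final paragraph is harmless but unnecessary.
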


 \begin{proof} Let $n=[L:K].$ Recall that $\log_{\widetilde{C}}: \widetilde{C}(PO_{\mathbb L,P}) \rightarrow PO_{\mathbb L,P}$ is an isomorphism of $\mathbb A_P$-modules, thus  :
 $${\rm Fitt}_{\mathbb A_P}\frac{\widetilde{C}(PO_{\mathbb L,P})}{\overline{U(P\mathbb O_{L})}}={\rm Fitt}_{\mathbb A_P}\frac{PO_{\mathbb L,P}}{\Log_{\widetilde{C}}(\overline{U(P\mathbb O_{ L})})}.$$  
  We have :
 $$R_{\mathcal U(P\mathbb O_{L})}=[\frac{\mathcal U(\mathbb O_{L})}{\mathcal U(P\mathbb O_{L})}]_{\mathbb A}R_{\mathcal U(\mathbb O_{L})}.$$
Thus, by Lemma \ref{Lemma11} :
$$R_{\mathcal U(P\mathbb O_{L})}=[\widetilde{C}(\frac{\mathbb O_{L}}{P\mathbb O_{L} })]_{\mathbb A}R_{\mathcal U(\mathbb O_{L})}.$$
Therefore, by the class formula for $\widetilde{C}/\mathbb O_L,$ we have an equality in $\mathbb T(K_\infty)$  :
 $$R_{\mathcal U(P\mathbb O_{L})}=[\widetilde{C}(\frac{\mathbb O_{ L}}{P\mathbb O_{ L} })]_{\mathbb A} Z_{O_L}(1;z).$$
 Now, we observe that, by the above equality and   by Lemma \ref{Lemma5},  $R_{\mathcal U(P\mathbb O_{L})}= P^n Z_{P, O_L}(1;z)$ can be viewed as an element of $\mathbb A_P.$
 Now,  we choose an $\mathbb A$-basis $u_1, \ldots, u_n \in \mathcal U(P\mathbb O_L).$ Then, by Lemma \ref{Lemma12}, $\exp_{\widetilde{C}}(u_i), i=1, \ldots, n$ is an $\mathbb A_P$-basis of $\overline{U(P\mathbb O_{L})}.$ Thus :
 $$\Log_{\widetilde{C}}(\overline{U(P\mathbb O_{ L})})=\oplus_{i=1}^n u_i \mathbb A_P.$$
 This implies that we have :
$${\rm Fitt}_{\mathbb A_P}\frac{O_{\mathbb L,P}}{\Log_{\widetilde{C}}(\overline{U(P\mathbb O_{ L})})}=R_{\mathcal U(P\mathbb O_{L})}\mathbb A_P= P^nZ_{P, O_L}(1;z)\mathbb A_P.$$
Now :
$${\rm Fitt}_{\mathbb A_P}\frac{O_{\mathbb L,P}}{\Log_{\widetilde{C}}(\overline{U(P\mathbb O_{ L})})}=P^n {\rm Fitt}_{\mathbb A_P}\frac{PO_{\mathbb L,P}}{\Log_{\widetilde{C}}(\overline{U(P\mathbb O_{L})})}= P^n {\rm Fitt}_{\mathbb A_P}\frac{\widetilde{C}(PO_{\mathbb L,P})}{\overline{U(P\mathbb O_{L})})}.$$
 \end{proof}
 

 \begin{theorem}\label{Theorem3} Let $L/K$ be a finite extension  of degree $n,$ and let $P$ be a prime of $A.$  Let $U(PO_L)= U(O_L)\cap PO_L,$ and let $\overline{U(PO_L)}$ be the sub $A_P$-module of $C(PO_{L,P})$ generated by  $U(PO_L).$ Then :
$${\rm Fitt}_{A_P}\frac{C(PO_{L,P})}{\overline{U(PO_L)}}=\frac{[U(O_L):U(PO_L)]_A}{[C(\frac{O_L}{PO_L})]_A}\frac{ \zeta_{P,O_L}(1) }{[H(O_L)]_A}A_P.$$
In particular, if $L/K$ is not "real" then :
$$\zeta_{P,O_L}(1)=0.$$
 \end{theorem}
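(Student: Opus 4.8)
The plan is to deduce the $P$-adic class formula for the Carlitz module from the $z$-deformed version, Theorem \ref{Theorem2}, by specializing $z=1$ and carefully tracking the various indices that arise. First I would reduce the $\mathbb{A}_P$-Fitting ideal identity of Theorem \ref{Theorem2} modulo the prime $(z-1)$: since $\mathbb{A}_P = \mathbb{F}_{q^d}(z)[[P]]$ and $A_P = \mathbb{F}_{q^d}[[P]]$, specialization $z\mapsto 1$ is a ring map whose behaviour on Fitting ideals of finitely presented torsion modules is well understood (Fitting ideals commute with base change). The subtlety is that $\mathcal U(P\mathbb O_L)$, $U(P\mathbb O_L)$ and the lattices $\mathcal U(PO_L[z])$ are defined over $A[z]$-style rings, not over $\mathbb{A}_P$ directly, so I would go through the ``Stark unit'' module $\mathcal U_{St}$ as in paragraph \ref{z-deformation}: set $U_{St}(PO_L) = U(PO_L[z])\mid_{z=1}$ and $\mathcal U_{St}(PO_L)$ accordingly, and record that $[\mathcal U(O_L)/\mathcal U_{St}(O_L)]_A = [H(O_L)]_A$ and the analogous statement with $PO_L$ in place of $O_L$. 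The $P$-adic logarithm $\Log_C : O_{L,P}\to L_P$ defined in paragraph \ref{P-z-deformation} is the $z=1$ specialization of $\Log_{\widetilde C}$, and $\Log_{\widetilde C}(\overline{U(P\mathbb O_L)})\mid_{z=1}$ generates $\Log_C(\overline{U(PO_L)})$ as an $A_P$-module, which is the bridge between the two theorems.

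Next I would assemble the index contributions. Theorem \ref{Theorem2} gives $\mathrm{Fitt}_{\mathbb A_P}\,\widetilde C(PO_{\mathbb L,P})/\overline{U(P\mathbb O_L)} = Z_{P,O_L}(1;z)\mathbb{A}_P$. Specializing at $z=1$, the left side should become $\mathrm{Fitt}_{A_P}\,C(PO_{L,P})/\overline{U(PO_L)}$ up to a unit, \emph{provided} that passing from $\overline{U(P\mathbb O_L)}$ (generated over $\mathbb{A}_P$ by a $\mathbb{A}$-basis $u_1,\dots,u_n$ of $\mathcal U(P\mathbb O_L)$) to $\overline{U(PO_L)}$ (generated over $A_P$ by $U(PO_L)=U(O_L)\cap PO_L$) introduces exactly the correction factor measuring the discrepancy between $\{u_i\mid_{z=1}\}$ and an honest $A$-basis of $U(PO_L)$. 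This discrepancy splits as: the index $[U(O_L):U(PO_L)]_A$ on one side, the torsion $[H(O_L)]_A$ coming from $\mathcal U$ versus $\mathcal U_{St}$ on the other, and the deformation artifact that $[\widetilde C(\frac{\mathbb O_L}{P\mathbb O_L})]_{\mathbb A}\mid_{z=1}$ equals $\prod_{\mathfrak P\mid P}(n_{L/K}(\mathfrak P)-1) = [C(\frac{O_L}{PO_L})]_A$ by Lemma \ref{Lemma5} and Lemma \ref{Lemma3}. Meanwhile $Z_{P,O_L}(1;z)\mid_{z=1} = \zeta_{P,O_L}(1)$ by Definition \ref{P-adiczeta}. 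Collecting, I expect the ratio of the two Fitting ideals to be exactly $\frac{[U(O_L):U(PO_L)]_A}{[C(O_L/PO_L)]_A}\cdot\frac{\zeta_{P,O_L}(1)}{[H(O_L)]_A}$, up to $\mathbb F_q^\times$, which absorbs into the Fitting ideal as monic generators.

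For the final ``in particular'' clause: if $L/K$ is not real, then $\exp_C$ is not injective on $L_\infty$, so by Lemma \ref{Lemma4} $\Lambda_L\neq 0$ has positive rank and $\mathcal U(O_L)$ fails to inject under $\Log$; concretely $U(O_L)$ then has $A$-rank strictly less than $n=[L:K]$ (it is $\mathcal U(O_L)/\Lambda_L$ which has rank $n - \mathrm{rank}_A\Lambda_L < n$), hence $U(PO_L)\subseteq U(O_L)$ cannot be an $A$-lattice of full rank in $L_P$, the module $C(PO_{L,P})/\overline{U(PO_L)}$ is not $A_P$-torsion, and its Fitting ideal is the zero ideal; comparing with the displayed formula forces $\zeta_{P,O_L}(1)=0$. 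Alternatively, and more cleanly, I would argue that in the non-real case $\det(M)=0$ already at the level of $\mathbb{K}_\infty$ because $\mathcal U(O_L[z])$ has rank $< n$, so $Z_{P,O_L}(1;z)$ vanishes in $\mathbb{T}(K_\infty)$ and thus $\zeta_{P,O_L}(1)=Z_{P,O_L}(1;z)\mid_{z=1}=0$.

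\textbf{Main obstacle.} The delicate point is the bookkeeping of the specialization $z\mapsto 1$: one must show that no ``extra'' factor is lost or gained when comparing $\overline{U(P\mathbb O_L)}$ with $\overline{U(PO_L)}$, i.e. that the $A_P$-module generated by $U(PO_L)$ inside $C(PO_{L,P})$ is precisely the $z=1$ fibre of the $\mathbb{A}_P$-module generated by $U(P\mathbb O_L)$, corrected by $[H(O_L)]_A$ (via the Stark-unit index $[\mathcal U(O_L)/\mathcal U_{St}(O_L)]_A$) and by $[U(O_L):U(PO_L)]_A$. This requires the freeness statement of Lemma \ref{Lemma12}, the injectivity of $\Log_C$ on $O_{L,P}$ modulo torsion, and a careful application of the refined class formula Theorem \ref{Theorem1} to keep track of the monic constant $\beta(1)\in\mathbb F_q^\times$; I expect this to be where most of the work lies, with everything else being either a direct invocation of Theorem \ref{Theorem2} or an elementary valuation/rank computation.
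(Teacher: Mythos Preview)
Your overall strategy --- deduce the $P$-adic formula from the $z$-deformation by specializing at $z=1$ and tracking indices via Stark units --- matches the paper's, and you correctly flag the bookkeeping of specialization as the crux. However, your opening move contains a genuine error: you propose to ``reduce the $\mathbb{A}_P$-Fitting ideal identity of Theorem~\ref{Theorem2} modulo the prime $(z-1)$'', but $(z-1)$ is a \emph{unit} in $\mathbb{A}_P = \mathbb{F}_{q^d}(z)[[P]]$, so there is no ring map $\mathbb{A}_P \to A_P$ sending $z\mapsto 1$, and Fitting-ideal base change does not apply. The paper circumvents this by not specializing Theorem~\ref{Theorem2} directly: it returns to Theorem~\ref{Theorem1} (the refined class formula), which yields an identity $\det(M) = \beta(z)\, Z_{O_L}(1;z)$ valid in $K[[z]]$. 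Because the chosen $a_i \in U(O_L[z]) \cap PO_L[z]$ have $\Log_{\widetilde C}(a_i) \in \mathbb{T}(L_P)$, the matrix $M$ lies in $M_n(\mathbb{T}(K_P))$, the identity can be reinterpreted there as $\det(M) = \beta(z)\,\frac{P^n}{[\widetilde C(\mathbb O_L/P\mathbb O_L)]_{\mathbb A}}\,Z_{P,O_L}(1;z)$, and \emph{that} equality can legitimately be evaluated at $z=1$. The index accounting you outline (the Stark index $[H(O_L)]_A$ via $\beta(1)$, the factor $[\widetilde C(\mathbb O_L/P\mathbb O_L)]_{\mathbb A}\mid_{z=1} = [C(O_L/PO_L)]_A$, and $[U(O_L):U(PO_L)]_A$) then proceeds essentially as you describe.

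Your alternative argument for the non-real case is also incorrect: by Lemma~\ref{Lemma6}, $\exp_{\widetilde C}$ is injective on $\mathbb{L}_\infty$ for \emph{every} $L$, so $\mathcal U(\mathbb O_L)$ (and hence the $\mathbb F_q(z)$-span of $\mathcal U(O_L[z])$) always has full rank $n$, and $Z_{P,O_L}(1;z) \in \mathbb{T}(K_P)$ is never zero. What fails in the non-real case is only the specialization: $\det(M)\mid_{z=1} = 0$ because $\overline{U(PO_L)}$ has $A_P$-rank $< n$, and the paper reads off $\zeta_{P,O_L}(1) = 0$ from the $\mathbb{T}(K_P)$-identity above, not from the displayed $A_P$-formula (which, as you implicitly use it, would be circular). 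Your first argument for this case has the right rank observation but the wrong logical direction.
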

 \begin{proof} let $n=[L:K].$ By the proof of Lemma \ref{Lemma12}, there exist $a_1, \ldots, a_n \in U(O_L[z])\cap PO_L[z]$ such that   $u_1\mid_{z=1}, \ldots , u_n\mid_{z=1}$ is a $K_\infty$-basis of $L_\infty,$ , where   $u_i=\log_{\widetilde{C}}(a_i) , i=1, \ldots ,n.$  Let $M$ be the  $n\times n$ matrix obtained by  expressing $u_i$ in a fixed $A$-basis of $O_L,$ By the refined class formula (Theorem \ref{Theorem1}), we have the following formal equality in $K[[z]]$: 
 $$\det(M) =\beta(z) Z_{O_L}(1;z),$$
 where  $\beta(z) \in A[z]\setminus (z-1)A[z],$ and :
 $$\beta(1) A ={\rm Fitt}_A\frac{\mathcal {U}_{St}(O_L)}{N},$$ 
 where $N=\oplus_{i=1}^n u_i\mid_{z=1} A.$ 
 But,  since $u_1, \ldots, u_n \in \mathbb T(L_P)= O_L\otimes_A \mathbb T(K_P),$  thus $M\in M_n(\mathbb T(K_P)),$ and we observe that the latter formal  equality  can be interpreted as an equality in $\mathbb T( K_P):$
 $$\det(M) = \beta(z) \frac{P^n}{[\widetilde{C}(\frac{\mathbb O_L}{P\mathbb O_L})]_{\mathbb A} }Z_{P,O_L}(1;z).$$
 Note that :
 $$[\widetilde{C}(\frac{\mathbb O_L}{P\mathbb O_L})]_{\mathbb A} \in \mathbb T(K_P)^\times.$$
 let $\bar N$ be the  sub-$A_P$ module of $C(PO_{L,P})$ generated by $a_1\mid_{z=1}, \ldots, a_n\mid_{z=1},$ then $\frac{\overline{U(PO_L)}}{\bar N}$ is a finite $A_P$-module.
 Thus, we have :
 $$\det(M)\mid_{z=1}\not =0 \Leftrightarrow {\rm rank}_{A_P} \overline{U(PO_L)}=n .$$
 In particular, if $L/K$ is not real, then ${\rm rank}_{A_P} \overline{U(PO_L)}<n .$  Thus, if $L/K$ is not real, we have $\zeta_{P,O_L}(1)= Z_{P,O_L}(1;z)\mid_{z=1}=0.$ Now, let's assume that $L/K$ is real. Then :
 $${\rm Fitt}_{A_P}\frac{\overline{U(PO_L)}}{\bar N}=({\rm Fitt}_{A}\frac {{\mathcal U(PO_L)}}{N}) A_P ,$$
 where $\mathcal U(PO_L)=\{ x\in L_\infty, \exp_C(x)\in PO_L\} .$
 Furthermore :
 $${\rm Fitt}_{A_P}\frac{C(PO_{L,P})}{{\bar N}}=\frac{\beta(1)}{[C(\frac{O_L}{PO_L})]_A} \zeta_{P,O_L}(1)A_P.$$
 Since $[\mathcal U(O_L): \mathcal U_{St}(O_L)]_A= [H(O_L)]_A,$ we get :
 $${\rm Fitt}_{A_P}\frac{C(PO_{L,P})}{\overline{U(PO_L)}}=\frac{[U(O_L):U(PO_L)]_A}{[C(\frac{O_L}{PO_L})]_A}\frac{ \zeta_{P,O_L}(1) }{[H(O_L)]_A}A_P.$$

 \end{proof}
 Now, let's assume that $L/K$ is real.  Then $U(O_L)$ is a free $A$-module of rank $n=[L:K].$ Let's select an  $A$-basis of $U(O_L):$ $a_1, \ldots ,a_n \in C(O_L).$  Let's fix an $A$-basis of $O_L : b_1, \ldots, b_n.$ Let $\sigma : L_P \rightarrow L_P$ be the $K_P$-linear map such that  $\sigma (b_i)= \Log_C(a_i),$ where $\Log_C$ is the Iwasawa $P$-adic logarithm.  Then $\det \sigma$ is well-defined modulo $\mathbb F_q^\times.$
  We set :
  $$R_{P,L}=\det \sigma\in K_P \pmod{\mathbb F_q^\times}.$$
 By the proof of Theorem \ref{Theorem3},  since $R_{P,L}$ is well-defined modulo $\mathbb F_q^\times,$ we can select an $A$-bais of $U(O_L)$ such that we have the following $P$-adic class formula  :
 \begin{theorem}\label{Theorem4} Let $L/K$ be a finite  real extension and let $P$ be a prime of $A.$  We have :
$$\zeta_{P,O_L}(1)=[H(O_L)]_A \frac{[C(\frac{O_L}{PO_L})]_A}{[\frac{O_L}{PO_L}]_A}  R_{P,L}.$$
\end{theorem}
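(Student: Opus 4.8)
The plan is to deduce Theorem \ref{Theorem4} from Theorem \ref{Theorem3} by computing the Fitting ideal on the left-hand side directly from the definition of the $P$-adic regulator $R_{P,L}$, and by evaluating the remaining index terms. First I would start from the identity in Theorem \ref{Theorem3}:
$$
{\rm Fitt}_{A_P}\frac{C(PO_{L,P})}{\overline{U(PO_L)}}=\frac{[U(O_L):U(PO_L)]_A}{[C(\frac{O_L}{PO_L})]_A}\frac{ \zeta_{P,O_L}(1) }{[H(O_L)]_A}A_P,
$$
which already isolates $\zeta_{P,O_L}(1)$ up to the three multiplicative factors $[U(O_L):U(PO_L)]_A$, $[C(\frac{O_L}{PO_L})]_A$, and ${\rm Fitt}_{A_P}\frac{C(PO_{L,P})}{\overline{U(PO_L)}}$ (together with $[H(O_L)]_A$, which already appears on the right side of the desired formula). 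So the whole proof reduces to identifying that Fitting ideal as $\frac{[C(\frac{O_L}{PO_L})]_A^2}{[U(O_L):U(PO_L)]_A\,[\frac{O_L}{PO_L}]_A}\,R_{P,L}\,A_P$.

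Next I would compute ${\rm Fitt}_{A_P}\frac{C(PO_{L,P})}{\overline{U(PO_L)}}$ via the Iwasawa logarithm. Since $\Log_C : C(PO_{L,P})\to PO_{L,P}$ is an isomorphism of $A_P$-modules (as recorded at the end of paragraph \ref{P-z-deformation}, applied after the $z=1$ specialization of Theorem \ref{Theorem2}'s setup), we have
$$
{\rm Fitt}_{A_P}\frac{C(PO_{L,P})}{\overline{U(PO_L)}}={\rm Fitt}_{A_P}\frac{PO_{L,P}}{\Log_C(\overline{U(PO_L)})}.
$$
Because $U(O_L)$ is free of rank $n$ with basis $a_1,\dots,a_n$ and $U(PO_L)=U(O_L)\cap PO_L$ has finite index $[U(O_L):U(PO_L)]_A$ in it, the $A_P$-span $\overline{U(PO_L)}$ is a free $A_P$-module whose logarithm lattice $\Log_C(\overline{U(PO_L)})$ sits inside $PO_{L,P}$ with determinant (relative to the fixed $A$-basis $b_1,\dots,b_n$ of $O_L$, scaled appropriately to an $A_P$-basis of $PO_{L,P}$) equal, up to $\mathbb F_q^\times$, to $\frac{1}{[U(O_L):U(PO_L)]_A}\det(\sigma)=\frac{R_{P,L}}{[U(O_L):U(PO_L)]_A}$, where $\sigma$ is the map from the definition of $R_{P,L}$. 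But $PO_{L,P}$ has index $[\frac{O_L}{PO_L}]_A$ inside $O_{L,P}$, and $O_{L,P}$ has $b_1,\dots,b_n$ as an $A_P$-basis; hence ${\rm Fitt}_{A_P}\frac{PO_{L,P}}{\Log_C(\overline{U(PO_L)})}=\frac{R_{P,L}}{[U(O_L):U(PO_L)]_A\,[\frac{O_L}{PO_L}]_A}A_P$ — here I must be careful that the extra factor of $[\frac{O_L}{PO_L}]_A$ enters once rather than being double-counted, exactly as in the last two lines of the proof of Theorem \ref{Theorem2}. Substituting this into the Theorem \ref{Theorem3} identity and clearing denominators gives
$$
\zeta_{P,O_L}(1)=[H(O_L)]_A\,\frac{[C(\frac{O_L}{PO_L})]_A}{[\frac{O_L}{PO_L}]_A}\,R_{P,L},
$$
which is the claim.

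The main obstacle I anticipate is bookkeeping the powers of $[\frac{O_L}{PO_L}]_A$ (equivalently of $P$ when $L=K$) correctly: one has to track how the index of $PO_{L,P}$ in $O_{L,P}$ interacts with the index of $\Log_C(\overline{U(PO_L)})$ in $PO_{L,P}$ versus in $O_{L,P}$, mirroring the ${\rm Fitt}_{\mathbb A_P}\frac{O_{\mathbb L,P}}{\cdots}=P^n{\rm Fitt}_{\mathbb A_P}\frac{PO_{\mathbb L,P}}{\cdots}$ manoeuvre at the end of the proof of Theorem \ref{Theorem2}. A secondary subtlety is checking that the specialization $z=1$ of the Iwasawa logarithm is compatible with taking $A_P$-spans — i.e. that $\overline{U(PO_L)}$ is indeed $A_P$-free of rank $n$ and that $\det(\sigma)\neq 0$ — but this is precisely what the proof of Lemma \ref{Lemma12} and Theorem \ref{Theorem3} already establish in the real case, so I would simply cite it rather than redo it. Finally, since $R_{P,L}$ is only well-defined modulo $\mathbb F_q^\times$ and $\zeta_{P,O_L}(1)\in A_P$ is canonical, one chooses the $A$-basis of $U(O_L)$ so that the $\mathbb F_q^\times$-ambiguity is pinned down, as noted in the sentence preceding the statement of Theorem \ref{Theorem4}.
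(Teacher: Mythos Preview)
Your strategy---deduce Theorem~\ref{Theorem4} from Theorem~\ref{Theorem3} by computing ${\rm Fitt}_{A_P}\frac{C(PO_{L,P})}{\overline{U(PO_L)}}$ independently via the $P$-adic Iwasawa logarithm and the definition of $R_{P,L}$---is exactly the route the paper indicates in the sentence preceding the theorem. However, your index bookkeeping is inverted, and the two displayed intermediate values you write down are both wrong (and inconsistent with each other and with your claimed conclusion).

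The error is in the direction of the index $[U(O_L):U(PO_L)]_A$. By definition, $R_{P,L}$ (up to $\mathbb F_q^\times$) generates ${\rm Fitt}_{A_P}\frac{O_{L,P}}{\Log_C(\overline{U(O_L)})}$. Since $U(PO_L)\subset U(O_L)$ is a sublattice of index $[U(O_L):U(PO_L)]_A$, passing to $\Log_C(\overline{U(PO_L)})$ \emph{multiplies} the Fitting ideal generator by that index, giving
\[
{\rm Fitt}_{A_P}\frac{O_{L,P}}{\Log_C(\overline{U(PO_L)})}=[U(O_L):U(PO_L)]_A\,R_{P,L}\,A_P,
\]
and then, as in the last lines of the proof of Theorem~\ref{Theorem2},
\[
{\rm Fitt}_{A_P}\frac{PO_{L,P}}{\Log_C(\overline{U(PO_L)})}=\frac{[U(O_L):U(PO_L)]_A}{[\tfrac{O_L}{PO_L}]_A}\,R_{P,L}\,A_P.
\]
You wrote $[U(O_L):U(PO_L)]_A$ in the denominator instead. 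With the correct expression, substituting into Theorem~\ref{Theorem3} gives
\[
\frac{[U(O_L):U(PO_L)]_A}{[\tfrac{O_L}{PO_L}]_A}\,R_{P,L}
=\frac{[U(O_L):U(PO_L)]_A}{[C(\tfrac{O_L}{PO_L})]_A}\,\frac{\zeta_{P,O_L}(1)}{[H(O_L)]_A},
\]
and the index $[U(O_L):U(PO_L)]_A$ cancels to yield Theorem~\ref{Theorem4}. (Your stated ``target'' value $\frac{[C(O_L/PO_L)]_A^2}{[U(O_L):U(PO_L)]_A\,[O_L/PO_L]_A}R_{P,L}$ is also incorrect for the same reason; the $[C(O_L/PO_L)]_A^2$ should not appear.) Once this sign of the exponent is fixed, your argument is complete and coincides with the paper's.
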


\begin{example} We finish this paragraph by a basic example: $L=K.$  Recall that  we have the following equality in $\mathbb T(K_\infty):$
$$Z_A(1;z)=\Log_{\widetilde{C}}(1).$$
Thus, we get the following equality in $\mathbb T(K_P):$
$$Z_{P,A}(1;z) =(1-\frac{z^d}{P})\Log_{\widetilde{C}}(1),$$
where $d=\deg_\theta P,$ and  $\Log_{\widetilde{C}}(.)$  has to be understood as  the $P$-adic Iwasawa logarithm. We can evaluate at $z=1:$
$$\zeta_{P,A}(1)=(1-\frac{1}{P})\Log_{C}(1),$$ 
where again $\Log_C(.)$  is the $P$-adic Iwasawa logarithm.\par
\end{example}

\subsection{Leopoldt's  defect}
Let $L/K$ be a finite extension and $P$ a prime of $A.$ We define the $P$-adic Leopoldt's defect of $U(O_L)\subset C(O_L)$ as follows:
$$\delta_P(U(O_L)):= {\rm rank}_AU(O_L)-{\rm rank}_{A_P} \overline{U(PO_L)}\in \mathbb N.$$
For example, since $1\in U(A)$ is a torsion point  if and only if $q=2,$ we get :
$$\delta_P(U(A))=0.$$
 V. Bosser, L. Taelman and the author proved that if $L$ is the $P$th  cyclotomic function field then (as a consequence of V. Bosser's appendix in \cite{ANTAE}) :  $\delta_P(U(O_L))=0.$
Note that if $L/K$ is "real" we have :
$$\delta_P(U(O_L))=0\Leftrightarrow R_{P,L} \not =0\Leftrightarrow \zeta_{P,O_L}(1) \not =0.$$
 It would be interesting to examine Leopoldt's Conjecture in this context, i.e. do we always have $\delta_P(U(O_L))=0 ?$ We refer the interested reader  to a forthcoming paper of V. Bosser and the author where the Leopoldt's defect is studied in the more general context of Drinfeld modules.\par

\end{document}